\title[The Schr\"odinger problem]{A survey of the Schr\"odinger problem and some of its  connections with optimal transport}
\author{Christian L\'eonard}
\date{Revised version. March 7, 2013}
\thanks{Author partially supported by the ANR project GeMeCoD. ANR 2011 BS01 007 01}
\newtheorem{theorem}{Theorem}
\newtheorem{lemma}[theorem]{Lemma}
\newtheorem{proposition}[theorem]{Proposition}
\newtheorem{claim}[theorem]{Claim}
\newtheorem{statement}[theorem]{Informal statement}
\newtheorem{problem}[theorem]{Problem}
\newtheorem{definition}[theorem]{Definition}
\newtheorem{assumption}[theorem]{Assumption}
\theoremstyle{remark}
\newtheorem{remark}[theorem]{Remark}
\numberwithin{theorem}{section}
\numberwithin{equation}{section}
\newcommand{\RR}{\mathbb{R}}
\newcommand{\Rn}{\mathbb{R}^n}
\newcommand{\1}{\mathbf{1}}
\newcommand{\ttimes}{\!\times\!}
\newcommand\pf{_{\#}}
\newcommand{\vol}{\mathrm{vol}}
\newcommand{\as}{\textrm{-a.s.}}
\renewcommand{\ae}{\textrm{-a.e.}}
\newcommand{\scal}{\!\cdot\!}
    \DeclareMathOperator{\dom}{dom}
	\DeclareMathOperator{\icor}{icor}
\newcommand{\boulette}[1]{$\bullet$\ Proof of #1.}
\newcommand{\Boulette}[1]{\par\medskip\noindent $\bullet$\ Proof of #1.}
\newcommand\Lim[1]{\lim_{#1\rightarrow\infty}}
\newcommand\Liminf[1]{\liminf_{#1\rightarrow\infty}}
\newcommand\Glim[1]{\Gamma\textrm{-}\lim_{#1\rightarrow\infty}}
\newcommand{\cadlag}{càdlàg}
\newcommand{\ud}{\frac{1}{2}}
\newcommand\XX{\mathcal{X}}
\newcommand\XXX{\XX^2}
\newcommand\PX{\mathrm{P}(\XX)}
\newcommand\PdX{\mathrm{P}_2(\XX)}
\newcommand\PXX{\mathrm{P}(\XXX)}
\newcommand\MX{\mathrm{M}_+(\XX)}
\newcommand\MXX{\mathrm{M}_+(\XXX)}
\newcommand\PY{\mathrm{P}(Y)}
\newcommand\PO{\mathrm{P}(\Omega)}
\newcommand\MO{\mathrm{M}_+(\Omega)}
\newcommand\OO{\Omega}
\newcommand\ii{{[0,1]}}
\newcommand\iX{{[0,1]\times\XX}}
\newcommand\IX{\int_{\XX}}
\newcommand\IXX{\int_{\XXX}}
\newcommand\IO{\int_\Omega}
\newcommand\Iii{\int_\ii}
\newcommand\IiX{\int_\iX}
\newcommand\IiXX{\int_{\ii\times\XXX}}
\newcommand\Ph{\widehat{P}}
\newcommand\ph{\widehat{\pi}}
\newcommand{\Rt}{\widetilde{R}}
\newcommand{\Lf}{\overrightarrow{L}}
\newcommand{\Lb}{\overleftarrow{L}}
\newcommand{\Af}{\overrightarrow{A}}
\newcommand{\Ab}{\overleftarrow{A}}
\begin{document}


 \address{Modal-X. Université Paris Ouest. B\^at.\! G, 200 av. de la République. 92001 Nanterre, France}
 \email{christian.leonard@u-paris10.fr}
 \keywords{Schr\"odinger problem, optimal transport, displacement interpolations, Markov measures, relative entropy, large deviations}
 \subjclass[2010]{46N10,60J25,60F10}

\begin{abstract} 
This article is aimed at presenting the Schr\"odinger problem and some of its  connections with optimal transport. We hope that it can be used as a basic user's guide to Schr\"odinger problem. We also give a survey of the related literature. In addition, some new results are proved.
\end{abstract}

\maketitle 
\tableofcontents


\section{Introduction}

This article is aimed at presenting the Schr\"odinger problem and some of its  connections with optimal transport. We hope that it can be used as a basic user's guide to Schr\"odinger problem. We also give a survey of the related literature. In addition, some new results are proved.

We denote by $\PY$ and $\mathrm{M}_+(Y)$ the sets of all probability and positive measures on a space $Y.$

In 1931, Schr\"odinger \cite{Sch31,Sch32} addressed a problem which is translated in modern terms\footnote{Recall that in the early 30's the axioms of probability theory were not  settled and that the notion of random process still relied entirely on physical intuition.} as follows.
 Let $\XX=\Rn$ or more generally  a  complete connected Riemannian manifold without boundary, $\OO=C(\ii,\XX)$ be the space of all continuous $\XX$-valued paths    on the unit time interval $\ii$ and  denote $R\in\MO$ the law of the reversible Brownian motion on $\XX,$ i.e.\ the Brownian motion with the volume measure as its initial distribution. Remark that $R$ is an unbounded measure on $\OO$ whenever the manifold $\XX$ is not compact. Define the relative entropy of any probability measure $P$ with respect to $R$ by
\begin{equation*}
H(P|R)=\IO \log \left(\frac{dP}{dR}\right) \,dP\in(-\infty,\infty],\qquad P\in\PO
\end{equation*}
if $P$ is absolutely continuous with respect to $R$ and the above integral is meaningful, and $H(P|R)=\infty$ otherwise. A precise definition of the relative entropy with respect to an unbounded measure $R$ is presented at the Appendix.  The \emph{dynamic Schr\"odinger  problem} is
\begin{equation}\label{sdyn}
H(P|R)\to \textrm{min};\qquad P\in\PO: P_0=\mu_0, P_1=\mu_1,
\tag{S$_{\textrm{dyn}}$}
\end{equation}
where $\mu_0, \mu_1\in\PX$ are  prescribed values of the initial and final time marginals $P_0:=P(X_0\in \cdot)$ and $P_1:=P(X_1\in\cdot)$ of $P.$ Here $(X_t)_{0\le t\le1}$ is the canonical process on $\OO.$ 

This is a convex minimization problem since $H(\cdot|R)$ is a convex function and the constraint set $\left\{P\in \PO:P_0=\mu_0,P_1=\mu_1\right\} =\left\{P\in \mathrm{M}(\OO): P\ge0,P_0=\mu_0,P_1=\mu_1\right\} $ is a convex subset of the vector space $\mathrm{M}(\OO)$ of all bounded measures on $\OO.$ Furthermore, as $H(\cdot|R)$ is \emph{strictly} convex, if \eqref{sdyn} admits a solution, it must be unique. Let $\Ph\in\PO$ be this solution (if it exists). We shall see at Proposition \ref{res-03} that it disintegrates as 
\begin{equation}\label{eq-37}
\Ph(\cdot)=\IXX R ^{xy}(\cdot)\,\ph(dxdy)
\end{equation}
where for all $x,y\in\XX,$ $R ^{xy}:=R(\cdot\mid X_0=x,X_1=y)$ is the Brownian bridge from $x$ to $y$ and  $\ph\in \mathrm{P}(\XX\times\XX)$
 is the unique solution to the following \emph{static Schr\"odinger problem}
\begin{equation}\label{s}
H(\pi|R _{01})\to \textrm{min};\qquad \pi\in \PXX:\pi_0=\mu_0,\pi_1=\mu_1.
\tag{S}
\end{equation}
Here,
\begin{equation}\label{eq-40}
R _{01}(dxdy):=R((X_0,X_1)\in dxdy) \propto \exp(-d(x,y)^2/2)\,\textrm{vol} (dx)\textrm{vol} (dy)
\end{equation}
is the joint law of the initial and final positions of the reversible Brownian motion $R$, $d$ is the Riemannian distance  and $\pi_0:=\pi(\cdot\times\XX)$ and $\pi_1:=\pi(\XX\times\cdot)$ are the first and second marginals of $\pi\in\PXX.$

The disintegration formula \eqref{eq-37} means that $\Ph$ shares its bridges with $R,$ that is: $\Ph ^{xy}=R ^{xy}$ for almost all $x,y,$ and that this mixture of bridges is governed by the unique solution $\ph$ to the static Schr\"odinger problem \eqref{s}. It also follows from \eqref{eq-37} that the values of the dynamic and static problems are equal: $\inf \eqref{sdyn}=\inf \eqref{s}.$

The structure of problem \eqref{s} is similar to Monge-Kantorovich problem's one:
\begin{equation}\label{mk}
\IXX c(x,y)\,\pi(dxdy)\to \textrm{min};\qquad \pi\in \PXX:\pi_0=\mu_0,\pi_1=\mu_1
\tag{MK}
\end{equation}
where $c:\XXX\to[0,\infty)$ represents the cost for transporting a unit mass from the initial location $x$ to the final location $y$. Both are convex optimization problems, but unlike \eqref{s}, the  linear program \eqref{mk} might admit infinitely many solutions.  Since \eqref{eq-40} writes as $R _{01}(dxdy)\propto\exp\big(-c(x,y)\big)\,\textrm{vol} (dx)\textrm{vol} (dy)$ with
\begin{equation*}
c(x,y)=d^2(x,y)/2,\quad x,y\in\XX,
\end{equation*}
it happens that the Schr\"odinger problem \eqref{s} is connected to the quadratic Monge-Kantorovich optimal transport problem  \eqref{mk} which is specified by this quadratic cost function. The natural dynamic version of \eqref{mk} is
\begin{equation}\label{mkdyn}
\IO C\,dP\to \textrm{min};\qquad P\in\PO: P_0=\mu_0, P_1=\mu_1
\tag{MK$_{\mathrm{dyn}}$}
\end{equation}
with 
\begin{equation}\label{eq-43}
C(\omega)=\Iii |\dot \omega_t|_{\omega_t}^2/2\,dt\in[0,\infty],\quad \omega\in\OO,
\end{equation}
where we put $C(\omega)=\infty$ when $\omega$ is not absolutely continuous.
\\
Let us comment on the choice of this dynamic version of \eqref{mk}. For all $x,y\in\XX,$ we have 
\begin{equation}\label{eq-46}
c(x,y)=\inf_{}\{C(\omega);\omega\in\OO:\omega_0=x,\omega_1=y\}
\end{equation}
and this infimum is attained at the  constant speed geodesic path $\gamma ^{xy}$ between $x$ and $y,$  which is assumed to be unique for any $(x,y),$ for simplicity. Therefore, the solutions of \eqref{mk} and \eqref{mkdyn} are in one-one correspondence: 
\begin{itemize}
\item
If $\Ph$ solves \eqref{mkdyn}, then $\Ph _{01}:=\Ph((X_0,X_1)\in\cdot)$ solves \eqref{mk};
\item
If $\ph$ solves \eqref{mk}, then the solution of \eqref{mkdyn} is
\begin{equation}\label{eq-42}
\Ph(\cdot)=\IXX \delta _{\gamma ^{xy}}(\cdot)\,\ph(dxdy)
\end{equation}
where $\delta_a$ denotes the Dirac measure at $a$.
\end{itemize}
Furthermore, we have equality of the values of the problems: $\inf\eqref{mk}=\inf\eqref{mkdyn}\in [0,\infty].$

Again, the dynamic Schr\"odinger problem \eqref{sdyn} and the dynamic Monge-Kantorovich problem \eqref{mkdyn} are similar. Comparing their respective solutions \eqref{eq-37} and \eqref{eq-42}, we see that the $\ph$'s solve their respective static problem \eqref{s} and \eqref{mk}, while for each $(x,y)$ the bridge $R ^{xy}\in\PO$ in \eqref{eq-37} plays the role of $\delta _{\gamma ^{xy}}\in\PO$ in \eqref{eq-42}.

All the notions pertaining to the Monge-Kantorovich optimal transport problems \eqref{mk} and \eqref{mkdyn} which are going to be invoked below are discussed in great details in C.~Villani's textbook \cite{Vill09}.

\subsection*{Displacement interpolations in $\PdX$}

Let $\PX$ denote the set of all probability measures on $\XX$ and $\PdX:=\left\{p\in\PX: \IX d^2(x_0,y)\,p(dy)<\infty\right\}.$  If $\mu_0,\mu_1$ are in $\PdX$, then \eqref{mk} and \eqref{mkdyn} admit a solution.
Let $\Ph$ be a solution of \eqref{mkdyn}. We consider 
\begin{equation}\label{eq-39}
\mu_t:=\Ph(X_t\in\cdot)\in\PdX,\quad t\in\ii
\end{equation}
the time-marginal flow of $\Ph$. The $\PdX$-valued path $[\mu_0,\mu_1]=(\mu_t)_{0\le t\le1}$ is called a \emph{displacement interpolation} between $\mu_0$ and $\mu_1.$ These interpolations encode  \emph{geometric} properties of the manifold $\XX:$ although $\PdX$ is not endowed with a Riemannian metric, Otto \cite{JKO98,Otto01} discovered that $[\mu_0,\mu_1]$ is a minimizing constant speed geodesic path on $\PdX,$ in the length space sense.  In particular, \eqref{eq-42} shows that  for each $x,y,$ $[\delta_x,\delta_y]=(\delta _{\gamma ^{xy}_t})_{0\le t\le1}.$ Therefore, displacement interpolations lift the notion of minimizing constant speed geodesic paths from the state space $\XX$ up to $\PdX$.
\\
The common value of the Monge-Kantorovich problems 
$$
W_2^2(\mu_0,\mu_1)/2:=\inf \eqref{mk}=\inf \eqref{mkdyn}
$$ 
allows to define the Wasserstein distance $W_2(\mu_0,\mu_1)$ between $\mu_0$ and $\mu_1.$ 
Saying that $\gamma ^{xy}$ has a constant speed means that for all $0\le s\le t\le 1,$ $d(\gamma ^{xy}_s,\gamma ^{xy}_t)=(t-s)d(x,y).$ 
With \eqref{eq-42}, we see that $[\mu_0,\mu_1]$ inherits this constant speed property: for all $0\le s\le t\le 1,$ $W_2(\mu_s,\mu_t)=(t-s)W_2(\mu_0,\mu_1).$ 
It is a remarkable fact that $W_2(\mu_0,\mu_1)$ also admits the following Benamou-Brenier representation \cite{BB00}:
\begin{equation}\label{eq-38}
 W_2^2(\mu_0,\mu_1)=
	\inf _{(\nu,v)} \left\{\IiX  |v_t(x)|^2_x\, \nu_t(dx)dt\right\} 
	= \IiX  |\nabla \psi_t(x)|^2_x\, \mu_t(dx)dt
\end{equation}
 where the infimum  is taken over all $(\nu,v)$ such that $\nu=(\nu_t)_{0\le t\le 1}\in C(\ii,\PdX),$ $v$ is a smooth vector field and these quantities are linked by the following current equation (in a weak sense) with boundary values:
\begin{equation*}
\left\{\begin{array}{l}
\partial_t \nu+\nabla\scal(\nu\, v)=0,\quad t\in (0,1)\\
\nu_0=\mu_0,\ \nu_1=\mu_1.
\end{array}\right.
\end{equation*}
The last equality in \eqref{eq-38} states that the infimum is attained at $\nu=\mu$: the displacement interpolation \eqref{eq-39}, and  some    gradient vector field $v=\nabla \psi$ which might not be smooth. The optimal couple $(\mu,\nabla\psi)$ solves the forward-backward coupled system
\begin{equation}\label{eq-41}
(a)\
\left\{\begin{array}{ll}
\partial_t \mu+\nabla\scal(\mu\,\nabla \psi)=0,& t\in (0,1]\\
\mu_0,& t=0
\end{array}\right.
\qquad
(b)\
\left\{\begin{array}{ll}
\partial_t \psi+\ud|\nabla \psi|^2=0, & t\in [0,1)\\
\psi_1,& t=1
\end{array}\right.
\end{equation}
for some measurable function $\psi_1$ which is designed for obtaining $\mu_1$ at time 1.
The  potential $\psi$   is the unique viscosity solution of the Hamilton-Jacobi equation \eqref{eq-41}-(b); it admits the Hopf-Lax representation
\begin{equation}
\psi_t(z)=\inf_y
\left\{\frac{d^2(z,y)}{2(1-t)}+\psi_1(y)\right\} ,\quad 0\le t<1, y\in\XX.\end{equation}
In particular, if $\psi_1$ is bounded, $\psi$ is locally Lipschitz continuous and almost everywhere differentiable.

Based on these properties of the displacement interpolations, F.~Otto \cite{JKO98,Otto01} developed an informal theory aimed at considering the metric space $(\PdX,W_2)$ as a Riemannian manifold. This informal approach relies on the idea that, in view of  the current equation \eqref{eq-41}-(a), $\partial_t \mu _{|t=0}=-\nabla\scal(\mu\,\nabla \psi_0)$ is a candidate to be a tangent vector at $\mu_0$. Second order calculus necessitates to  take also \eqref{eq-41}-(b) into account.

The analogue of displacement interpolation exists with \eqref{mkdyn} replaced by \eqref{sdyn}. This \emph{entropic interpolation} also enjoys properties which are similar to \eqref{eq-38} and \eqref{eq-41}. They are discussed below.

\subsection*{The Monge-Kantorovich problem is a limit of Schr\"odinger problems}

 It is well-known that taking $R^k$ to be the reversible Brownian motion with variance $1/k,$ i.e.\ the Markov measure associated to the Markov generator $$L^k=\Delta/(2k)$$ with the volume measure as its initial distribution,   the bridges of $R^k$ converge: for each $(x,y),$ we have
\begin{equation}\label{eq-54}
\Lim k R ^{k,xy}=\delta _{\gamma ^{xy}} \in\PO
\end{equation}
with respect to the usual narrow topology $\sigma(\PO,C_b(\OO))$. This result is an easy consequence of Schilder's theorem which is a large deviation result (as $k$ tends to infinity)  whose rate function is precisely the dynamic cost function $C$ given at  \eqref{eq-43}, see \cite{DZ}.

In fact, the dynamic and static Monge-Kantorovich problems are respectively the $\Gamma$-limits of sequences of dynamic and static Schr\"odinger problems associated to the sequence $(R^k)_{k\ge1}$ in $\MO$ of reference path measures \cite{Mika04,Leo12a}. More precisely (but still informally), considering the sequence of re-normalized Schr\"odinger problems
\begin{equation}\label{skdyn}
H(P|R^k)/k\to \textrm{min}; \qquad P\in\PO: P_0=\mu_0,P_1=\mu_1,
\tag{S$^k_{\mathrm{dyn}}$}
\end{equation}
we have 
\begin{equation}\label{eq-50}
\Glim k \eqref{skdyn}=\eqref{mkdyn}
\end{equation}
 and similarly the re-normalized static version
\begin{equation}\label{sk}
H(\pi|R^k _{01})/k\to \textrm{min}; \qquad \pi\in \PXX: \pi_0=\mu_0,\pi_1=\mu_1,
\tag{S$^k$}
\end{equation}
satisfies  $\Glim k \eqref{sk}=\eqref{mk}$.  Recall that this implies that under some compactness requirements, the values converge: $\Lim k \inf\eqref{skdyn}=\inf \eqref{mkdyn}$ and any limit point of the sequence of minimizers $(\Ph^k)_{k\ge1}$ of \eqref{skdyn} solves \eqref{mkdyn}. A similar statement holds with the static problems.
\\
In particular,  the time-marginal flow
\begin{equation*}
\mu^k_t:=\Ph^k_t,\quad t\in\ii, 
\end{equation*}
of the solution to \eqref{skdyn} converges as $k$ tends to infinity to the displacement interpolation $[\mu_0,\mu_1]$ when \eqref{mk} admits a unique solution, for instance when both $\mu_0$ and $\mu_1$ are absolutely continuous.
Denoting and calling the \emph{entropic interpolation} $[\mu_0,\mu_1]^k:=(\mu^k_t)_{t\in\ii}$ of order $k$, we have
\begin{equation}\label{eq-55}
\Lim k [\mu_0,\mu_1]^k=[\mu_0,\mu_1],
\end{equation}
with respect to  the topology of uniform convergence on $C(\ii,\PdX)$ where $\PdX$ is equipped with $W_2$.

\subsection*{The Schr\"odinger problem is a regular approximation of  the Monge-Kantorovich problem}

Now, we explain informally why in some sense, \eqref{skdyn} is a regularization  of its limiting Monge-Kantorovich problem \eqref{mkdyn}. Unlike \eqref{mkdyn}, for each $k\ge1$, \eqref{skdyn} admits a  \emph{unique} solution $\Ph^k\in\PO$. It can be proved that $\Ph^k$ is a Markov diffusion whose semigroup generator $(A^k_t)_{0\le t\le 1}$ is  of the following form 
\begin{equation*}
A^k_t=\nabla \psi^k_t\scal\nabla + \Delta/(2k),\quad 0\le t< 1,
\end{equation*}
with $\psi^k$ the smooth function on $[0,1)\times\XX$ which is the unique classical solution of the Hamilton-Jacobi-Bellman equation
\begin{equation*}
\left\{\begin{array}{ll}
\partial_t \psi^k +\ud	|\nabla \psi^k|^2+\Delta \psi^k/(2k)=0,& t\in [0,1)\\
\psi^k_1,&t=1
\end{array}\right.
\end{equation*}
for some measurable function $\psi^k_1$ designed for recovering\footnote{In fact, one can recover exactly $\mu_1$ if it has a regular density. Otherwise, one can build a sequence $\mu_1^k$ such that $\Lim k \mu_1^k=\mu_1.$} $\mu^k_1=\mu_1$ as the final distribution of the weak solution to
\begin{equation*}
\left\{\begin{array}{ll}
\IiX (\partial_t+A^k_t)u(t,x)\, \mu_t^k(dx)dt=0,& \forall u\in \mathcal{C}_o^\infty((0,1)\ttimes \XX)\\
\mu^k_0=\mu_0,& t=0.
\end{array}\right.
\end{equation*}
which is the evolution equation of the \emph{entropic interpolation} $[\mu_0,\mu_1]^k$ of order $k$.
\\
Remark that the current equation \eqref{eq-41}-(a):
$\left\{\begin{array}{ll}
\partial_t \mu+\nabla\scal(\mu\,\nabla \psi)=0,& t\in (0,1]\\
\mu_0,& t=0
\end{array}\right.$
 signifies
\begin{equation*}
\left\{\begin{array}{ll}
\IiX (\partial_t+A_t)u(t,x)\, \mu_t(dx)dt=0,& \forall u\in \mathcal{C}_o^\infty((0,1)\ttimes \XX)\\
\mu_0,& t=0
\end{array}\right.
\end{equation*}
with
\begin{equation*}
A_t=\nabla \psi_t\scal\nabla, \quad 0\le t< 1,
\end{equation*}
to be compared with the second order  operator $A^k_t$ above. 
We see that, as a consequence of the smoothing and positivity-improving effects of the Laplace operator, the entropic interpolation of order $k$: $[\mu_0,\mu_1]^k,$ is positive and regular on $(0,1)\times\XX$. This is in contrast with the limiting displacement interpolation $[\mu_0,\mu_1].$

\subsubsection*{Extension of the framework}

We have chosen $R^k$ to be  attached to the Brownian motion, but taking $R^k$ to be any Markov measure on a Polish state space $\XX$ satisfying a large deviation principle with some  rate function  $C$ leads to  limiting  Monge-Kantorovich problems associated to alternate cost functions $C$ and $c$ which are still linked by the contraction formula \eqref{eq-46}. Such extensions based on continuous random paths are considered in \cite{Leo12a}. Extensions where the reference measure $R$ is a random walk on a discrete graph are investigated in \cite{Leo12c}, see also Sections \ref{sec-standard} and \ref{sec-slow} below.

\subsection*{New results} Although this article is mainly a survey, we have obtained some new results.
Theorem \ref{res-09} recollects several   sufficient conditions on the reference path measure $R$ and the prescribed marginal measures $\mu_0$ and $\mu_1$, for the unique solution $\Ph$ of \eqref{Sdyn} to admit the following \emph{product}-shaped Radon-Nikodym derivative
\begin{equation}\label{eq-67}
\Ph=f_0(X_0)g_1(X_1)\,R \in\PO,
\end{equation}
where $f_0$ and $g_1$ are positive  measurable positive functions on $\XX.$ The slight innovation is due to the possibility that $R$ might have an infinite mass, e.g.\ the reversible Brownian motion on $\RR^n$. 
Theorem \ref{res-17} is a significant improvement of Theorem \ref{res-09} in the special important case where $R$ is assumed to be Markov. Under some additional requirement on $R$,  it states that \eqref{eq-67} holds where  $f_0$ and $g_1$ may vanish on some sets.
\\
Proposition \ref{res-16} simply states that, if $R$ is Markov, then the solution $\Ph$ of \eqref{Sdyn} is also Markov. Although this is intuitively clear, the author couldn't find in the literature any proof of this  result. Finally,
 the Benamou-Brenier type formulas that are stated at Propositions \ref{res-13} and \ref{res-14}, are new results.

\subsection*{Outline of the paper}

In Section 2, the dynamic and static Schr\"odinger problems are rigorously stated, their main properties of existence and uniqueness are discussed and the shape of their minimizers is described. This specific shape, given by \eqref{eq-67}, suggests to introduce at Section 3 the notion of $(f,g)$-transform of a Markov measure $R$ which is a time-symmetric version of Doob's $h$-transform. In particular, the classical analogue of Born's formula, which was Schr\"odinger's motivation in \cite{Sch31,Sch32}, is derived at Theorem \ref{res-11}.  Then we illustrate at Section 4 the general results of Sections 2 and 3. First, we revisit the case where $R$ is the reversible  Brownian motion. Then, we consider a discrete setting where the reference measure is a reversible random walk on a graph. At Section 5, we see that slowing the reference Markov process down to a complete absence of motion, is the right asymptotic to consider for recovering optimal transport from minimal entropy. Technically, this is expressed in terms of $\Gamma$-convergence results in the spirit of \eqref{eq-50}. In Section 6, by means of basic large deviation results, we present the motivation for addressing the entropy minimization problem \eqref{sdyn}. This leads us naturally to the \emph{lazy gas experiment}, a starting point to the Lott-Sturm-Villani theory. Literature is discussed at Section 7. 

\subsection*{Acknowledgements}
Many thanks to Jean-Claude~Zambrini for numerous fruitful discussions. The author also wishes to thank Toshio~Mikami and a careful referee for pointing out a gap in the preliminary version of the article.

\section{Schr\"odinger's problem}\label{sec-schpb}

We begin  fixing some  notation and describing the general  framework. Then, Schr\"odinger's problem is stated and its main properties are discussed in a general setting.

\subsection*{Path measures}

Depending on the context, we denote by the same letter the set $\OO=C(\ii,\XX)$ of all continuous paths from the unit time interval $\ii$ to the topological state space $\XX,$ or $\OO=D(\ii,\XX)$ the set of all \cadlag\ (right-continuous and left-limited) paths.  We furnish $\XX$ with its Borel $\sigma$-field and $\OO$ with the canonical $\sigma$-field $\sigma(X_t;0\le t\le1)$ which is generated by the time projections
\begin{equation*}
X_t(\omega):= \omega_t\in\XX,\qquad \omega=(\omega_s)_{0\le s\le1}\in\OO,\ t\in\ii.
\end{equation*}
The mapping $X=(X_t)_{0\le t\le1}:\OO\to\OO$ which is the identity on $\OO,$  is usually called the canonical process. We call a \emph{path measure}, any positive measure $Q\in\MO$ on $\OO.$ Its time-marginals are the push-forward measures
\begin{equation*}
Q_t:=(X_t)\pf Q\in \MX,\quad t\in\ii.
\end{equation*}
This means that for any Borel subset $A\subset \XX,$ $Q_t(A)=Q(X_t\in A).$
If $Q$ describes the  behaviour of the random path $(X_t)_{0\le t\le1}$ of some particle, then $Q_t$ describes the behaviour of the random position $X_t$ of the particle at time $t.$ Remark that the flow $(Q_t)_{0\le t\le 1}\in \MX ^{\ii}$ contains less information than the path measure $Q\in\MO.$ In particular, $(Q_t)_{0\le t\le 1}$ doesn't tell us anything about the correlations between two positions at different times $s$ and $t$ which are encoded in $Q _{st}:=(X_s,X_t)\pf Q\in \MXX.$ We shall be primarily concerned with the endpoint marginal measure
\begin{equation*}
Q _{01}:=(X_0,X_1)\pf Q\in\MXX,
\end{equation*}
meaning that for any Borel subsets $B\subset \XXX,$ $Q _{01}(B)=Q((X_0,X_1)\in B).$
We also denote 
$$Q ^{xy}=Q(\cdot\mid X_0=x,X_1=y)\in\PO,$$ the bridge of $Q$ between $x$ and $y$. For each $Q\in\MO,$ the disintegration formula \eqref{eq-09} with $\phi=(X_0,X_1)$ writes as follows:
\begin{equation*}
Q(\cdot)=\IXX Q ^{xy}(\cdot)\,Q _{01}(dxdy)\in\MO.
\end{equation*}
We assume that the topological state space $\XX$ is a Polish (separable and complete metric) space and equip $\OO=D(\ii,\XX)$ with the corresponding Skorokhod topology. It is well-known \cite{Bil68} that this topology turns $\OO$ into a Polish space and that  the corresponding Borel $\sigma$-field is precisely the canonical one: $\sigma(X_t;0\le t\le1).$ Moreover, in restriction to $C(\ii,\XX),$ the Skorokod topology is the topology of uniform convergence which also turns $C(\ii,\XX)$ into a Polish space. We still have the  coincidence of the Borel $\sigma$-field and the  canonical one. 
\\
The path space $\OO$ is furnished with this topology.

\subsection*{Why unbounded path measures}

One may wonder why a random behaviour should be described by an unbounded measure rather than a probability measure. We have in mind as a particular but important application, the \emph{reversible Brownian motion} on $\XX=\Rn.$ It is the Brownian motion whose forward dynamics is driven by the heat semigroup as usual, but its random initial position $X_0$  is uniformly distributed on $\Rn.$  Denoting $R\in\MO$ the corresponding path measure on $\OO=C(\ii,\Rn)$, $R_0(dx)=dx$ is the Lebesgue measure\footnote{Although this paper is not concerned with the interpretation of such a description, one should note that a ``frequencist" interpretation fails unless one introduces an infinite system of independent particles initially distributed according to a Poisson point process with a uniform spatial frequency. An alternate information viewpoint is also relevant: the Lebesgue measure (or any of its positive multiples) is the less informative a priori measure for modelling our complete lack of knowledge about the initial position. Indeed, it is invariant under isometries and translations, and the entropic problems to be considered below are  insensitive to homotheties (up to an additive constant).} on $\Rn$  and $R(\cdot)=\int _{\XX} \mathcal{W}_x(\cdot)\,dx$ where $\mathcal{W}_x$ is the Wiener probability measure with initial marginal $\delta_x$. Clearly, $R$ has the same infinite mass as $R_0.$ 
\\
Similarly, the simple random walk on a countably infinite  graph $\XX$ admits an unbounded reversing measure $R_0$ so that the corresponding reversible simple random walk is described by an unbounded measure $R\in \MO$ with $\OO=D(\ii,\XX).$
\\
Considering such reversible path measures $R\in\MO$ as reference measures usually simplifies  computations.

\subsection*{Relative entropy}

Let $r$ be some $\sigma$-finite positive measure on some  space $Y$. The relative entropy of the probability measure $p$ with respect to $r$ is loosely defined by
\begin{equation}\label{eq-01}
H(p|r):=\int_Y \log(dp/dr)\, dp\in (-\infty,\infty],\qquad p\in \PY
\end{equation}
if $p\ll r$ and $H(p|r)=\infty$ otherwise. 
The rigorous definition of the relative entropy and its basic properties are recalled at the appendix section \ref{sec-entropy}.

\subsection*{Statement of Schr\"odinger's problem}
The main data is a given reference path measure  $R\in\MO.$ In this section any (non-zero) $\sigma$-finite path measure in $\MO$ can serve as a reference measure.
\\
We first state a dynamic version \eqref{Sdyn} of  Schr\"odinger's problem  which is associated   to   $R.$ Then, we define Schr\"odinger's problem \eqref{S} as a static projection of \eqref{Sdyn} and the connections between the solutions of \eqref{S} and \eqref{Sdyn} are described at Proposition \ref{res-03}.

\begin{definition}[Dynamic Schr\"odinger's problem]
The dynamic Schr\"odinger problem associated with the \emph{reference path measure} $R\in\MO$ is the following entropy minimization problem
\begin{equation*}\label{Sdyn}
H(P| R)\to \emph{min};\qquad P\in\PO: P_0=\mu_0,\ P_1=\mu_1
 \tag{S$_{\mathrm{dyn}}$}
\end{equation*}
where $\mu_0, \mu_1\in\PX$ are prescribed initial and final marginals.
\end{definition}
Considering the projection $R _{01}=(X_0,X_1)\pf R\in\MXX$ of $R$ on the product space $\XXX$ as a reference measure, leads us to  Schr\"odinger's  (static) problem.

\begin{definition}[Schr\"odinger's problem]
The (static) Schr\"odinger problem associated with the \emph{reference  measure} $R _{01}\in\MXX$ is the following entropy minimization problem
\begin{equation*}\label{S}
H(\pi| R _{01})\to \emph{min};\qquad \pi\in\PXX: \pi_0=\mu_0,\ \pi_1=\mu_1
 \tag{S}
\end{equation*}
where $\pi_0:=\pi(\cdot\times\XX)$ and $\pi_1:=\pi(\XX\times\cdot)\in\PX$ denote respectively the first and second marginals of $\pi\in\PXX$ and
 $\mu_0, \mu_1\in\PX$ are prescribed marginals.
\end{definition}
These optimization problems are highly connected. This is the content of next proposition. 

\begin{proposition}[F\"ollmer, \cite{Foe85}]\label{res-03}
The Schr\"odinger problems \eqref{Sdyn} and \eqref{S} admit respectively at most one solution $\Ph\in\PO$ and $\ph\in\PXX.$
\\
 If \eqref{Sdyn} admits the solution $\Ph,$ then $\ph=\Ph _{01}$ is the solution of \eqref{S}.
\\
Conversely, if $\ph$ solves \eqref{S}, then  \eqref{Sdyn} admits the solution 
\begin{equation}\label{eq-04}
\Ph(\cdot)=\IXX R ^{xy}(\cdot)\,\ph(dxdy)\in\PO
\end{equation}
which means that $$\Ph _{01}=\ph\in\PXX$$ and that   $\Ph$ shares its bridges with $R:$ 
$$\Ph ^{xy}=R ^{xy},\qquad\forall (x,y)\ \ph\ae$$
\end{proposition}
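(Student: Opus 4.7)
The proof rests on a single workhorse: the additive disintegration formula for relative entropy. For any reference path measure $R\in\MO$ with $R_{01}$ $\sigma$-finite, and any $P\in\PO$ with $P_{01}\ll R_{01}$, I would first establish
\begin{equation*}
H(P|R)=H(P_{01}|R_{01})+\IXX H(P^{xy}|R^{xy})\, P_{01}(dxdy),
\end{equation*}
where the second term is a well-defined $[0,+\infty]$-valued integral because each inner entropy is nonnegative (the bridges $P^{xy}$ and $R^{xy}$ are probability measures). This is just the chain rule $dP/dR=(dP_{01}/dR_{01})\circ(X_0,X_1)\cdot dP^{X_0X_1}/dR^{X_0X_1}$ combined with the disintegration $P(\cdot)=\IXX P^{xy}(\cdot)\,P_{01}(dxdy)$, and is exactly the identity that makes the whole statement work. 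I would refer to the appendix for the rigorous definition of $H(\cdot|R)$ when $R$ is unbounded, and check that $R_{01}$ being $\sigma$-finite suffices to construct the regular conditional probabilities $R^{xy}$.

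\textbf{Uniqueness.} Both constraint sets $\{P\in\PO:P_0=\mu_0,P_1=\mu_1\}$ and $\{\pi\in\PXX:\pi_0=\mu_0,\pi_1=\mu_1\}$ are convex and $H(\cdot|R)$, $H(\cdot|R_{01})$ are strictly convex on their domains (this is a standard consequence of the strict convexity of $u\mapsto u\log u$). Hence each problem has at most one minimizer.

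\textbf{Equality of values.} From the disintegration formula, for any admissible $P$ with $P_{01}=\pi$,
\begin{equation*}
H(P|R)\ge H(\pi|R_{01}),
\end{equation*}
so $\inf\eqref{Sdyn}\ge\inf\eqref{S}$. Conversely, given any admissible $\pi$ for \eqref{S}, set $P_\pi(\cdot):=\IXX R^{xy}(\cdot)\,\pi(dxdy)$. Then $(P_\pi)_{01}=\pi$ (the endpoints of $R^{xy}$ are $(x,y)$ by construction), so $P_\pi$ is admissible for \eqref{Sdyn}, and its bridges are $(P_\pi)^{xy}=R^{xy}$, making the bridge integral in the disintegration formula vanish: $H(P_\pi|R)=H(\pi|R_{01})$. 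Hence $\inf\eqref{Sdyn}\le\inf\eqref{S}$, giving equality.

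\textbf{Equivalence of minimizers.} If $\Ph$ solves \eqref{Sdyn}, apply the disintegration formula with $P=\Ph$, $\pi=\Ph_{01}$. Since $H(\Ph|R)=\inf\eqref{Sdyn}=\inf\eqref{S}\le H(\Ph_{01}|R_{01})$ and the bridge integral is $\ge 0$, both inequalities must be equalities: $\Ph_{01}$ attains $\inf\eqref{S}$ and $\IXX H(\Ph^{xy}|R^{xy})\,\Ph_{01}(dxdy)=0$, which forces $\Ph^{xy}=R^{xy}$ for $\Ph_{01}$-a.e.\ $(x,y)$, yielding \eqref{eq-04}. Conversely, if $\ph$ solves \eqref{S}, then $P_{\ph}=\IXX R^{xy}(\cdot)\,\ph(dxdy)$ is admissible for \eqref{Sdyn} with $H(P_{\ph}|R)=H(\ph|R_{01})=\inf\eqref{S}=\inf\eqref{Sdyn}$, so it minimizes.

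\textbf{Main obstacle.} The only subtlety is the additive disintegration identity for $H(P|R)$ when $R$ is merely $\sigma$-finite (e.g., the reversible Brownian motion has infinite mass). The standard chain-rule proof for probability measures has to be adapted via the appendix's construction of the relative entropy with respect to an unbounded reference measure; once that is in place, every other step is routine convex-analytic or measure-theoretic bookkeeping.
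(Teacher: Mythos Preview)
Your proof is correct and follows essentially the same route as the paper: strict convexity for uniqueness, then the additive disintegration identity $H(P|R)=H(P_{01}|R_{01})+\IXX H(P^{xy}|R^{xy})\,P_{01}(dxdy)$ (the paper's formula \eqref{eq-10} with $\phi=(X_0,X_1)$) to link the dynamic and static problems. The paper's proof is terser---it jumps directly from the disintegration identity and the equality case \eqref{eq-08}--\eqref{eq-11} to the conclusion---while you spell out the two-sided value comparison and both implications explicitly, but the substance is identical.
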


\begin{proof}
Being  \emph{strictly} convex problems, \eqref{Sdyn} and \eqref{S} admit respectively at most one solution.
\\
Let us  particularize the consequences of the additive  property  formula \eqref{eq-10} to $r=R,$ $p=P$ and $\phi=(X_0,X_1).$  We have for all $P\in\PO,$
\begin{equation*}
H(P|R)=H(P _{01}|R _{01})+\IXX H(P ^{xy}|R ^{xy})\,P _{01}(dxdy)
\end{equation*}
which implies that
$
H(P _{01}|R _{01})\le H(P|R)
$
 with equality (when $H(P|R)<\infty$) if and only if 
$
P ^{xy}=R ^{xy}
$
for $P _{01}$-almost every $(x,y)\in\XXX,$ see \eqref{eq-08} and \eqref{eq-11}.
Note that this additive  property formula  is available since both $\XXX$ and $\OO$ are Polish spaces. Therefore $\Ph$ is the (unique) solution of \eqref{Sdyn} if and only if it disintegrates as \eqref{eq-04}.
\end{proof}

\subsection*{Existence results}
We present below at Proposition \ref{res-04} a simple criterion for \eqref{S} and \eqref{Sdyn} to have a solution. We first need a preliminary result.

\begin{lemma}\label{res-01}
We have: \quad
$\inf \eqref{Sdyn}=\inf \eqref{S}\in (-\infty,\infty].$
\\
Let $B:\XX\to[0,\infty)$ be a measurable function such that 
\begin{equation}\label{eq-61}
\IXX e ^{-B(x)-B(y)}R _{01}(dxdy)<\infty
\end{equation}
and take $\mu_0,\mu_1\in\PX$ such that 
\begin{equation}\label{eq-62}
\IX B\,d \mu_0,\ \IX B\,d \mu_1<\infty.
\end{equation}
The static and dynamic Schr\"odinger problems \eqref{S} and \eqref{Sdyn} admit a (unique) solution if and only if \ $\inf \eqref{Sdyn}=\inf \eqref{S}<\infty$ or equivalently if and only if the prescribed marginals $\mu_0$ and $\mu_1$ are such that
\begin{equation}\label{eq-05}
\textrm{there exists some } \pi^o\in\PXX \textrm{ such that }  \pi_0^o=\mu_0, \pi^o_1=\mu_1 \textrm{ and } H(\pi^o|R _{01})<\infty.
\end{equation}
\end{lemma}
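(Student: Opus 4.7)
The plan is to split the statement into three parts: the equality $\inf\eqref{Sdyn}=\inf\eqref{S}$, the lower bound $>-\infty$, and existence and uniqueness of a minimizer under \eqref{eq-05}.

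For the equality of infima, I would invoke the additive disintegration identity \eqref{eq-10} applied to the projection $(X_0,X_1):\OO\to\XXX$. This yields, for every $P\in\PO$,
$$H(P|R)=H(P_{01}|R_{01})+\IXX H(P^{xy}|R^{xy})\,P_{01}(dxdy)\ge H(P_{01}|R_{01}).$$
Since a feasible $P$ for \eqref{Sdyn} has $P_{01}$ feasible for \eqref{S}, this gives $\inf\eqref{S}\le\inf\eqref{Sdyn}$. Conversely, for any feasible $\pi$ for \eqref{S}, the bridge mixture $P:=\IXX R^{xy}\,\pi(dxdy)$ has $P_{01}=\pi$ and $P^{xy}=R^{xy}$ $\pi$\ae, so the same identity collapses to $H(P|R)=H(\pi|R_{01})$, giving the reverse inequality.

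The crux is showing that the common infimum is $>-\infty$, and this is where \eqref{eq-61}--\eqref{eq-62} enter; because $R_{01}$ may be unbounded, the Gibbs inequality $H\ge0$ is not directly available. I would tilt the reference: set $\tilde R_{01}:=z^{-1}e^{-B(x)-B(y)}R_{01}$ with $z:=\IXX e^{-B(x)-B(y)}R_{01}(dxdy)\in(0,\infty)$ thanks to \eqref{eq-61}, so that $\tilde R_{01}$ is a probability. Because the tilting density separates additively between the two coordinates, a direct computation shows that on the feasible set $\{\pi\in\PXX:\pi_0=\mu_0,\pi_1=\mu_1\}$ one has
$$H(\pi|R_{01})=H(\pi|\tilde R_{01})-\log z-\IX B\,d\mu_0-\IX B\,d\mu_1,$$
where the last two terms are finite by \eqref{eq-62} and depend only on the data. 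Nonnegativity of $H(\cdot|\tilde R_{01})$ then furnishes a uniform lower bound on the feasible set.

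For existence, assuming \eqref{eq-05} (which, given the equality of infima, is the same as $\inf\eqref{Sdyn}=\inf\eqref{S}<\infty$), I would take a minimizing sequence $(\pi_n)\subset\PXX$. All $\pi_n$ share the fixed marginals $\mu_0,\mu_1$, so $(\pi_n)$ is tight by Prokhorov; a subsequence converges narrowly to some $\pi^\star$, which keeps the correct marginals by continuity of marginal projection. The displayed constant shift above reduces narrow lower semicontinuity of $H(\cdot|R_{01})$ on the feasible set to that of $H(\cdot|\tilde R_{01})$, which is classical for a probability reference, so $\pi^\star$ minimizes \eqref{S}. Strict convexity of the entropy then yields uniqueness, and Proposition \ref{res-03} transfers existence and uniqueness to the dynamic problem via $\Ph=\IXX R^{xy}\,\ph(dxdy)$. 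The main obstacle is precisely the unboundedness of $R_{01}$; the tilting by $B$ is the device that reduces the whole analysis to the well-understood probability-reference setting.
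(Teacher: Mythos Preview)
Your proposal is correct and follows essentially the same route as the paper: the equality of infima via the additive disintegration formula (this is exactly what the paper extracts from the proof of Proposition~\ref{res-03}), the tilting of $R_{01}$ by $e^{-B\oplus B}$ to a probability reference to obtain the constant-shift identity \eqref{eq-63} (giving both the lower bound and lower semicontinuity on the feasible set), Prokhorov tightness of $\Pi(\mu_0,\mu_1)$ from the fixed marginals, and the transfer to the dynamic problem via Proposition~\ref{res-03}. The only cosmetic difference is that the paper phrases existence as ``lsc function on a compact set attains its minimum'' rather than via an explicit minimizing sequence, but this is the same direct-method argument.
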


\begin{proof}
The first identity comes from the proof of Proposition \ref{res-03}.
\\
Since $\XX$ is Polish, the probability measures $\mu_0$ and $\mu_1$ are tight measures on $\XX$ and it follows with the Prokhorov criterion on $\XXX$ that the closed constraint set $\Pi(\mu_0,\mu_1):=\left\{\pi\in\PXX: \pi_0=\mu_0,\pi_1=\mu_1\right\}$ is uniformly tight and therefore compact in $\PXX.$ 
\\
Taking \eqref{eq-61} into account,  \eqref{eq-02} and \eqref{eq-03} give us
$	
H(\pi|R _{01})=H(\pi|R _{01}^B)-\IXX W d \pi-z_B,$ $\pi\in\PXX
$	
with $W(x,y)=B(x)+B(y),$ $x,y\in\XX$, $z_B:=\IXX e ^{-B(x)-B(y)}R _{01}(dxdy)<\infty$ and $R _{01}^B:=z_B ^{-1}e ^{-B\oplus B}R _{01}\in\PXX.$ In restriction to $\Pi(\mu_0,\mu_1),$ we obtain
\begin{equation}\label{eq-63}
H(\pi|R _{01})=H(\pi|R _{01}^B)-\IX B\, d \mu_0-\IX B\,d \mu_1-z_B,\quad \pi\in\Pi(\mu_0,\mu_1).
\end{equation}
Together with \eqref{eq-62},  this implies that $H(\cdot|R _{01})$ is lower bounded and lower semi-continuous on the compact set $\Pi(\mu_0,\mu_1).$ Hence,  \eqref{S} admits a solution if and only 
$
\inf \eqref{S}<\infty.
$
We already remarked that \eqref{Sdyn} has a solution  if and only \eqref{s} has a solution  that is: $\inf \eqref{S}<\infty$, or equivalently if and only \eqref{eq-05} is satisfied.   
\end{proof}

\begin{proposition}\label{res-04}
Suppose that $R_0=R_1=m\in\MX$ (this is satisfied in particular when $R$ is reversible with $m$ as its reversing measure).
\begin{enumerate}[(a)]
\item
For \eqref{Sdyn} and \eqref{S} to have a solution, it is necessary that $H(\mu_0|m),$ $H(\mu_1|m)<\infty.$
\item
Let us give a set of sufficient conditions.
Suppose that there exist some  nonnegative measurable functions $A$ and $B$ on $\XX$ such that 
\begin{enumerate}
\item[(i)]
$R _{01}(dxdy)\ge e ^{-A(x)-A(y)}\, m(dx)m(dy)$;
\item[(ii)]
$\IXX e ^{-B(x)-B(y)}R _{01}(dxdy)<\infty$
\end{enumerate}
Let $\mu_0$ and $\mu_1$ satisfy
\begin{enumerate}
\item[(iii)]
$\IX (A+B)\,d \mu_0, \IX (A+B)\, d \mu_1<\infty$
\item[(iv)]
$H(\mu_0|m), H(\mu_1|m)<\infty$
\end{enumerate}
Then,  \eqref{Sdyn} and \eqref{S}  admit a unique solution.
\item
If we have (iv) and 
\begin{enumerate}
\item[(v)]
$\IX e ^{\alpha (A+B)}\,dm<\infty$ for some $\alpha>0,$
\end{enumerate}
then (iii) is satisfied.
\end{enumerate}
\end{proposition}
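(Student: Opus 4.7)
For part (a), the plan is to use the additivity identity \eqref{eq-10} invoked in the proof of Proposition~\ref{res-03}. By Proposition~\ref{res-03} it is enough to assume that \eqref{S} admits a solution $\ph$, so $H(\ph|R _{01})<\infty$, and disintegrating along the first-coordinate projection $(x,y)\mapsto x$ gives
\[
H(\ph|R _{01})=H(\mu_0|m)+\IX H(\ph(\cdot|x)|R _{01}(\cdot|x))\,\mu_0(dx)\ \ge\ H(\mu_0|m),
\]
since $(R _{01})_0=R_0=m$. Projecting on the second coordinate yields $H(\mu_1|m)<\infty$ in the same way.

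For part (b), I would invoke Lemma~\ref{res-01}. Hypothesis \eqref{eq-61} is precisely (ii), while \eqref{eq-62} follows from (iii) because $0\le B\le A+B$. The lemma then reduces existence and uniqueness to producing a \emph{single} admissible coupling $\pi^o\in\PXX$ with $H(\pi^o|R _{01})<\infty$. The natural candidate is the product coupling $\pi^o:=\mu_0\otimes\mu_1$. Condition (iv) gives $\mu_i\ll m$ with densities $f_i:=d\mu_i/dm$, and since the dominating measure in (i) has the everywhere strictly positive density $e^{-A(x)-A(y)}$, condition (i) yields $m\otimes m\ll R _{01}$ with $d(m\otimes m)/dR _{01}\le e ^{A(x)+A(y)}$. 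Combining,
\[
\frac{d\pi^o}{dR _{01}}(x,y)\ \le\ e ^{A(x)+A(y)}\,f_0(x)f_1(y).
\]
Taking logs and controlling the positive part gives
\[
\bigl(\log\tfrac{d\pi^o}{dR _{01}}\bigr)^+\ \le\ A(x)+A(y)+(\log f_0(x))^++(\log f_1(y))^+,
\]
whose $\pi^o$-integral is bounded by $\int A\,d\mu_0+\int A\,d\mu_1+H(\mu_0|m)+H(\mu_1|m)$, finite by (iii) (using $A\le A+B$) and (iv); the usual bound $(t\log t)^-\le 1/e$ turns $H(\mu_i|m)<\infty$ into $\int(\log f_i)^+d\mu_i<\infty$. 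Thus $H(\pi^o|R _{01})<\infty$, and Lemma~\ref{res-01} delivers the unique solution.

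For part (c), the plan is the Fenchel--Young / Gibbs variational inequality: for any $\alpha>0$ and any nonnegative measurable $\phi:\XX\to[0,\infty)$, whenever $\mu\in\PX$,
\[
\alpha\IX\phi\,d\mu\ \le\ H(\mu|m)+\log\IX e ^{\alpha\phi}\,dm.
\]
Applied with $\phi=A+B$ and $\mu=\mu_i$ for $i=0,1$: (iv) controls the entropy term and (v) controls the log-exponential term, so $\int(A+B)\,d\mu_i<\infty$, i.e.\ (iii). The only mildly delicate point in the whole argument is the step in (b) that converts the measure-theoretic lower bound (i) into an \emph{upper} bound on the relative entropy of $\pi^o$; the strict positivity of $e^{-A\oplus A}$ (ensured by the assumed finiteness of $A$) is exactly what allows one to pass from (i) to $m\otimes m\ll R _{01}$ and to a pointwise control of $d(m\otimes m)/dR _{01}$.
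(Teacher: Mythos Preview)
Your argument is correct and mirrors the paper's own proof: (a) via the contraction inequality \eqref{eq-08} (which you recover from the additive formula \eqref{eq-10}), (b) by testing \eqref{eq-05} with the product coupling $\pi^o=\mu_0\otimes\mu_1$ and invoking Lemma~\ref{res-01}, and (c) via the variational representation \eqref{eq-07}, which is exactly your Fenchel--Young/Gibbs inequality.

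One small imprecision in part (b): the bound $(t\log t)^-\le 1/e$ gives $\int(\log f_i)^+\,d\mu_i<\infty$ from $H(\mu_i|m)<\infty$ only when $m$ has finite mass. In the unbounded case the paper cares about (e.g.\ $m=\mathrm{Leb}$), you should first tilt to the probability measure $\tilde m\propto e^{-(A+B)}m$; note that (i) and (ii) together yield $\bigl(\int e^{-(A+B)}\,dm\bigr)^2\le\int e^{-B\oplus B}\,dR_{01}<\infty$, and (iii) gives $\int(A+B)\,d\mu_i<\infty$, so $H(\mu_i|\tilde m)<\infty$ and your $(t\log t)^-$ bound applies to $\tilde f_i:=d\mu_i/d\tilde m$, from which the control on $(\log f_i)^+$ follows. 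This is a routine patch and does not affect the structure of your proof.
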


Remark that for (v) to be satisfied, it is necessary that $m$ is a bounded measure.

\begin{proof} Statement (a)  follows directly from Lemma \ref{res-01} and  $H(\mu_0|m),H(\mu_1|m)\le H(\pi^o|R _{01})<\infty,$ see \eqref{eq-08}. 
\\
Let us look at statement (b).
Testing \eqref{eq-05} with  $\pi^o=\mu_0\otimes \mu_1,$ one easily observes that  when (i) and (iii) are satisfied,  it suffices that $H(\mu_0|m), H(\mu_1|m)<\infty$ for \eqref{eq-05} to hold true, and consequently by Lemma \ref{res-01}, for \eqref{Sdyn} and \eqref{S} to have a (unique) solution.
\\ 
Let us look at statement (c). With the variational representation formula \eqref{eq-07}, one sees that (iv) and (v) imply (iii).
\end{proof}

\subsection*{The dual problem}

Take a measurable function $B:\XX\to [1,\infty)$ that satisfies \eqref{eq-61}.  Define $C_B(\XX)$ to be the space of all continuous functions $u:\XX\to\RR$ such that $\sup|u|/B<\infty$ and $\mathrm{P}_B(\XX):=\left\{\mu\in\PX; \IX B\,d \mu<\infty\right\} .$
Based on the variational representation of the relative entropy \eqref{eq-07b} and on the observation that for each $\pi\in\PXX$, $(\pi_0,\pi_1)=(\mu_0,\mu_1)$ if and only if $\IXX [\varphi(x)+\psi(y)]\,\pi(dxdy)=\IX \varphi\,d \mu_0+\IX \psi\, d \mu_1,$ for all $\varphi, \psi\in C_b(\XX)$ (the space of all bounded continuous numerical functions on $\XX$), it can be proved  that a dual problem to the Schr\"odinger problem \eqref{S} is
\begin{equation}\label{D}
\IX \varphi\,d\mu_0+\IX \psi\, d\mu_1-\log\IXX e ^{\varphi\oplus\psi}\,dR _{01}\to \textrm{max};\qquad \varphi, \psi\in C_B(\XX)
\tag{D}
\end{equation}
where $\varphi\oplus \psi: (x,y)\in\XXX\mapsto \varphi(x)+\psi(y)\in\RR$ and it is assumed that the prescribed marginals satisfy $\mu_0,\mu_1\in\mathrm{P}_B(\XX).$ 
\\
In particular, the dual equality $\inf \eqref{S}=\sup \eqref{D}\in (-\infty,\infty]$ is satisfied.  This is proved, for instance, in \cite{Leo01c} when the reference measure is a probability measure. In the general case, take \eqref{eq-63} into account to get back to a  reference measure with a finite mass. Of course, there is no reason for the dual attainment to hold in general in a space of regular functions such as $C_B(\XX)^2.$ Suppose however that $\mu_0$ and $\mu_1$ are such that \eqref{D} is attained at $(\hat\varphi,\hat\psi).$ Then, the dual equality: $\IXX \hat\varphi\oplus \hat\psi\, d\ph-\log\IXX e ^{\hat\varphi\oplus\hat\psi}\,dR _{01}=H(\ph|R _{01})$ and the case of equality in \eqref{eq-07b} lead us to
\begin{equation}\label{eq-25}
d\ph/dR _{01}=\exp(\hat\varphi\oplus \hat\psi),
\end{equation}
at least when $d\ph/dR _{01}>0$.
The shape of the minimizer $\ph$ of \eqref{S} will be discussed further in next subsection.
\\
Similarly, a dual problem to the dynamic version \eqref{Sdyn} of \eqref{S} is
\begin{equation}\label{Ddyn}
\IX \varphi\,d\mu_0+\IX \psi\, d\mu_1-\log\IO e ^{\varphi(X_0)+\psi(X_1)}\,dR \to \textrm{max};\qquad \varphi, \psi\in C_B(\XX)
\tag{D$_{\textrm{dyn}}$}
\end{equation}
We observe that \eqref{D}=\eqref{Ddyn}.

\subsection*{Some properties of the minimizer $\ph$  of \eqref{S}}

 We give some details about the  structure of the unique minimizer $\ph$ of \eqref{S} which is assumed to exist; for instance under the hypotheses of Proposition \ref{res-04}. 

It is proved in \cite[Thm.\,5.1 \& (5.9)] {Leo01b}\footnote{
The assumptions of \cite{Leo01b} require that $R _{01}$ is a probability measure. In the general case where $R _{01}$ is unbounded, use \eqref{eq-63} to go back to the unit mass setting.} that there exist two functions $\varphi, \psi:\XX\to\RR$ such that
\begin{enumerate}
\item[(i)]
$d\ph/d R _{01}=\exp(\varphi\oplus \psi), \ \ph\ae$
\item[(ii)]
 $\varphi\oplus \psi: \XXX\to\RR$ is $R _{01}$-measurable.
\end{enumerate}
It is  tempting to write that $\ph$ has the following shape
\begin{equation*}
\ph(dxdy)=f(x)g(y)\, R _{01}(dxdy)
\end{equation*}
where $f=e ^{\varphi}$ and $g=e ^\psi$ are such that the marginal constraints
\begin{equation}\label{eq-12}
\left\{\begin{array}{lcll}
f(x) E_R[g(X_1)\mid X_0=x]&=&d \mu_0/d R_0 (x),&\  R_0\ae\\
g(y) E_R[f(X_0)\mid X_1=y]&=&d \mu_1/dR_1 (y),&\  R_1\ae
\end{array}\right.
\end{equation}
are satisfied. This was already suggested by \eqref{eq-25}. But this is not allowed in the general case. Indeed, two obstacles have to be avoided.
Some comments are necessary. 

\subsubsection*{Obstacle (i)}
Firstly, the identity (i) is only valid $\ph$-almost everywhere and it might happen that it doesn't hold true $R _{01}$-almost everywhere. Otherwise stated, there exists some measurable subset $S \subset \XXX$ such that:
\begin{enumerate}
\item[(i)']
$\ph=\1 _{S}\exp(\varphi\oplus \psi)\, R _{01}$
\end{enumerate}
and it is not true in general that the set $S\subset\XXX$ is the product $S=S_0\times S_1$ of two measurable subsets of $\XX,$ see \cite[\S 2]{FG97} or \cite[\S 5]{Leo01b}.

\subsubsection*{Obstacle (ii)}
Secondly, statement (ii) does not imply that $\varphi$ and $\psi$ are respectively $R_0$ and $R_1$-measurable  on $\XX$. Only the tensor sum $\varphi\oplus \psi$ is $R _{01}$-measurable on the product space $\XXX.$ Hence, one is not allowed to consider the conditional expectations in \eqref{eq-12}.

\subsubsection*{Avoiding obstacle (i)}

To  avoid the obstacle (i), it is enough to slightly modify the prescribed marginals $\mu_0$ and $\mu_1$ as follows. It is shown in \cite{Leo01b} that  (i) is satisfied $R _{01}\ae$ (rather than $\ph\ae$) if $(\mu_0,\mu_1)$ is in the intrinsic core: $\icor \mathcal{C},$  of the set of all admissible constraints $$\mathcal{C}:=\{(\mu_0,\mu_1)\in\PX^2; \inf\eqref{S}_{(\mu_0,\mu_1)}<\infty\}.$$ Recall that for any convex set $C$, its intrinsic core is defined as\\ $\icor C:=\left\{y\in C; \exists x,z\in C: y\in ]x,z[\right\}$ where $]x,z[:=\left\{(1-t)x+tz; 0<t<1\right\}\subset C.$ It is also shown in \cite{Leo01b} that $\mathcal{C}=\{ \Lambda^*<\infty\}$ where $\Lambda^*$ is the convex conjugate of the extended real valued function $\Lambda$ which is defined for any  measurable functions $\varphi,\psi$  by $\Lambda(\varphi,\psi)=\log \IXX e ^{\varphi\oplus \psi}\,dR _{01}\in(-\infty,\infty].$ Therefore $\mathcal{C}$  is a convex subset of $\PX ^2.$
 In particular, considering 
\begin{equation}\label{eq-60}
\left\{\begin{array}{lcl}
\mu_0^\epsilon&:=&(1-\epsilon)\mu_0+\epsilon R_0^w\\
\mu_1^\epsilon&:=&(1-\epsilon)\mu_1+\epsilon R_1^w
\end{array}\right.
\end{equation}
with  $R_0^w, R_1^w\in\PX$ the marginals of $R^w _{01}=z_w ^{-1}e ^{-w}\,R _{01}\in\PXX$  where the function $w$ is  chosen\footnote{In the important special case where $R$ is a probability measure, just take $w=0$ in order that $R^w _{01}=R _{01}.$} such that $\IXX w e ^{-w}\,dR _{01}<\infty$ for $H(R^w _{01}|R _{01})<\infty$ to be satisfied with $\IXX w\, dR^w _{01}<\infty,$ see \eqref{eq-03}, we observe that for any admissible $(\mu_0,\mu_1)\in \mathcal{C},$ for any  arbitrarily small $\epsilon>0$, $(\mu_0^\epsilon,\mu_1^\epsilon)\in\icor \mathcal{C}$.  Therefore, $(\mu_0^\epsilon,\mu_1^\epsilon)$ is arbitrarily close  to $(\mu_0,\mu_1)$ in total variation norm and the corresponding solution $\ph ^\epsilon$ of $\eqref{S}_{(\mu_0^\epsilon,\mu_1^\epsilon)}$ satisfies
\begin{equation*}
\ph^\epsilon=\exp (\varphi^\epsilon\oplus \psi^\epsilon)\, R _{01}
\end{equation*}
for some functions $\varphi^\epsilon$ and $\psi^\epsilon$ such that $\varphi^\epsilon\oplus \psi^\epsilon$ is jointly $R _{01}$-measurable.

\begin{proposition}\label{res-05}
We say that the constraint $(\mu_0,\mu_1)$ is \emph{internal} if it is in the intrinsic core of the set of all admissible constraints: $(\mu_0,\mu_1)\in\icor \mathcal{C}$. In this case, we have 
\begin{equation*}
\ph=\exp(\varphi\oplus \psi)\, R _{01} 
\end{equation*}
for some jointly $R _{01}$-measurable function $\varphi\oplus \psi$ on $\XXX.$
\end{proposition}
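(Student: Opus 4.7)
The plan is to deduce the stated shape from the general representation (i)--(ii) recalled just before the proposition, where the prior result only gave $\ph=\1_S\exp(\varphi\oplus\psi)\, R_{01}$ for some measurable $S\subset\XXX$. The whole content of Proposition \ref{res-05} is that under the internal-constraint hypothesis one has $R_{01}(\XXX\setminus S)=0$, so the indicator $\1_S$ becomes superfluous.

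First I would reduce to the case where $R_{01}$ is a probability measure. Picking $B:\XX\to[1,\infty)$ satisfying \eqref{eq-61} with $\int B\,d\mu_0+\int B\,d\mu_1<\infty$, identity \eqref{eq-63} shows that in restriction to $\Pi(\mu_0,\mu_1)$ the functionals $H(\cdot|R_{01})$ and $H(\cdot|R_{01}^B)$ differ only by an additive constant. The unique minimizer $\ph$ is therefore the same for both reference measures, the admissible set $\mathcal{C}$ is unchanged, and so is its intrinsic core. Replacing $R_{01}$ by $R_{01}^B$, we may assume $R_{01}\in\PXX$.

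Next, the duality framework behind \eqref{D} identifies $\Lambda^*(\mu_0,\mu_1):=\inf\eqref{S}$ as the convex conjugate of $\Lambda(\varphi,\psi)=\log\IXX e^{\varphi\oplus\psi}\, dR_{01}$, and $\mathcal{C}=\dom\Lambda^*$. The key point is that at every $(\mu_0,\mu_1)\in\icor\dom\Lambda^*$, the generalized subdifferential $\partial\Lambda^*(\mu_0,\mu_1)$ is non-empty; any subgradient furnishes a jointly $R_{01}$-measurable tensor sum $\varphi\oplus\psi$, and Fenchel's equality case
\begin{equation*}
\IXX (\varphi\oplus\psi)\, d\ph-\log\IXX e^{\varphi\oplus\psi}\, dR_{01}=H(\ph|R_{01})
\end{equation*}
then forces $d\ph/dR_{01}=\exp(\varphi\oplus\psi)$ $R_{01}$-a.e.\ (up to absorption of the normalization into $\varphi$), not merely $\ph$-a.e. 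This is exactly the upgrade of (i) one seeks: in general a ``one-sided slack'' allows the density to vanish on a set of positive $R_{01}$-measure, and it is precisely non-emptiness of $\partial\Lambda^*$ that rules this out.

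The main obstacle is justifying the existence of a subgradient in this non-reflexive setting: neither $\PXX$ nor the natural dual space of tensor-sum functions is reflexive, and $\varphi,\psi$ are typically not individually measurable with respect to $R_0,R_1$ (only $\varphi\oplus\psi$ is $R_{01}$-measurable, as in (ii)). One handles this through the generalized Fenchel--Moreau machinery developed in \cite[Thm.\,5.1]{Leo01b}, which is tailored to guarantee subdifferentiability on the intrinsic core of the effective domain of a convex conjugate in such non-reflexive spaces. Once that machinery is in place, the proof reduces to a translation into the present notation.
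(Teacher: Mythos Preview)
Your proposal is correct and follows the same route as the paper: the proposition is not given a separate proof in the text, but is simply a summary of the preceding discussion under ``Avoiding obstacle (i)'', which in turn defers entirely to \cite{Leo01b} (see the footnote about reducing to a probability reference via \eqref{eq-63}). Your write-up makes the underlying mechanism explicit---non-emptiness of the (generalized) subdifferential of $\Lambda^*$ on $\icor\dom\Lambda^*$, followed by the equality case in the variational formula for the entropy---which is exactly the content of \cite[Thm.\,5.1]{Leo01b}.

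One small point worth tightening: your claim that after passing from $R_{01}$ to $R_{01}^B$ ``the admissible set $\mathcal{C}$ is unchanged, and so is its intrinsic core'' is not automatic. Identity \eqref{eq-63} only compares the two entropies on $\Pi(\mu_0,\mu_1)$ for marginals with $\int B\,d\mu_i<\infty$; for other constraint pairs the two admissible sets need not coincide, so the intrinsic cores could a priori differ. What you actually need (and what suffices) is that the \emph{particular} internal $(\mu_0,\mu_1)$ remains internal for the new problem; this holds because the witnesses $(\mu_0',\mu_1'),(\mu_0'',\mu_1'')\in\mathcal{C}$ placing $(\mu_0,\mu_1)$ on an open segment can themselves be taken with $\int B\,d\mu_i'<\infty$, $\int B\,d\mu_i''<\infty$ (e.g.\ by convex-combining with $R_0^w,R_1^w$ as in \eqref{eq-60}). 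With that adjustment your reduction is clean.
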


\subsubsection*{Overcoming obstacle (ii)}

To overcome the measurability obstacle (ii), it is necessary to impose some restriction on the reference measure $R _{01}.$   It is proved in \cite[Proposition\,6.1]{BL92} that when the function $\varphi\oplus \psi$ is measurable with respect to some product measure $\alpha\otimes \beta$ on the product space $\XXX,$ the functions $\varphi$ and $\psi$ are respectively $\alpha$-measurable and $\beta$-measurable. Therefore, if it is assumed that $$R_0\otimes R_1\ll R _{01},$$
any $R _{01}$-measurable function is $R_0\otimes R_1$-measurable. As $\varphi\oplus \psi$ is $R _{01}$-measurable, $\varphi$ and $\psi$ are respectively $R_0$ and $R_1$-measurable. 
 \\
Choosing  another way,
it is also possible to obtain the desired measurability property when assuming 
$$
R _{01}\ll R_0\otimes R_1.
$$
To see this, let $\ph,$ $\varphi$ and $\psi$ satisfy (i).
It is proved in \cite{Csi75} that $\varphi\oplus \psi$ stands in  the $L^1(\ph)$-closure  of $L^1(\mu_0)\oplus L^1(\mu_1),$ denoted by $\Lambda _{\ph}$. In addition, with \cite[Proposition 2]{RT93}, we know that when $\pi\in \PXX$ with first and second marginals $\pi_0=\mu_0$ and $\pi_1=\mu_1,$  is such that
$\pi\ll\pi_0\otimes \pi_1=\mu_0\otimes \mu_1$, then for any $\theta\in \Lambda_\pi,$ there exist two \emph{measurable} functions $\theta_0$ and $\theta_1$ such that $\theta=\theta_0\oplus \theta_1,$ $\pi\as$ 
Therefore, if 
\begin{equation*}
\ph\ll \mu_0\otimes \mu_1,
\end{equation*}
then $\varphi$ and $\psi$ are respectively $\mu_0$ and $\mu_1$-measurable functions.
\\
But this implicit criterion is not easy to check. The following result is more practical.

\begin{proposition}\cite[Thm.\,3]{RT93}\label{res-06}
If $R _{01}\ll R_0\otimes R_1,$ then $\varphi$ and $\psi$ in (i): $d\ph/d R _{01}=\exp(\varphi\oplus \psi), \ \ph\ae$, can be chosen  respectively as $\mu_0$ and $\mu_1$-measurable functions.
\end{proposition}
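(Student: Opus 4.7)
The plan is to use the implication chain already spelled out just before the statement: once we know $\ph \ll \mu_0 \otimes \mu_1$, Csisz\'ar's inclusion $\varphi \oplus \psi \in \Lambda _{\ph}$ combined with \cite[Prop.\,2]{RT93} yields a decomposition $\varphi \oplus \psi = \tilde\varphi \oplus \tilde\psi$, $\ph\as$, with $\tilde\varphi, \tilde\psi$ measurable on $\XX$. Using $(\tilde\varphi, \tilde\psi)$ in place of $(\varphi, \psi)$ in (i) then delivers the $\mu_0$- and $\mu_1$-measurable representatives claimed by the proposition. The whole task therefore reduces to proving the implication
\begin{equation*}
R _{01} \ll R_0 \otimes R_1 \quad \Longrightarrow \quad \ph \ll \mu_0 \otimes \mu_1.
\end{equation*}

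For this, let $r := dR _{01}/d(R_0 \otimes R_1) \geq 0$ be the Radon-Nikodym derivative furnished by the hypothesis, and set $f := e ^{\varphi}$, $g := e ^{\psi}$. By (i),
\begin{equation*}
\ph(dxdy) = f(x)\,g(y)\,r(x,y)\, R_0(dx) R_1(dy).
\end{equation*}
Integrating out $y$ and $x$ respectively, Tonelli gives $\mu_0 = F\cdot R_0$ and $\mu_1 = G\cdot R_1$ with
\begin{equation*}
F(x) := f(x) \IX g(y) r(x,y)\, R_1(dy), \qquad G(y) := g(y) \IX f(x) r(x,y)\, R_0(dx).
\end{equation*}
Set $D := \{F > 0\}$ and $E := \{G > 0\}$. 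Since $\mu_0(D ^c) = \mu_1(E ^c) = 0$ are the marginals of $\ph,$ the measure $\ph$ is concentrated on $D \times E.$ On this set, $\mu_0 \otimes \mu_1$ is absolutely continuous with respect to $R_0 \otimes R_1$ with the strictly positive density $(x,y) \mapsto F(x) G(y),$ so any $(\mu_0 \otimes \mu_1)$-null subset of $D \times E$ is $(R_0 \otimes R_1)$-null and \emph{a fortiori} $\ph$-null. Hence $\ph \ll \mu_0 \otimes \mu_1.$

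The main anticipated obstacle is measure-theoretic book-keeping rather than a conceptual difficulty: one must handle the possible $\sigma$-finiteness (as opposed to finiteness) of $R_0$ and $R_1$ in the appeal to Tonelli, take into account that identity (i) only holds on a subset of $\ph$-full measure, as discussed in obstacle (i) above via the set $S \subset \XXX,$ and check that the measurability produced by \cite[Prop.\,2]{RT93} transports faithfully to $\mu_0$- and $\mu_1$-measurability of the chosen representatives. Under the internality hypothesis of Proposition \ref{res-05} one may take $S = \XXX,$ so the product identity holds $R _{01}$-a.e.\ and the argument goes through without further care; in the general case one restricts the preceding computation to $S \cap (D \times E),$ where nothing changes.
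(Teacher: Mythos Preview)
Your proof is correct and follows the same route as the paper: reduce to $\ph \ll \mu_0 \otimes \mu_1$ and then invoke Csisz\'ar's inclusion together with \cite[Prop.\,2]{RT93}. The paper's argument for the reduction is just the two-line observation $\ph \ll R_{01} \ll R_0 \otimes R_1$, and the general fact that absolute continuity with respect to \emph{any} product measure forces $\ph \ll \ph_0 \otimes \ph_1 = \mu_0 \otimes \mu_1$; your detour through the explicit density $f(x)g(y)r(x,y)$ is unnecessary (and slightly awkward, since $f$ and $g$ are not yet known to be individually measurable), but the positivity-set argument you give with $D$ and $E$ is exactly how one justifies that general fact.
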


\begin{proof}
By assumption $\ph\ll R _{01}\ll R_0\otimes R_1$. But $\ph\ll R_0\otimes R_1$  implies that $\ph\ll \ph_0\otimes\ph_1=\mu_0\otimes \mu_1$  and we conclude as above.
\end{proof}

Extending the functions $\varphi$ and $\psi$ to their $R_0$ and $R_1$-measurable versions: $\1 _{\{d \mu_0/dR_0>0\}}\varphi$ and $\1 _{\left\{d \mu_1/dR_1>0\right\}} \psi$,  we see that  $\varphi$ and $\psi$ can be taken respectively $R_0$ and $R_1$-measurable.

\subsubsection*{Summing up}

Putting Propositions \ref{res-04}, \ref{res-05} and the above considerations  together with \eqref{eq-12},  we obtain the following

\begin{theorem}\label{res-08}
Suppose that $R$ satisfies
\begin{enumerate}
\item[(i)]
$R_0=R_1=m\in\MX$;
\item[(ii)]
$R _{01}(dxdy)\ge e ^{-A(x)-A(y)}\, m(dx)m(dy)$ for some nonnegative measurable function $A$ on $\XX;$
\item[(iii)]
$\IXX e ^{-B(x)-B(y)}R _{01}(dxdy)<\infty$ for some nonnegative measurable function $B$ on $\XX;$
\item[(iv)]
$m ^{\otimes 2}\ll R _{01}$ or $R _{01}\ll m ^{\otimes 2}.$
\end{enumerate}
Suppose also that the constraint $(\mu_0,\mu_1)$ satisfies
\begin{enumerate}
\item[(v)]
$H(\mu_0|m), H(\mu_1|m)<\infty;$
\item[(vi)]
$\IX (A+B)\,d \mu_0, \IX (A+B)\, d \mu_1<\infty$ where $A$ and $B$  appear at (ii) and (iii) above;
\item[(vii)]
$(\mu_0,\mu_1)$ is internal, see the statement of Proposition \ref{res-05}.\\ This is the case for instance when $m$ is a probability measure and $\mu_0,\mu_1\ge \epsilon m$, for some $\epsilon>0.$
\end{enumerate}
Then, \eqref{S} admits a unique solution $\ph$ and 
\begin{equation}\label{eq-14}
\ph(dxdy)=f_0(x)g_1(y)\, R _{01}(dxdy)
\end{equation}
where the positive  functions $f_0$ and $g_1$  are  $m$-measurable and solve :
\begin{equation}\label{eq-13}
\left\{\begin{array}{lcll}
f_0(x) E_R[g_1(X_1)\mid X_0=x]&=&d \mu_0/dm (x),&\  m\ae\\
g_1(y) E_R[f_0(X_0)\mid X_1=y]&=&d \mu_1/dm (y),&\  m\ae
\end{array}\right.
\end{equation}
which is called the \emph{Schr\"odinger system}\footnote{The article \cite{RT93} refers to \eqref{eq-13} as the Schr\"odinger equation, but this is  misleading. After Fortet and Beurling \cite{Fort40,Beu60}, we prefer  calling \eqref{eq-13} the Schr\"odinger system.}.
\end{theorem}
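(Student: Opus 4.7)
The plan is to assemble the conclusions of the preceding propositions in order, checking that the hypotheses of Theorem \ref{res-08} feed into each of them cleanly, and then to perform the final bookkeeping to obtain the Schr\"odinger system \eqref{eq-13}.

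First I would invoke Proposition \ref{res-04}(b) to obtain existence and uniqueness of the minimizer $\ph$. Indeed, the standing assumption (i) is explicitly the setup of Proposition \ref{res-04}, assumption (ii) here matches (i) there, assumption (iii) here matches (ii) there, and assumptions (v)--(vi) here are exactly (iii)--(iv) there. Hence \eqref{S} has a unique solution $\ph \in \PXX$. Next, by assumption (vii) the constraint pair $(\mu_0,\mu_1)$ lies in $\icor\mathcal{C}$, so Proposition \ref{res-05} gives
\begin{equation*}
\ph = \exp(\varphi \oplus \psi)\, R_{01}
\end{equation*}
for some jointly $R_{01}$-measurable function $\varphi \oplus \psi$ on $\XXX$. (The parenthetical sufficient condition in (vii), namely $m \in \PX$ with $\mu_0, \mu_1 \ge \epsilon m$, can be verified by exhibiting convex combinations of the form $\mu_i = (1-\eta)\mu_i' + \eta\mu_i''$ around $\mu_i$ inside $\mathcal{C}$, using that $H(\cdot|m)$ is finite at each $\mu_i$.)

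The third step is to peel $\varphi \oplus \psi$ into $m$-measurable factors. Here I would distinguish the two cases in assumption (iv). If $R_{01} \ll m^{\otimes 2}$, Proposition \ref{res-06} applies directly and delivers $m$-measurable representatives $\varphi$ and $\psi$. If instead $m^{\otimes 2} \ll R_{01}$, then any $R_{01}$-measurable function is $m^{\otimes 2} = R_0 \otimes R_1$-measurable, so by \cite[Prop.\,6.1]{BL92} cited in the preceding discussion, $\varphi$ and $\psi$ are respectively $R_0 = m$ and $R_1 = m$-measurable. In either case I set $f_0 := e^{\varphi}$ and $g_1 := e^{\psi}$, which are positive and $m$-measurable, and I obtain \eqref{eq-14}.

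Finally, to derive the Schr\"odinger system \eqref{eq-13}, I would project \eqref{eq-14} onto each marginal. For any Borel set $E \subset \XX$, the constraint $\ph_0 = \mu_0$ gives
\begin{equation*}
\mu_0(E) = \IXX \1_E(x) f_0(x) g_1(y)\, R_{01}(dxdy) = \int_E f_0(x)\, E_R[g_1(X_1)\mid X_0 = x]\, m(dx),
\end{equation*}
using $R_0 = m$ and conditioning on $X_0$. Identifying densities with respect to $m$ yields the first line of \eqref{eq-13} $m$-a.e., and the symmetric argument at time $1$ yields the second line. Since $d\mu_0/dm > 0$ $\mu_0$-a.e. and similarly for $\mu_1$, the functions $f_0$ and $g_1$ are positive on the relevant supports; outside these supports one extends by, say, $\1_{\{d\mu_0/dm > 0\}}\varphi$ and $\1_{\{d\mu_1/dm > 0\}}\psi$ as mentioned just before the theorem statement, which does not alter \eqref{eq-14}.

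The only delicate point, and the main obstacle, is the measurability step: obstacle (ii) in the text really requires the absolute continuity hypothesis (iv) to separate $\varphi \oplus \psi$ into $m$-measurable factors. Both directions in (iv) lead to the same conclusion but by quite different routes (the Beneš--Stěpán-type argument of \cite{BL92} versus the $L^1$-closure argument of \cite{RT93}), and care must be taken when $R_{01}$ is unbounded — which is handled, as in the proof of Lemma \ref{res-01}, by the reduction $R_{01} \mapsto R_{01}^B \in \PXX$ afforded by \eqref{eq-63}.
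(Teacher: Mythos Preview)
Your proposal is correct and follows exactly the route the paper indicates: the theorem is obtained by assembling Proposition \ref{res-04} (existence and uniqueness of $\ph$), Proposition \ref{res-05} (the internal constraint forces $d\ph/dR_{01}=\exp(\varphi\oplus\psi)$ everywhere), the two measurability arguments under (iv) (Proposition \ref{res-06} for $R_{01}\ll m^{\otimes2}$, \cite[Prop.\,6.1]{BL92} for $m^{\otimes2}\ll R_{01}$), and then reading off \eqref{eq-13} from the marginal constraints as in \eqref{eq-12}. The only minor slip is the attribution ``Bene\v{s}--\v{S}t\v{e}p\'an-type argument'': the paper cites Borwein--Lewis \cite{BL92} for the separation of $\varphi\oplus\psi$ into measurable summands when the reference is a product measure.
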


\begin{itemize}
\item
It is not necessary for $E_R[g_1(X_1)\mid X_0]$ and $E_R[f_0(X_0)\mid X_1]$
 to be well-defined that $f_0(X_0)$ and $g_1(X_1)$ are $R$-integrable, since $f_0$ and $g_1$ are positive measurable functions. Only a notion of  integration of \emph{nonnegative} functions is required, see \cite{Leo12b}.

\item
The assumption (vii) is here to make sure that $d\ph/dR _{01}>0.$ If it is not satisfied,  $d\ph/dR _{01}$ may not have a product form and its structure may be quite complex. The complete description of $d\ph/dR _{01}$  in this case is given in \cite{Leo01b}.

\item
In view of \eqref{eq-60}, for the assumption (vii) to hold, it is enough that $
\left\{\begin{array}{lcl}
\mu_0&\ge& \epsilon R^w_0\\ \mu_1&\ge &\epsilon R^w_1
\end{array}\right.
,$ for some $\epsilon>0,$ and we can choose $w=0$ when $m$ is a probability measure.
\end{itemize}

\subsection*{The solution $\Ph$ of \eqref{Sdyn}}
We  deduce from this theorem the characterization of $\Ph.$

\begin{theorem}\label{res-09}
Suppose that the hypotheses of Theorem \ref{res-08}  are satisfied.
Then, \eqref{Sdyn} admits the unique solution
\begin{equation}\label{eq-15}
\Ph=f_0(X_0)g_1(X_1)\,R \in\PO
\end{equation}
where $f_0$ and $g_1$ are the measurable positive functions which appear at \eqref{eq-14} and solve \eqref{eq-13}.
\end{theorem}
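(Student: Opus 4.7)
The plan is to combine Theorem \ref{res-08}, which under the stated hypotheses produces the unique minimizer $\ph$ of the static problem \eqref{S} in the product form $\ph(dxdy)=f_0(x)g_1(y)\,R_{01}(dxdy)$, with Proposition \ref{res-03}, which reconstructs the dynamic solution $\Ph$ from $\ph$ by gluing the bridges of $R$. Existence and uniqueness of $\Ph$ are then automatic from Proposition \ref{res-03}; the only task left is to recognize the resulting measure as $f_0(X_0)g_1(X_1)\,R$.

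Concretely, I would first invoke Proposition \ref{res-03} to write
\begin{equation*}
\Ph(\cdot)=\IXX R^{xy}(\cdot)\,\ph(dxdy)=\IXX f_0(x)g_1(y)\,R^{xy}(\cdot)\,R_{01}(dxdy).
\end{equation*}
Because the bridge $R^{xy}$ is supported on paths $\omega$ with $\omega_0=x$ and $\omega_1=y$, the scalar factor $f_0(x)g_1(y)$ coincides $R^{xy}$-almost surely with the random variable $f_0(X_0)g_1(X_1)$. Hence, for $R_{01}$-almost every $(x,y)$,
\begin{equation*}
f_0(x)g_1(y)\,R^{xy}=f_0(X_0)g_1(X_1)\,R^{xy}\in\MO,
\end{equation*}
where the right-hand side now means the measure with density $f_0(X_0)g_1(X_1)$ against $R^{xy}$.

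Finally, applying the disintegration formula to $R$ itself,
\begin{equation*}
R=\IXX R^{xy}\,R_{01}(dxdy),
\end{equation*}
and pulling the $(x,y)$-independent random variable $f_0(X_0)g_1(X_1)$ outside the integral, I arrive at
\begin{equation*}
\Ph=f_0(X_0)g_1(X_1)\,R,
\end{equation*}
which is the claimed identity. That this measure has total mass $1$ and the prescribed marginals $\mu_0,\mu_1$ is not a separate check, since it is forced by $\Ph_{01}=\ph$ and the Schr\"odinger system \eqref{eq-13} satisfied by $(f_0,g_1)$.

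I do not expect any serious obstacle. The only delicate point is that $R$ may be unbounded, so one has to be sure that the endpoint disintegration $R=\int R^{xy}\,R_{01}(dxdy)$ and the manipulations above remain valid; this is granted because $\XX$ and $\OO$ are Polish (so regular conditional measures $R^{xy}$ exist along the measurable endpoint map $(X_0,X_1)$) and because $f_0,g_1$ are nonnegative measurable functions, for which Tonelli-type exchanges require no integrability hypothesis.
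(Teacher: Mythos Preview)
Your proposal is correct and follows exactly the approach of the paper, which simply states that the result is a direct consequence of Proposition \ref{res-03}, Theorem \ref{res-08}, \eqref{eq-04} and \eqref{eq-14}. You have in fact supplied more detail than the paper does, by explicitly carrying out the identification of $\IXX f_0(x)g_1(y)\,R^{xy}(\cdot)\,R_{01}(dxdy)$ with $f_0(X_0)g_1(X_1)\,R$ via the bridge disintegration of $R$.
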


\begin{proof}
The existence of the solution $\Ph$ and its representation by the Radon-Nikodym formula \eqref{eq-15} are direct consequences of Proposition \ref{res-03},  Theorem \ref{res-08}, \eqref{eq-04} and \eqref{eq-14}.
\end{proof}

\subsection*{The special case where $R$ is Markov}

We are going to  assume that the reference path measure $R$ is  Markov. Under this restriction, we obtain at Theorem \ref{res-17} below a more efficient version of Theorem \ref{res-08}. Let us recall the time-symmetric definition of the Markov property.

\subsubsection*{Markov property}

One says that $R\in\MO$ is \emph{Markov} if its time marginal $R_0$ (or any other time marginal $R _{t_o}$ with $0\le t_o\le 1$) is $\sigma$-finite\footnote{This assumption is necessary for defining conditional versions of $R$, such as $R(\cdot\mid X_t)$ or $R(\cdot\mid X _{[0,t]})$, see \cite{Leo12b}.} and for each $0\le t\le 1,$
\begin{equation*}
R(X _{[0,t]}\in \cdot, X _{[t,1]}\in \cdot\cdot\mid X_t)
=R(X _{[0,t]}\in \cdot\mid X_t)R( X _{[t,1]}\in \cdot\cdot\mid X_t)
\end{equation*}
signifying that under $R$, for any $t$, conditionally on the present state $X_t$ at time $t$, past and future are independent. This is equivalent to the usual forward time-oriented  Markov property
\begin{equation*}
R(X _{[t,1]}\in\cdot|X _{[0,t]})=R(X _{[t,1]}\in\cdot|X _{t}),\quad \forall t\in\ii.
\end{equation*}

\begin{proposition}\label{res-16}
Suppose that the reference measure $R\in\MO$ is Markov.
If it exists, the solution $\Ph$ of \eqref{Sdyn} is also Markov.
\end{proposition}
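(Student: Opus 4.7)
The plan is to prove that $\Ph$ is Markov by a \emph{Markovization} argument. Fix $t\in(0,1)$; I construct a competitor $\Ph^M\in\PO$ that is admissible for \eqref{Sdyn} and, by construction, has conditionally independent past and future given $X_t$, then show $H(\Ph^M|R)\le H(\Ph|R)$. Uniqueness of the Schr\"odinger minimizer then forces $\Ph=\Ph^M$, yielding the time-$t$ conditional independence recalled just above the statement; since $t$ is arbitrary, $\Ph$ is Markov.

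I would first disintegrate $\Ph$ and $R$ along $X_t$: the regular conditional kernels $\Ph(\cdot|X_t=z)$ and $R(\cdot|X_t=z)$ exist as probability measures on $\OO$ (using $\Ph_t\in\PX$ and the $\sigma$-finiteness of $R_t$ built into the Markov hypothesis), and the Markov property of $R$ factorizes the latter as $R(\cdot|X_t=z)=R^-_z\otimes R^+_z$ on $\sigma(X_{[0,t]})\vee\sigma(X_{[t,1]})$. Writing $\Ph^-_z$ and $\Ph^+_z$ for the analogous $\Ph$-conditionals on past and future (which need not factorize a priori), I define
\begin{equation*}
\Ph^M(\cdot):=\IX\big(\Ph^-_z\otimes\Ph^+_z\big)(\cdot)\,\Ph_t(dz)\in\PO.
\end{equation*}
A short tower-property computation gives $\Ph^M_0=\Ph_0=\mu_0$ and $\Ph^M_1=\Ph_1=\mu_1$, so $\Ph^M$ is admissible for \eqref{Sdyn}.

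For the entropy comparison I would combine two identities. First, the additive decomposition \eqref{eq-10} applied with $\phi=X_t$ (the same tool used in the proof of Proposition \ref{res-03}) yields
\begin{equation*}
H(\Ph|R)=H(\Ph_t|R_t)+\IX H\big(\Ph(\cdot|X_t=z)\,\big|\,R^-_z\otimes R^+_z\big)\,\Ph_t(dz),
\end{equation*}
and analogously for $\Ph^M$. Second, the elementary product-reference identity
\begin{equation*}
H(\nu|\alpha\otimes\beta)=H(\nu|\nu_1\otimes\nu_2)+H(\nu_1|\alpha)+H(\nu_2|\beta),
\end{equation*}
valid for any probability $\nu$ with marginals $\nu_1,\nu_2$, gives
\begin{equation*}
H\big(\Ph(\cdot|X_t=z)\,\big|\,R^-_z\otimes R^+_z\big)\ge H(\Ph^-_z|R^-_z)+H(\Ph^+_z|R^+_z),
\end{equation*}
with equality iff $\Ph(\cdot|X_t=z)=\Ph^-_z\otimes\Ph^+_z$. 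Integrating against $\Ph_t$ and adding $H(\Ph_t|R_t)$ yields $H(\Ph^M|R)\le H(\Ph|R)$, and strict convexity of $H(\cdot|R)$ closes the argument.

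The main obstacle is technical rather than conceptual: since $R$ may be an unbounded $\sigma$-finite path measure (for instance the reversible Brownian motion on $\RR^n$), one must carefully justify both the regular disintegration $R(\cdot|X_t=z)$ as a probability kernel and the additive decomposition \eqref{eq-10} in this setting. These are exactly what the appendix's machinery for relative entropy against $\sigma$-finite references is designed to provide; once granted, the argument above is purely measure-theoretic and uses nothing about the specific structure of $\XX$ or of $R$ beyond the Markov property.
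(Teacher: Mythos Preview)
Your proposal is correct and follows essentially the same ``Markovization'' strategy as the paper: construct the competitor $\Ph^M=\IX(\Ph^-_z\otimes\Ph^+_z)\,\Ph_t(dz)$, check it is admissible, and show $H(\Ph^M|R)\le H(\Ph|R)$ via the additive decomposition \eqref{eq-10}, then invoke uniqueness. The only cosmetic difference is the entropy inequality step: the paper applies \eqref{eq-10} once more with $\phi=X_{[0,t]}$ and then Jensen's inequality (convexity of $H(\cdot|R^{tz}_{[t,1]})$) to compare $H(\Ph(\cdot|X_t=z)|R^-_z\otimes R^+_z)$ with its factorized version, whereas you use the mutual-information identity $H(\nu|\alpha\otimes\beta)=H(\nu|\nu_1\otimes\nu_2)+H(\nu_1|\alpha)+H(\nu_2|\beta)$ and the nonnegativity of the first term; these are equivalent here.
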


\begin{proof}
We need some notation. For each $0\le t\le 1,$ we set $\OO _{[0,t]}:=\left\{\omega _{|[0,t]}; \omega\in\OO\right\} $ and $\OO _{[t,1]}:=\left\{\omega _{|[t,1]};\omega\in\OO\right\} $ the set of all paths on $[0,t]$ and $[t,1]$ respectively. For any $Q\in\MO,$ $Q ^{t,z}:=Q(\cdot|X_t=z)\in\PO,$ $Q ^{t,z}_{[0,t]}:=Q(X _{[0,t]}\in\cdot|X_t=z)\in \mathrm{P}(\OO _{[0,t]})$ and 
$Q ^{t,z}_{[t,1]}:=Q(X _{[t,1]}\in\cdot|X_t=z)\in \mathrm{P}(\OO _{[t,1]}).$ 

\begin{claim}
We  fix $0\le t\le 1$. Among all the $P\in\PO$ such that $P_t=\mu,$ $P ^{tz}_{[0,t]}=Q_<^{tz}$ and $P ^{tz}_{[t,1]}=Q_>^{tz}$, $z\in\XX,$ where $\mu\in\PX,$ $Q_<^{tz}\in \mathrm{P}(\OO _{[0,t]}\cap \left\{X_t=z\right\} )$
and $Q_>^{tz}\in \mathrm{P}(\OO _{[t,1]}\cap \left\{X_t=z\right\} )$ are prescribed, the relative entropy $H(\cdot|R)$ attains its unique minimum  at $P^*(\cdot)=\IX Q_<^{tz}\otimes Q_>^{tz}(\cdot)\,\mu(dz).$ In particular, $P ^{*}_{[t,1]}(\cdot|X _{[0,t]})=P ^{*}_{[t,1]}(\cdot|X_t).$ 
\end{claim}
Accept this claim for a while and suppose, ad absurdum, that $\Ph$ is not Markov. Then, there exists some $0\le t\le 1$ such that $\Ph(\cdot|X_t)\not=\Ph_{[0,t]}(\cdot|X_t)\otimes \Ph_{[t,1]}(\cdot|X_t).$ Choosing $\mu=\Ph_t,$ $Q_<^{tz}=\Ph ^{tz}_{[0,t]}$ and $Q_>^{tz}=\Ph ^{tz}_{[t,1]}$ in the above claim, we see that the time marginals are unchanged: $P^*_s=\Ph_s,$ for all $s\in\ii$ and that $H(P^*|R)<H(\Ph|R):$   $\Ph$ is not the solution to \eqref{Sdyn}, a contradiction.

It remains to prove the claim. With \eqref{eq-10}, we see that
\begin{equation*}
H(P|R)=H(\mu|R_t)+\IX H(P ^{tz}|R ^{tz})\, \mu(dz)
\end{equation*}
and 
\begin{equation*}
H(P ^{tz}|R ^{tz})=H(Q_<^{tz}|R ^{tz}_{[0,t]})+\int _{\OO _{[0,t]}}H(P ^{tz}_{[t,1]}(\cdot|X _{[0,t]})|R ^{tz}_{[t,1]}(\cdot|X_{[0,t]}))\,dQ_<^{tz}.
\end{equation*}
Noting that $R ^{tz}_{[t,1]}(\cdot|X_{[0,t]})=R ^{tz}_{[t,1]}$ and  $Q_> ^{tz}=\int _{\OO _{[0,t]}} P ^{tz}_{[t,1]}(\cdot|X _{[0,t]})\,dQ_<^{tz}$, we obtain with Jensen's inequality that 
\begin{eqnarray*}
H\big(Q_> ^{tz}\big|R ^{tz}_{[t,1]}\big)&=&H \Big(\int _{\OO _{[0,t]}} P ^{tz}_{[t,1]}(\cdot|X _{[0,t]})\,dQ_<^{tz} \Big|R ^{tz}_{[t,1]}(\cdot|X_{[0,t]})\Big)\\
&\le&
	\int _{\OO _{[0,t]}} H ( P ^{tz}_{[t,1]}(\cdot|X _{[0,t]})|R ^{tz}_{[t,1]}(\cdot|X _{[0,t]}))\,dQ_<^{tz} 
\end{eqnarray*}
with equality if and only if $P ^{tz}_{[t,1]}(\cdot|X _{[0,t]})=Q ^{tz}_>,$\quad $Q ^{tz}_<\ae$ Since this holds for $\mu$-almost every $z,$  this amounts to say that $P(\cdot)=\IX Q_<^{tz}\otimes Q_>^{tz}(\cdot)\,P_t(dz)$ and it also means that $P ^{tz}_{[t,1]}(\cdot|X _{[0,t]})=P ^{tz}_{[t,1]},$ $P ^{tz}_{[0,t]}\ae$ which is the desired forward Markov property at time $t$. This completes the proofs of the claim and the proposition.
\end{proof}

\subsubsection*{Reversibility}

A path measure $R\in\MO$ is said to be \emph{reversible} with $m\in\MX$
as its reversing measure ($m$-reversible for short), if $R_0=m$ and for any $0\le u\le v\le1,$ $R$ is invariant with respect to the time reversal mapping $\mathrm{rev} ^{uv}$ defined by: $\mathrm{rev} ^{uv}_t:=X _{(u+v-t)^+}, u\le t\le v$, meaning that $(\mathrm{rev} ^{uv})\pf R=(X _{[u,v]})\pf R.$ 

Clearly, this implies that $R_u=R_v$ for any $u,v$. In other words, $R$ is $m$-\emph{stationary} i.e.\ $R_t=m,$ for all $0\le t\le 1.$

This notion is invoked at statement (c) of the following result.

\begin{theorem}[The Markov case]\label{res-17}\ 

Suppose that the reference measure $R\in\MO$  satisfies
\begin{enumerate}
\item[(i)]
$R$ is Markov;
\item[(ii)]
there exist some $0<t_o<1$ and some measurable $\XX_o\subset\XX$ such that $R _{t_o}(\XX_o)>0$ and 
\begin{equation*}
R _{01}\ll R\big((X_0,X_1)\in\cdot|X _{t_o}=z\big),\quad \forall z\in\XX_o.
\end{equation*}
\item[(iii)]
$R_0=R_1=m\in\MX$;
\item[(iv)]
$R _{01}(dxdy)\ge e ^{-A(x)-A(y)}\, m(dx)m(dy)$ for some nonnegative measurable function $A$ on $\XX;$
\item[(v)]
$\IXX e ^{-B(x)-B(y)}R _{01}(dxdy)<\infty$ for some nonnegative measurable function $B$ on $\XX;$
\end{enumerate}
Suppose also that the constraint $(\mu_0,\mu_1)$ satisfies
\begin{enumerate}
\item[(vi)]
$H(\mu_0|m), H(\mu_1|m)<\infty;$
\item[(vii)]
$\IX (A+B)\,d \mu_0, \IX (A+B)\, d \mu_1<\infty$ where $A$ and $B$  appear at (iv) and (v) above.
\end{enumerate}
\begin{enumerate}[(a)]
\item
Then,  the unique solution $\Ph$  of \eqref{Sdyn} is also Markov and
\begin{equation}\label{eq-15b}
\Ph=f_0(X_0)g_1(X_1)\,R \in\PO
\end{equation}
where $f_0$ and $g_1$ are the $m$-measurable \emph{nonnegative} functions which appear at \eqref{eq-14} and solve the Schr\"odinger system \eqref{eq-13}.
\item
Conversely, let $\Ph$ be defined by \eqref{eq-15b} with $f_0$ and $g_1$ two $m$-measurable nonnegative functions solving the Schr\"odinger system \eqref{eq-13}. Then, $\Ph$ is Markov and it is the unique solution of \eqref{Sdyn}.

\item
For the properties (i), (ii) and (iii) to hold, it is enough that $R$ is a $m$-reversible Markov measure which admits a regenerative set in the following sense: There exists a measurable subset $\XX_o\subset\XX$ with $m(\XX_o)>0$ such that for each $0<h<1$ and all measurable subset $A\subset\XX$ with $m(A)>0,$ we have:
$	
R(X_h\in A|X_0=x)>0,$ for all $x\in\XX_o.
$	
\end{enumerate}
\end{theorem}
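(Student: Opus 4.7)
The plan proceeds through (a), (b), (c) in sequence, with the main work concentrated in (a). For (a), existence and uniqueness of $\ph$ (hence of $\Ph$ via Proposition \ref{res-03}) is handed to Proposition \ref{res-04}(b), whose hypotheses are precisely our (iii)--(vii); no Markov assumption enters at this step. The Markov property of $\Ph$ then follows at once from Proposition \ref{res-16}. The substance of (a) is the product representation \eqref{eq-15b}: start from the general structure theorem \cite[Thm.\,5.1]{Leo01b}, which gives $d\ph/dR_{01}=\1_S\exp(\varphi\oplus\psi)$ with $\varphi\oplus\psi$ jointly $R_{01}$-measurable and $S\subset\XXX$ measurable. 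Two refinements are needed: (1) $m$-measurable representatives of $\varphi$ and $\psi$ on $\XX$, and (2) factorization $S=S_0\times S_1$ modulo $R_{01}$-nullity. Once these are in hand, setting $f_0:=e^{\varphi}\1_{S_0}$ and $g_1:=e^{\psi}\1_{S_1}$ yields \eqref{eq-15b}, and the Schr\"odinger system \eqref{eq-13} drops out by reading $\ph_0=\mu_0$, $\ph_1=\mu_1$ through the Markov property of $R$.

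For (1), combine Markovianity at $t_o$ with assumption (ii): the former factorizes $R((X_0,X_1)\in\cdot\,|X_{t_o}=z)=\alpha_z\otimes\beta_z$, where $\alpha_z:=R(X_0\in\cdot|X_{t_o}=z)$ and $\beta_z:=R(X_1\in\cdot|X_{t_o}=z)$, so that (ii) reads $R_{01}\ll\alpha_z\otimes\beta_z$ for every $z\in\XX_o$. The splitting argument behind the second case of Proposition \ref{res-06} applies verbatim to the product measure $\alpha_z\otimes\beta_z$ (noting $\ph\ll R_{01}\ll\alpha_z\otimes\beta_z$ implies $\ph\ll\ph_0\otimes\ph_1=\mu_0\otimes\mu_1$ by a density bookkeeping) and supplies $\mu_0$-measurable $\varphi$ and $\mu_1$-measurable $\psi$; by (vi) these are in particular $m$-measurable. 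For (2), approximate $(\mu_0,\mu_1)$ by internal constraints $(\mu_0^\epsilon,\mu_1^\epsilon)$ built as in \eqref{eq-60}: for each $\epsilon>0$, Proposition \ref{res-05} combined with step (1) gives a strictly positive product density $\ph^\epsilon=f_0^\epsilon(X_0)g_1^\epsilon(X_1)R_{01}$. Fix a normalization (e.g.\ $f_0^\epsilon(x_o)=1$ at some reference $x_o$) and pass to the limit $\epsilon\downarrow0$ by stability of entropy minimizers under total-variation convergence of the marginals; the limit density keeps the product form $f_0\otimes g_1$, the functions now allowed to vanish on $m$-positive sets. This passage to the limit is the technical heart of the improvement over Theorem \ref{res-08} and the hardest point of the proof, since in the absence of internality one cannot invoke Proposition \ref{res-05} directly.

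For (b), set $\Ph:=f_0(X_0)g_1(X_1)R$. Using \eqref{eq-13} and $R_0=R_1=m$, integration yields $\Ph(\OO)=\IX f_0\cdot E_R[g_1(X_1)|X_0=\cdot]\,dm=\IX(d\mu_0/dm)\,dm=1$ and, by the same calculation done marginally, $\Ph_0=\mu_0$, $\Ph_1=\mu_1$. Markovianity of $\Ph$ follows from that of $R$ and the endpoint-dependence of $d\Ph/dR$: for $A\in\sigma(X_{[0,t]})$ and $B\in\sigma(X_{[t,1]})$,
\begin{equation*}
E_\Ph[\1_A\1_B|X_t]=\frac{E_R[f_0(X_0)\1_A|X_t]\,E_R[g_1(X_1)\1_B|X_t]}{E_R[f_0(X_0)|X_t]\,E_R[g_1(X_1)|X_t]}=E_\Ph[\1_A|X_t]\,E_\Ph[\1_B|X_t].
\end{equation*}
Optimality and uniqueness follow from the identity $H(P|R)=H(P|\Ph)+H(\Ph|R)\ge H(\Ph|R)$, with equality iff $P=\Ph$, valid for every admissible $P$ because $\log(d\Ph/dR)=\log f_0(X_0)+\log g_1(X_1)$ integrates to the same value against $P$ and $\Ph$ by marginal agreement.

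For (c), $m$-reversibility forces $R_t=m$ for every $t$, hence (iii), while (i) is part of the hypothesis. For (ii), use the Markov factorization $R((X_0,X_1)\in\cdot|X_{t_o}=z)=\alpha_z\otimes\beta_z$ again. Reversibility applied to $\mathrm{rev}^{0,t_o}$ yields the joint symmetry $R(X_0\in dx, X_{t_o}\in dz)=R(X_0\in dz,X_{t_o}\in dx)$, whence $\alpha_z(A)=R(X_{t_o}\in A|X_0=z)$ for $m$-a.e.\ $z$; regenerativity then gives $\alpha_z(A)>0$ for every $z\in\XX_o$ and every $m$-positive $A$, and the analogous statement for $\beta_z$ follows from $\mathrm{rev}^{t_o,1}$. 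Consequently, on a rectangle $A\times B$ with $z\in\XX_o$, $(\alpha_z\otimes\beta_z)(A\times B)=0$ forces $A$ or $B$ to be $m$-null, so $R_{01}(A\times B)\le m(A)\wedge m(B)=0$; a monotone-class extension from rectangles to measurable subsets of $\XXX$ delivers $R_{01}\ll\alpha_z\otimes\beta_z$, which is exactly (ii).
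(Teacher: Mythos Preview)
Your argument for part (a) diverges from the paper's at the crucial step, and the divergence opens a real gap. After obtaining existence (Proposition \ref{res-04}), Markovianity of $\Ph$ (Proposition \ref{res-16}), and $d\Ph/dR=(d\ph/dR_{01})(X_0,X_1)$ (Proposition \ref{res-03}), the paper does \emph{not} return to the structure theorem of \cite{Leo01b} or to any approximation by internal constraints. Instead it invokes the result of \cite{LRZ12}: under hypotheses (i) and (ii), any Markov probability $P$ with $dP/dR=h(X_0,X_1)$ necessarily satisfies $h=f\otimes g$ for some nonnegative measurable $f,g$. This is a purely structural statement about Markov measures and delivers \eqref{eq-15b} in one stroke, with no need for your steps (1) and (2).

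Your step (2) is where the gap lies. You assert that after passing to the limit $\epsilon\downarrow0$, ``the limit density keeps the product form $f_0\otimes g_1$'', but you do not prove this, and it is not a soft consequence of $\ph^\epsilon\to\ph$. Preservation of the tensor-sum structure $\varphi^\epsilon\oplus\psi^\epsilon$ under limits is exactly the closedness problem for sum spaces studied in \cite{RT98}, and it is not free; nor do you supply any compactness for the sequences $f_0^\epsilon$, $g_1^\epsilon$ individually (the normalization $f_0^\epsilon(x_o)=1$ does not by itself prevent degeneration). You correctly flag this as ``the technical heart'' and ``the hardest point'', but then leave it unargued. The paper's route via \cite{LRZ12} bypasses the difficulty entirely by exploiting the Markov property of $\Ph$ rather than approximating the constraint.

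Your treatments of (b) and (c) are more explicit than the paper's (which cites \cite{Csi75,Leo01b} for (b) and calls (c) an ``easy exercise'') and are essentially sound. One caution in (b): the Pythagorean identity $H(P|R)=H(P|\Ph)+H(\Ph|R)$ relies on $\log f_0(X_0)+\log g_1(X_1)$ being $P$-integrable, which needs a word since $f_0,g_1$ are only nonnegative; this is handled cleanly by the cited references.
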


\begin{proof} \boulette{(a)}
By Proposition \ref{res-04}, the properties (iii)--(vii) assure the existence of the unique solution $\Ph$ of \eqref{Sdyn}.
\\
With Proposition \ref{res-03}, we have $\Ph ^{xy}=R ^{xy},$ for all $(x,y),$ $\ph\ae$ This means that 
\begin{equation*}
\frac{d\Ph}{dR}=\frac{d\ph}{dR _{01}}(X_0,X_1).
\end{equation*}
On the other hand, we have just seen at Proposition \ref{res-16} that $\Ph$ is Markov. But, it is proved in \cite{LRZ12} that under the assumptions (i) and (ii), if $P\in\PO$ is a Markov measure such that $dP/dR=h(X_0,X_1)$ for some measurable function $h,$ then there exist two measurable nonnegative functions $f$ and $g$ such that $P=f(X_0)g(X_1)\,R$. This proves statement (a).
\Boulette{(b)}
The fact that $\Ph$ is the solution of \eqref{Sdyn} is proved in \cite{Csi75} by a geometric approach or in \cite{Leo01b} by a functional analytic approach. We easily see that $\Ph$ inherits the Markov property of $R$ using the time-symmetric definition of the Markov property together with the product shape of \eqref{eq-15b}.
\Boulette{(c)}
Statement (c) is an easy exercise.
\end{proof}

Remark that, unlike Theorem \ref{res-08},  Theorem \ref{res-17} does not require that the constraint $(\mu_0,\mu_1)$ is internal. Also remark that the functions $f_0$ and $g_1$ are nonnegative (in contrast with Theorem \ref{res-08} where they are positive) and that it may happen that $R_0(f_0=0)$ or $R_1(g_1=0)$ is positive. 

Theorem \ref{res-17} extends a similar result by F\"ollmer and Gantert in \cite{FG97}, where it is required  for the product shape formula \eqref{eq-15b} to hold, that $R\ll\Ph$ and also that $\XX_o$ has full measure.

\section{$(f,g)$-transform of a Markov measure}

Motivated by Theorems \ref{res-09} and \ref{res-17}, we introduce the transform $f_0(X_0)g_1(X_1)\,R$ of a Markov measure $R$ and call it an $(f,g)$-transform. It was already noticed by F\"ollmer \cite{Foe85,FG97} and Nagasawa \cite{Naga89}, that it is a time symmetric version of Doob's usual $h$-transform \cite{Doob57,Doob84}.

We are going to  assume for simplicity that the reference path measure $R$ is  reversible. Let us recall the definition of this notion.

\subsection*{$(f,g)$-transform of a reversible Markov measure}
Let us first state an assumption which will hold for the remainder of the paper.

\begin{assumption}
The reference path measure $R\in\MO$ is  Markov and $m$-reversible with $m\in\MX$. 
\end{assumption}
The Markov property of the reference measure will turn out to be crucial for the description of the dynamics of the solution $\Ph$ of \eqref{Sdyn}.  Indeed, we have already seen at Proposition \ref{res-16} that $\Ph$ inherits the Markov property from $R.$ It follows that its dynamics is characterized by its stochastic derivatives. On the other hand,  reversibility  is only assumed for simplicity. 

\begin{definition}[$(f,g)$-transform]\label{def-01}
Let $f_0$, $g_1:\XX\to[0,\infty)$ be two nonnegative measurable functions such that $E_R(f_0(X_0)g_1(X_1))=1$. The path measure
\begin{equation}\label{eq-17}
P:=f_0(X_0)g_1(X_1)\, R\in\PO
\end{equation}
is called an $(f,g)$-transform of $R.$
\end{definition}

This definition is motivated by Theorems \ref{res-09} and \ref{res-17}  which assert that the solution $\Ph$ of \eqref{Sdyn} is an $(f,g)$-transform of $R$. Note that under Theorem \ref{res-09}'s assumptions, $f_0$ and $g_1$ are positive, while they are allowed to vanish under Theorem \ref{res-17}'s assumptions, as in Definition \ref{def-01}.

Let us introduce for each $t\in\ii,$ the functions $f_t,g_t:\XX\to[0,\infty)$  defined by
\begin{equation}\label{eq-20}
\left\{\begin{array}{lcl}
f_t(z)&:=& E_R(f_0(X_0)\mid X_t=z)\\
g_t(z)&:=& E_R(g_1(X_1)\mid X_t=z)
\end{array}\right.,\quad \textrm{for } P_t\ae\ z\in\XX.
\end{equation}
Remark that although we have $E_R(f_0(X_0)g_1(X_1))<\infty,$ this does not ensure that $f_0(X_0)$ and $g_1(X_1)$ are integrable. We have to use positive integration to give a meaning to the conditional expectations  $E_R(f_0(X_0)\mid X_t),E_R(g_1(X_1)\mid X_t)\in[0,\infty]$, see  \cite{Leo12b}.

Next result is a kind of converse of Theorems \ref{res-09} and \ref{res-17}.

\begin{theorem}\label{res-12}
If the functions $f_0$ and $g_1$ entering the definition of the $(f,g)$-transform $P$ of $R$ given at \eqref{eq-17} satisfy 
\begin{equation*}
\left\{\begin{array}{l}
\IX g_0 f_0\log f_0 \,dm<\infty\\
\IX f_1g_1\log g_1\,dm<\infty
\end{array}\right.
\end{equation*}
(as a convention $0\log 0=0$), then $P _{01}$ and $P$ are the unique solutions of \eqref{S} and \eqref{Sdyn} respectively, where the prescribed constraints $\mu_0,$ $\mu_1\in\PX$ are chosen to satisfy \eqref{eq-13}, i.e.\ using notation \eqref{eq-20}
\begin{equation}\label{eq-21}
\left\{\begin{array}{l}
\mu_0= f_0g_0\,m\\
\mu_1= f_1g_1\,m
\end{array}\right..
\end{equation}
\end{theorem}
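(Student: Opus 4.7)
The plan is to work directly at the dynamic level, using a chain-rule identity for the relative entropy together with the fact that $dP/dR = f_0(X_0)g_1(X_1)$ depends only on the endpoints $(X_0,X_1)$; the static conclusion will then drop out from Proposition \ref{res-03}.

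First I would verify that $P$ has the prescribed marginals. A direct conditioning on $X_0$, combined with $R_0 = m$ and the definition \eqref{eq-20} of $g_0$, gives $P_0(dz) = f_0(z)\, E_R[g_1(X_1)\mid X_0=z]\, m(dz) = f_0 g_0\, m = \mu_0$, and symmetrically $P_1 = \mu_1$. As a by-product,
\begin{equation*}
H(P|R) = \IX \log f_0 \, dP_0 + \IX \log g_1 \, dP_1 = \IX f_0 g_0 \log f_0\, dm + \IX f_1 g_1 \log g_1 \, dm
\end{equation*}
is finite by the integrability hypotheses of the theorem, so $P$ is admissible for \eqref{Sdyn}.

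The heart of the argument is a chain-rule comparison. Pick any competitor $P'\in\PO$ with $P'_0=\mu_0$, $P'_1=\mu_1$ and $H(P'|R)<\infty$. Because $\mu_0$ and $\mu_1$ charge only $\{f_0>0\}$ and $\{g_1>0\}$ respectively, $P'$ is concentrated on $\{f_0(X_0)g_1(X_1)>0\}$, which is precisely where $dP/dR>0$; hence $P'\ll P$. Writing $\log(dP'/dR) = \log(dP'/dP)+\log f_0(X_0) + \log g_1(X_1)$ $P'$-a.s.\ and integrating against $P'$, the marginal constraints allow the endpoint integrals to be evaluated against $\mu_0$ and $\mu_1$ exactly as for $P$ itself, yielding
\begin{equation*}
H(P'|R) = H(P'|P) + H(P|R).
\end{equation*}
Nonnegativity and strict convexity of $H(\cdot|P)$ then give $H(P'|R)\ge H(P|R)$ with equality iff $P'=P$, so $P$ is the unique minimizer of \eqref{Sdyn}. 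An appeal to Proposition \ref{res-03} transfers uniqueness to \eqref{S}, with minimizer $P_{01}$.

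The main obstacle I anticipate is the measure-theoretic bookkeeping inside the chain-rule identity when $f_0$ or $g_1$ may vanish: one must verify that the splitting $\IX \log f_0\,d\mu_0 + \IX \log g_1\,d\mu_1$ is unambiguously defined rather than an indeterminate $\infty - \infty$ expression, and that the integrations commute as expected. The elementary bound $x\log x \ge -1/e$ together with the integrability hypotheses (and the convention $0\log 0 = 0$) control the negative parts of both integrals on $\{0<f_0<1\}$ and $\{0<g_1<1\}$, while the hypotheses supply integrability of the positive parts, making the decomposition licit and the proof complete.
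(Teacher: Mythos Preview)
Your argument is correct in substance and gives a genuinely different, more self-contained proof than the paper's. The paper simply observes that the two integrability conditions amount to $H(P|R)<\infty$, invokes an external result \cite[Thm.\,5.1]{Leo01b} to conclude that $P_{01}$ solves the static problem \eqref{S}, and then lifts to \eqref{Sdyn} via Proposition \ref{res-03}. You run the logic in the opposite direction: you establish optimality of $P$ for \eqref{Sdyn} directly by the Pythagorean identity $H(P'|R)=H(P'|P)+H(P|R)$ for competitors $P'$ with the same endpoint marginals, and then project down to \eqref{S} through Proposition \ref{res-03}. This is precisely the Csisz\'ar $I$-projection mechanism, and it has the advantage of being elementary and not relying on the abstract minimizer characterization of \cite{Leo01b}; the paper's route, on the other hand, places the result inside a broader convex-analytic framework.

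One small caution on your final paragraph: the bound $x\log x\ge -1/e$ applied to $f_0\log f_0$ only yields $(f_0 g_0\log f_0)^-\le g_0/e$, and $\IX g_0\,dm=E_R[g_1(X_1)]$ need not be finite when $m$ is unbounded. So that particular device does not, by itself, rule out $\IX f_0 g_0\log f_0\,dm=-\infty$. The cleanest fix is to read the hypotheses as asserting that both integrals are finite real numbers (which is how the paper uses them when it writes ``the pair of finite integrals is equivalent to $H(P|R)<\infty$''); alternatively, appeal to the appendix fact that $H(P|R)>-\infty$ once $\IX W\,dP<\infty$ for some $W$ satisfying \eqref{eq-02}, which together with finiteness of the positive parts forces finiteness of the negative parts as well. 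Either way, the decomposition is then licit and your chain-rule identity goes through as written.
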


\begin{proof}
The pair of finite integrals is equivalent to $H(P|R)<\infty.$ Hence, the statement about \eqref{S} is a direct consequence of \cite[Thm. 5.1]{Leo01b}. Its corollary about $\eqref{Sdyn}$ follows as for Theorem \ref{res-09}.
\end{proof}
Note that there may exist solutions of \eqref{Sdyn} which are not $(f,g)$-transforms of $R$. This happens when the support of the solution is not  a rectangle (i.e.\,the product of Borel subsets), see \cite[\S2]{FG97} or \cite[\S5]{Leo01b}.
\\
Next result extends the product formulas \eqref{eq-21} to all $t\in\ii.$

\begin{theorem}[Euclidean analogue of Born's formula]\label{res-11}
The path measure $P=f_0(X_0)g_1(X_1)\, R$ is Markov and for each $0\le t\le1,$ its time marginal $P_t\in\PX$ is given by
\begin{equation}\label{eq-16}
P_t=f_tg_t \,m.
\end{equation}
\end{theorem}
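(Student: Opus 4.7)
The plan is to exploit the Markov property of the reference measure $R$ together with the tensor-product shape of the Radon--Nikodym density $dP/dR = f_0(X_0)\,g_1(X_1)$, whose two factors are respectively past-measurable and future-measurable relative to any intermediate time $t\in\ii$. Both conclusions (the marginal formula and the Markov property of $P$) will fall out of the factorisation $E_R[f_0(X_0)g_1(X_1)\mid X_t] = f_t(X_t)\,g_t(X_t)$ delivered by the Markov property of $R$.

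First I would establish the marginal formula \eqref{eq-16}. For any bounded measurable $u:\XX\to\RR$, writing $E_P$ for expectation under $P$,
\begin{align*}
E_P[u(X_t)]
&= E_R\bigl[u(X_t)\,f_0(X_0)\,g_1(X_1)\bigr] \\
&= E_R\Bigl[u(X_t)\,E_R\bigl[f_0(X_0)\,g_1(X_1)\,\big|\,X_t\bigr]\Bigr].
\end{align*}
Since $R$ is Markov, conditionally on $X_t$ the past observable $f_0(X_0)$ and the future observable $g_1(X_1)$ are $R$-independent, so that $E_R[f_0(X_0)g_1(X_1)\mid X_t] = f_t(X_t)\,g_t(X_t)$ by \eqref{eq-20}. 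Combined with $R_t=m$ (from the $m$-reversibility of $R$), this immediately yields $P_t = f_t g_t\,m$.

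Next I would turn to the Markov property of $P$, using its time-symmetric formulation. For bounded measurable functionals $F$ of $X_{[0,t]}$ and $G$ of $X_{[t,1]}$,
\begin{align*}
E_P\bigl[F\,G\,\big|\,X_t\bigr]
&= \frac{E_R\bigl[F\,G\,f_0(X_0)\,g_1(X_1)\,\big|\,X_t\bigr]}{f_t(X_t)\,g_t(X_t)} \\
&= \frac{E_R[F\,f_0(X_0)\mid X_t]}{f_t(X_t)} \cdot \frac{E_R[G\,g_1(X_1)\mid X_t]}{g_t(X_t)},
\end{align*}
where the factorisation on the second line comes from applying the Markov property of $R$ to the product of a $\sigma(X_{[0,t]})$-measurable quantity $F\,f_0(X_0)$ and a $\sigma(X_{[t,1]})$-measurable quantity $G\,g_1(X_1)$. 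Specializing $F\equiv 1$ (resp.\ $G\equiv 1$) identifies the second (resp.\ first) fraction as $E_P[G\mid X_t]$ (resp.\ $E_P[F\mid X_t]$), proving the conditional independence of past and future under $P$ given $X_t$.

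The main subtlety, and hence the main obstacle, is measurability and finiteness: since $f_0$ and $g_1$ are only nonnegative and not assumed $R$-integrable, the conditional expectations $E_R[f_0(X_0)\mid X_t]$ and $E_R[g_1(X_1)\mid X_t]$ must be interpreted in the positive-integration framework of \cite{Leo12b} and may a priori take the value $+\infty$. The denominators $f_t(X_t)g_t(X_t)$ are however $P$-a.s.\ finite and positive by the marginal formula itself, so the ratios above are well defined $P$-a.s.; the $R$-factorisation under conditioning on $X_t$ then extends from the bounded case to the nonnegative case by monotone approximation.
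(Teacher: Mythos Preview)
Your proposal is correct and follows essentially the same route as the paper's ``alternate simple proof'': both factor $E_R[f_0(X_0)g_1(X_1)\mid X_t]=f_t(X_t)g_t(X_t)$ via the Markov property of $R$, derive the marginal formula from this, and establish the Markov property of $P$ by computing $E_P[FG\mid X_t]$ as a ratio and identifying each factor by setting $F\equiv 1$ or $G\equiv 1$. The only cosmetic differences are the order of the two steps and your use of test functions rather than the Radon--Nikodym derivative for the marginal; your closing remarks on the positive-integration subtlety are also in line with the paper's own comments.
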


\begin{remark}
It follows with \eqref{eq-16} that for all $t\in\ii,$
$
0<E_R(f_0(X_0)\mid X_t),\ E_R(g_1(X_1)\mid X_t)<\infty,$ $P\ae,$
but not $R\ae$ in general.
\end{remark}

\begin{proof}[Proof of Theorem \ref{res-11}]
The Markov property of $P$ is a direct consequence of Theorem \ref{res-12} and Proposition \ref{res-16}. 
\\
We propose an alternate simple proof.
To prove  that $P$ is Markov, we show that for each $0\le t\le 1$ and  any bounded measurable functions $a\in \sigma(X _{[0,t]})$, $b\in \sigma(X _{[t,1]})$, we have
\begin{equation*}
E _{P}(ab\mid X_t)=E _{P}(a\mid X_t) E _{P}(b\mid X_t).
\end{equation*}
Indeed,   we have
\begin{eqnarray*}
E _{P}(ab\mid X_t)&\overset{\textrm{(i)}}{=}& \frac{E_R(f_0(X_0)abg_1(X_1)\mid X_t)}{E_R(f_0(X_0)g_1(X_1)\mid X_t)} 
	\overset{\textrm{(ii)}}{=} \frac{E_R(f_0(X_0)a\mid X_t)E_R(bg_1(X_1)\mid X_t)}{E_R(f_0(X_0)\mid X_t)E_R(g_1(X_1)\mid X_t)} \\
&\overset{\textrm{(iii)}}{=}& E _{P}(a\mid X_t)E _{P}(b\mid X_t),\quad P\ae,
\end{eqnarray*}
which is the desired result. Equality (i) is a general result about conditioning; note that we do not divide by zero $P\as$ Equality (ii) uses crucially the assumed Markov property of $R$ and one obtains (iii) by considering separately the cases when $b\equiv 1$ and $a\equiv 1$ in the just obtained identity to see that $E _{P}(a\mid X_t)=\frac{E_R(f_0(X_0)a\mid X_t)}{E_R(f_0(X_0)\mid X_t)}$ and $E _{P}(b\mid X_t)=\frac{E_R(bg_1(X_1)\mid X_t)}{E_R(g_1(X_1)\mid X_t)}$.

Finally, to prove \eqref{eq-16}, remark that
\begin{eqnarray*}
\frac{dP_t}{dm}(X_t)&=&\frac{dP_t}{dR_t}(X_t)=E_R \left(\frac{dP}{dR}\mid X_t\right)
	=E_R(f_0(X_0)g_1(X_1)\mid X_t)\\
	&\overset{\checkmark}=& E_R(f_0(X_0)\mid X_t) E_R(g_1(X_1)\mid X_t)
	=: f_t(X_t)g_t(X_t)
\end{eqnarray*}
where we used the Markov property of $R$ at the marked equality.
\end{proof}

\subsection*{Forward and backward generators}

Let $Q\in\MO$ be a Markov measure. Its \emph{forward stochastic derivative} $\partial+\Lf^Q$ is defined by 
$$
[\partial_t+\Lf^Q_t](u)(t,x):=\lim _{h\downarrow 0}h ^{-1} E_Q\big(u(t+h,X _{t+h})-u(t,X_t)\mid X_t=x\big)
$$ 
for any measurable function $u:\iX\to\RR$ in the set $\dom\Lf^Q$ for which this limit exists $Q_t\ae$ for all $0\le t< 1$. In fact this definition is only approximate, we give it here as a support for understanding the relations between the forward and backward generators. For a precise statement see \cite[\S 2]{Leo11b}. Since the time reversed $Q^*$ of $Q$ is still Markov, $Q$ admits a \emph{backward stochastic derivative}  $-\partial+\Lb^Q$ which  is defined by 
$$
[-\partial_t+\Lb^Q_t]u(t,x):=\lim _{h\downarrow 0} h ^{-1} E_Q\big(u(t-h,X _{t-h})-u(t,X_t)\mid X_t=x\big)
$$ 
for any measurable function $u:\iX\to\RR$ in the set $\dom\Lb^Q$ for which this limit exists $Q_t\ae$ for all $0< t\le 1$. Remark that 
$
\Lb^Q_t=\Lf ^{Q^*}_{1-t},$ $0\le t\le 1.$

It is proved in  \cite[\S 2]{Leo11b} that these stochastic derivatives are extensions of the extended forward and backward generators  of $Q$ in the sense of semimartingales. In particular, they offer us a natural way for computing  generators. Later on, we shall call $\Lf^Q$ and $\Lb^Q$ generators, rather than stochastic derivatives.

For simplicity, we denote $\Lf^R=\Lb^R=L$ without the superscript $R$ and without the time arrows, since $R$ is assumed to be reversible. We also write $\Af=\Lf^P$ and $\Ab=\Lb^P$ the generators of the $(f_0,g_1)$-transform $P$ defined by  \eqref{eq-17}.

Stochastic derivatives have been introduced by E.~Nelson in \cite{Nel67} while studying the dynamical properties of the Brownian motion. The above definition (more precisely the one of \cite{Leo11b}), which is an extension of Nelson's one, is necessary for technical reasons.

\subsection*{The dynamics of the $(f,g)$-transform}

To give the expressions of the generators $\Af$ and $\Ab,$ we need to introduce the carré du champ of $R$. It is defined  for any functions $u,v$ on $\XX$ such that $u,v$ and $uv$ are in $\dom L$,    by
\begin{equation*}
\Gamma(u,v):= L(uv)-uLv-vLu.
\end{equation*}
In general, the forward and backward generators $(\partial_t+\Af_t)_{0\le t\le1}$ and $(-\partial_t+\Ab_t)_{0\le t\le1}$ of $P$  depend explicitly on $t$. The following informal statement  is known for long in specific situations.  In the  important examples which are discussed below at Section \ref{sec-standard} below, these claims are easy consequences of It\^o's formula (for instance see  \cite[Ch.\,8,\S\,3]{RY99} in the continuous diffusion case).

\begin{statement}\label{res-10}
Under some hypotheses on $R$,  the forward and backward generators of $P$ are given for any function $u:\iX\to\RR$ belonging to some class $\mathcal{U}_R$ of regular functions, by
\begin{equation}\label{eq-19}
\left\{
\begin{array}{lcll}
\Af_t u(x)&=&\displaystyle{L u(x)+\frac{\Gamma(g_t,u)(x)}{g_t(x)}},& (t,x)\in [0,1)\times\XX\\
\Ab_t u(x)&=&\displaystyle{L u(x)+\frac{\Gamma(f_t,u)(x)}{f_t(x)}},& (t,x)\in (0,1]\times\XX
\end{array}
\right.
\end{equation}
where $f_t,g_t$ are defined at \eqref{eq-20}.
\\
Because of \eqref{eq-16}, for any $t$ no division by zero occurs $P_t\ae$ 
\end{statement}

Rigorous statement and proof are given in \cite{Leo11b} for instance.

\begin{proof}[Idea of  proof of Statement \ref{res-10}] To obtain the forward generator of $P$, we are going to compute the stochastic derivative $\Af_t u(x):=\lim _{h\downarrow 0}h ^{-1}E_P[u(X _{t+h})-u(X_t)\mid X_t=x].$ Let us denote for simplicity $F_0=f_0(X_0),$ $G_t=g_t(X_t)$ and $U_t=u(X_t)$. We have 
\begin{eqnarray*}
E_P[U _{t+h}-U_t\mid X_t=x]
	&=&\frac{E_R[F_0(U _{t+h}-U_t)G_1\mid X_t=x]}{E_R[F_0G_1\mid X_t=x]}\\&=&\frac{E_R[F_0\mid X_t=x]E_R[(U _{t+h}-U_t)G_1\mid X_t=x]}{E_R[F_0\mid X_t=x]E_R[G_1\mid X_t=x]}\\
&=& \frac{E_R[(U _{t+h}-U_t)G_1\mid X_t=x]}{g_t(x)},
\end{eqnarray*}
where the Markov property of $R$ is used at second identity.
But,
\begin{eqnarray*}
&&E_R[(U _{t+h}-U_t)G_1\mid X_t=x]\\
	&=&E_R[(U _{t+h}-U_t)G _{t+h}\mid X_t=x]\\
	&=&g_t(x)E_R[(U _{t+h}-U_t)\mid X_t=x]+E_R[(U _{t+h}-U_t)(G _{t+h}-G_t)\mid X_t=x],
\end{eqnarray*}
where the first equality is a martingale identity.
We conclude  by means of the definition of $L$: 
$\lim _{h\downarrow 0}h ^{-1}E_R[(U _{t+h}-U_t)\mid X_t=x]=:Lu(x)$, and  with the following identity $\lim _{h\downarrow 0}h ^{-1}E_R[(U _{t+h}-U_t)(G _{t+h}-G_t)\mid X_t=x]=\Gamma(u,g_t)(x).$
\end{proof}

One sees that it is necessary that the functions $f_t$ and $g_t$ are regular enough  to be in the domains of the carré du champ operators. For instance, choosing  
$f_0,g_1\in\dom L$ insures that $f\in \dom(-\partial+L)$ and $g\in \dom(\partial+L)$   and also that $f$ and $g$ are \emph{classical solutions} of the following parabolic PDEs
\begin{equation}\label{eq-18}	
\left\{
\begin{array}{ll}
(-\partial_t +L) f(t,x)=0, &0<t\le 1,\\
f_0,	&t=0,
\end{array}\right.
\qquad
\left\{
\begin{array}{ll}
(\partial_t +L) g(t,x)=0, &0\le t<1,\\
g_1,	&t=1.
\end{array}\right.
\end{equation}
Even better, since $R$ is assumed to be $m$-reversible, its Markov generator $L$ is self-adjoint on $L^2(m)$ and for any $f_0,g_1$ in $L^2(m)$ we have $f\in \dom(-\partial+L)$ and $g\in \dom(\partial+L)$.
\\
It is worthwhile describing the dynamics  \eqref{eq-19} in terms of
\begin{equation}\label{eq-56}
\left\{
\begin{array}{l}
\varphi:=\log f,\\ \psi:=\log g.
\end{array}
\right.
\end{equation}
Remark  that because of \eqref{eq-16}, for any $0\le t\le 1,$ $\varphi_t$ and $\psi_t$ are well defined $P_t\ae$
In analogy with the Kantorovich potentials which appear in the optimal transport theory, we call $\varphi$ and $\psi$ the \emph{Schr\"odinger potentials}. They are  solutions of the ``second order''\footnote{When $R$ is a random walk on a discrete space for instance, then the term ``second order'' is only justified in analogy with the continuous case, see \eqref{eq-34}.} Hamilton-Jacobi equations 
\begin{equation}\label{eq-22a}
\left\{
\begin{array}{ll}
(-\partial_t +B) \varphi(t,x)=0, &0<t\le 1,\quad P_t\ae\\
\varphi_0=\log f_0,	&t=0,
\end{array}\right.
\end{equation}
and \begin{equation}\label{eq-22b}
\left\{
\begin{array}{ll}
(\partial_t +B) \psi(t,x)=0, &0\le t<1,\quad P_t\ae\\
\psi_1=\log g_1,	&t=1,
\end{array}\right.
\end{equation}
where the non-linear operator $B$ is defined by
\begin{equation*}
B u:= e ^{-u}L e^u
\end{equation*}
for any function $u$ such that $e^u\in\dom L$.

\section{Standard examples}\label{sec-standard}
We present two well-known reference processes: the reversible Brownian motion and a reversible random walk on a graph. We also apply  the above general results to these important examples. 

\subsection*{Reversible Brownian motion}

The reversible Brownian motion $R$ on $\XX=\RR^n$  is specified by
\begin{equation}\label{eq-51}
\left\{
\begin{array}{l}
L=\Delta/2,\\
R_0(dx)=m(dx)=dx
\end{array}
\right.
\end{equation}
where the Markov generator $L=\Delta/2$ is defined on $\mathcal{C}^2(\RR^n)$. It is easily checked that $R$ is $m$-reversible. 

Let $P=f_0(X_0)g_1(X_1)\,R\in\PO$ be any $(f,g)$-transform of  $R$.
By the regularity improving property of the heat kernel,  $(f_0,g_1)$ is such that $f\in\dom(-\partial_t+\Delta/2)$ for  $t\in(0,1]$ and $g\in\dom(\partial_t+\Delta/2)$ for  $t\in [0,1).$
We have
$Bu=\Delta u/2+|\nabla u|^2/2,$ $\Gamma(u,v)=\nabla u\cdot\nabla v$ for any $u,v\in \mathcal{C}^2(\XX).$ The expressions \eqref{eq-19}
\begin{equation}\label{eq-26}
\left\{\begin{array}{lcl}
\Af_t&=&\Delta/2+\nabla \psi_t\cdot\nabla\\
\Ab_t&=&\Delta/2+\nabla \varphi_t\cdot\nabla
\end{array}\right.
\end{equation}
of the forward and backward generators tell us that the density $\mu_t(x):=d P_t/dx$ solves the following parabolic PDEs
\begin{equation*}	
\left\{
\begin{array}{ll}
(\partial_t-\Delta/2) \mu_t(x)+\nabla\cdot(\mu_t\nabla \psi_t)(x)=0, &(t,x)\in (0,1]\times\XX\\
\mu_0,	&t=0,
\end{array}\right.
\end{equation*} 
where $\psi$ solves \eqref{eq-22b}:
\begin{equation}\label{eq-28}
\left\{
\begin{array}{ll}
(\partial_t +\Delta/2) \psi_t(x)+|\nabla \psi_t(x)|^2/2=0, &(t,x)\in [0,1)\times\XX\\
\psi_1=\log g_1,	&t=1,
\end{array}\right.
\end{equation}
and in the reversed sense of time
\begin{equation*}	
\left\{
\begin{array}{ll}
(-\partial_t-\Delta/2) \mu_t(x)+\nabla\cdot(\mu_t\nabla\varphi_t)(x)=0, &(t,x)\in [0,1)\times\XX\\
\mu_1,	&t=1,
\end{array}\right.
\end{equation*}
where $\varphi$  solves \eqref{eq-22a}:
\begin{equation*}	
\left\{
\begin{array}{ll}
(-\partial_t +\Delta/2) \varphi_t(x)+|\nabla \varphi_t(x)|^2/2=0, &(t,x)\in (0,1]\times\XX\\
\varphi_0=\log f_0,	&t=0.
\end{array}\right.
\end{equation*} 
It is important to note the smoothing effect of the semigroup of $R$ which allows us  to define the \emph{classical gradients} $\nabla \psi_t$ and $\nabla \varphi_t$ for all $t$ in $[0,1)$ and $(0,1]$ respectively.
They are  the forward and backward drift vector fields of the canonical process under $P.$ Also recall that, as a direct consequence of \eqref{eq-16}, no logarithm of zero is taken, $P_t$-almost surely, i.e.\ almost everywhere, and we have the time-reversal formula
\begin{equation*}
\nabla \psi_t+\nabla \varphi_t=\nabla\log \mu_t,\quad 0<t<1.
\end{equation*}

\subsubsection*{Back to Schr\"odinger problem}

Under the assumption of Theorem \ref{res-12},   $P=f_0(X_0)g_1(X_1)\,R$ solves \eqref{Sdyn} with the prescribed marginals given at \eqref{eq-21}. It is  a $(f,g)$-transform of $R,$ and we have just seen that there exist $\varphi$ and $\psi$ such that its forward and backward generators are given by \eqref{eq-26}.

\subsubsection*{Minimal action}

We derive the analogue of Benamou-Brenier formula \eqref{eq-38}.
Consider the problem of minimizing the average kinetic action
\begin{equation}\label{eq-27}
\int _{\ii\times \RR^n}\frac{|v_t(x)|^2}{2}\, \nu_t(dx)dt \to \textrm{min}
\end{equation}
among all  $(\nu,v)$ where $\nu=(\nu_t)_{0\le t\le 1}$ is a measurable path in $\mathrm{P}(\RR^n),$ $\big(v_t(x)\big) _{(t,x)\in \ii\times\RR^n}$ is a measurable $\RR^n$-valued vector field and the following constraints are satisfied:
\begin{equation}\label{eq-29}
\left\{\begin{array}{l}
(\partial_t-\Delta/2)\nu+\nabla\scal \big(\nu v\big)=0,\quad \textrm{on } (0,1)\ttimes \RR^n\\
\nu_0=\mu_0,\ \nu_1=\mu_1
\end{array}\right.
\end{equation}
where this evolution equation is meant in the following weak sense: for any  function $u\in \mathcal{C}^{1,2}_o((0,1)\times \RR^n),$ we have $\int _{(0,1)\times\RR^n}(\partial_t +\Delta/2+v\cdot\nabla)u(t,x)\,\nu_t(dx)dt=0.$

\begin{proposition}\label{res-13}
Let $\mu_0, \mu_1\in \mathrm{P}_2(\RR^n)$ be such that $H(\mu_0|\mathrm{Leb}), H(\mu_1|\mathrm{Leb})<\infty.$ Then, 
 \eqref{sdyn} has a unique solution  $\Ph\in\PO$ and the unique solution to the minimal action problem \eqref{eq-27} is $((\mu_t)_{t\in\ii},\nabla \psi)$ where 
$$
\mu_t=\Ph_t,\quad t\in\ii,
$$
and 
$$\psi_t(x)=\log E_R[g_1(X_1)\mid X_t=x],\quad (t,x)\in[0,1)\ttimes\RR^n$$ with $g_1$  a solution of \eqref{eq-13}. Moreover, $\psi$ is the unique classical solution of the Hamilton-Jacobi-Bellman equation \eqref{eq-28} and
\begin{eqnarray}\label{eq-30}
&&\inf \left\{H(\pi|R _{01});\pi\in\PXX: \pi_0=\mu_0,\pi_1=\mu_1\right\}-H(\mu_0|m)\nonumber\\
&=& \inf \left\{\IX H(\pi^x\mid R^x_1)\,\mu_0(dx);(\pi^x)_{x\in\XX}: \IX \pi^x(\cdot)\,\mu_0(dx)=\mu_1\right\}\nonumber\\
&=&  \int _{\ii\times \RR^n}\frac{|\nabla \psi_t(x)|^2}{2}\, \mu_t(dx)dt \\
&=& \inf \left\{\int _{\ii\times \RR^n}\frac{|v_t(x)|^2}{2}\, \nu_t(dx)dt ; (\nu,v): (\nu,v) \textrm{ satisfies }\eqref{eq-29}\right\}. \nonumber
\end{eqnarray}
where $R^x_1:=R(X_1\in\cdot|X_0=x)\in\PX$ and $(\pi^x)_{ x\in\XX}\in\PX ^{\XX}$ is a measurable Markov kernel.
\end{proposition}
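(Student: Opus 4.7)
The plan is to identify $\Ph$ and the potential $\psi$ using the structural results of Section 2, then translate the identity $H(\Ph|R)-H(\mu_0|m)$ into a kinetic action via an integration by parts against the Hamilton-Jacobi-Bellman flow, and finally close the chain \eqref{eq-30} from both sides with a Girsanov-type lower bound.

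First, I apply Theorem \ref{res-17} (or already Theorem \ref{res-09}) to the reversible Brownian motion \eqref{eq-51}: thanks to the Gaussian form \eqref{eq-40} of $R_{01}$, one can take $A(x)=B(x)=\alpha(1+|x|^2)$ in the hypotheses (iv)--(v), and then the finite-moment/finite-entropy conditions on $\mu_0,\mu_1$ imply (vi)--(vii); the Brownian regenerative set is $\XX_o=\RR^n$. This produces the unique Markov solution $\Ph=f_0(X_0)g_1(X_1)R$, with $f_0,g_1$ nonnegative solutions of the Schrödinger system \eqref{eq-13}. Strict positivity of the heat kernel forces $g_t(x)=E_R[g_1(X_1)\mid X_t=x]>0$ and smooth on $[0,1)\ttimes\RR^n$, so $\psi_t=\log g_t$ is smooth there; a direct differentiation (or invocation of \eqref{eq-22b} together with $Bu=\Delta u/2+|\nabla u|^2/2$) gives the HJB equation \eqref{eq-28}, and its classical solution is unique by standard parabolic comparison.

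Second, I compute the entropic gap. From $d\Ph/dR=f_0(X_0)g_1(X_1)$ and Theorem \ref{res-11} ($\mu_t=f_tg_t\,m$ with $\mu_0=f_0g_0m$),
\begin{equation*}
H(\Ph|R)-H(\mu_0|m)=\IX\psi_1\,d\mu_1-\IX\psi_0\,d\mu_0.
\end{equation*}
Differentiating $t\mapsto\IX\psi_t\,d\mu_t$ and using \eqref{eq-28} for $\psi_t$ together with the Fokker-Planck equation $\partial_t\mu_t-\Delta\mu_t/2+\nabla\ccdot(\mu_t\nabla\psi_t)=0$, two integrations by parts cause the $\Delta\psi$ terms to cancel and leave $\frac{d}{dt}\IX\psi_t\,d\mu_t=\IX|\nabla\psi_t|^2/2\,d\mu_t$. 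Integrating in $t$ yields the middle equality of \eqref{eq-30}. The first equality of \eqref{eq-30} is the additivity formula \eqref{eq-10} applied to the projection $(X_0,X_1)\mapsto X_0$: under $\pi_0=\mu_0$ the term $H(\pi_0|R_0)$ equals $H(\mu_0|m)$, the disintegration $\pi=\mu_0\otimes\pi^{\scriptscriptstyle\bullet}$ satisfies $\pi_1=\mu_1\Leftrightarrow\IX\pi^x\,\mu_0(dx)=\mu_1$, and $R_{01}$ disintegrates as $m\otimes R^{\scriptscriptstyle\bullet}_1$, whence the rewritten infimum.

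Third, for the last equality of \eqref{eq-30}: admissibility of $(\mu,\nabla\psi)$ in \eqref{eq-29} is immediate from the expression \eqref{eq-26} of the forward generator $\Af$ of $\Ph$. For the lower bound, to any admissible competitor $(\nu,v)$ with $\int_{\iX}|v_t|^2\,d\nu_t dt<\infty$ I associate the weak solution $P\in\PO$ of the SDE $dY_t=v_t(Y_t)\,dt+dB_t$ with $Y_0\sim\mu_0$; $P$ has marginals $P_t=\nu_t$, in particular $P_0=\mu_0,P_1=\mu_1$, so it is admissible for \eqref{sdyn}. Girsanov's formula extended to the $\sigma$-finite reference $R$ (decomposing $R=\int\mathcal W_x\,dx$ and applying the usual Girsanov identity under each $\mathcal W_x$) gives
\begin{equation*}
H(P|R)=H(\mu_0|\mathrm{Leb})+\ud\int_{\iX}|v_t(x)|^2\,\nu_t(dx)dt.
\end{equation*}
Minimality of $\Ph$ in \eqref{sdyn} then yields $\int|v|^2/2\,d\nu_t dt\ge\int|\nabla\psi|^2/2\,d\mu_t dt$, with equality only when $P=\Ph$, i.e.\ $v=\nabla\psi$ $\mu_t$-a.e.; this also proves uniqueness of the minimizer of \eqref{eq-27}.

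The main technical obstacle will be justifying the integration by parts in the $\frac{d}{dt}\int\psi_t\,d\mu_t$ computation of step two: the densities and their gradients need enough decay at infinity to discard boundary terms, which is not automatic for arbitrary $\mu_0,\mu_1\in\mathrm{P}_2(\RR^n)$ with finite entropy. The standard remedy is to approximate $(\mu_0,\mu_1)$ by smooth Gaussian convolutions $(\mu_0^\epsilon,\mu_1^\epsilon)$, carry out the computation in the regularized setting where $g^\epsilon,\mu^\epsilon,\nabla\psi^\epsilon$ are globally smooth with Gaussian decay, and pass to the limit using lower semicontinuity of $H(\cdot|R)$ and of the kinetic action. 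A secondary technical point is the Girsanov justification for the unbounded reference $R$, which is handled by the same disintegration over the initial Lebesgue marginal.
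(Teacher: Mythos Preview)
Your argument is essentially correct, but it diverges from the paper's proof in one strategically important place: the derivation of the middle equality in \eqref{eq-30}. You compute $H(\Ph|R)-H(\mu_0|m)=\int\psi_1\,d\mu_1-\int\psi_0\,d\mu_0$ and then differentiate $t\mapsto\int\psi_t\,d\mu_t$ along the flow, using the HJB equation and Fokker--Planck together with two integrations by parts. The paper bypasses this entirely: it applies the Girsanov entropy identity \eqref{eq-46xx} \emph{directly to} $\Ph$, whose forward drift is exactly $\nabla\psi$ by \eqref{eq-26}, obtaining $H(\Ph|R)=H(\mu_0|m)+\int_{\ii\times\RR^n}|\nabla\psi_t|^2/2\,d\mu_t dt$ in one stroke. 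This is cleaner precisely because it sidesteps the technical obstacle you yourself flag at the end (decay of $\mu_t$, $\nabla\psi_t$ at infinity to justify the integration by parts); no approximation by $(\mu_0^\epsilon,\mu_1^\epsilon)$ is needed.

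Two smaller points. First, in your lower-bound step you produce $P$ as ``the weak solution of $dY_t=v_t(Y_t)\,dt+dB_t$'', but weak existence under merely measurable $v$ with $\int|v|^2\,d\nu_tdt<\infty$ is not free; the paper invokes \cite{CL95} (see also \cite{Mika90}) for exactly this superposition-type construction, and you should too. Second, your uniqueness argument for $v$ (``$P=\Ph$ hence $v=\nabla\psi$'') is correct but implicitly uses uniqueness of the Girsanov drift; the paper instead gives a self-contained $L^2$ argument: any admissible $v'$ with the same marginals $\mu$ differs from $\nabla\psi$ by a vector field orthogonal in $L^2(\mu_tdt)$ to $\nabla C_o^\infty$, and since $\nabla\psi$ lies in the closure of gradients it is the minimal-norm representative.
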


\begin{proof}
The integrability assumptions on $\mu_0$ and $\mu_1$ ensure that the hypotheses of Proposition \ref{res-04} are satisfied. Therefore, \eqref{sdyn} admits a unique solution.
\\
As a consequence of Girsanov's theory, for any  $Q\in\PO$  such that $H(Q|R)<\infty,$ there exists some predictable $\Rn$-valued drift field $\beta$ such that 
\begin{enumerate}[(i)]
\item
$E_Q\Iii|\beta_t|^2\,dt<\infty$,
\item
$Q$ solves the martingale problem associated with the forward generator\\ $(\partial_t+\Delta/2+\beta_t\cdot\nabla)_{0\le t\le 1}$
\item
and $H(Q|R)$ is given by
\begin{equation}\label{eq-46xx}
H(Q|R)=H(Q_0|m)+E_Q\Iii\frac{|\beta_t|^2}{2}\,dt.
\end{equation}
\end{enumerate}
For a proof with an analytic flavour, see for instance \cite{Leo11a}.

It is proved in \cite{CL95} (see also \cite{Mika90} for a related result) that when $(\nu,v)$ satisfies $(\partial_t-\Delta/2)\nu+\nabla\scal \big(\nu v\big)=0$ on  $(0,1)\times \RR^n,$ and $\int _{\ii\times \RR^n}|v_t(x)|^2/2\, \nu_t(dx)dt<\infty,$ then there exists a path measure $Q\in\PO$  such that 
\begin{equation}\label{eq-35}
Q_t=\nu_t,\quad \forall 0\le t\le1,
\end{equation}
$Q$ solves the martingale problem associated with the forward stochastic derivative $(\partial_t+\Delta/2+v_t\cdot\nabla)_{0\le t\le 1}$ and, by  \eqref{eq-46xx}
\begin{equation}\label{eq-47}
H(Q|R)=H(\mu_0|m)+\int _{\ii\times \RR^n}\frac{|v_t(x)|^2}{2}\, \nu_t(dx)dt<\infty
\end{equation}
with $Q_0=\mu_0$ and $Q_1=\mu_1.$ Therefore, to any $(\nu,v)$ which satisfies\\ $\int _{\ii\times \RR^n}|v_t(x)|^2/2\, \nu_t(dx)dt<\infty$ and \eqref{eq-29},  one can associate some $Q\in\PO$ which  verifies \eqref{eq-35} and \eqref{eq-47}.
 Since
\begin{equation*}
\inf \eqref{S}=\inf \eqref{Sdyn}=H(\Ph|R)=H(\mu_0|m)+\IiX \frac{|\nabla \psi_t(x)|^2}{2}\,\mu_t(dx)dt
\end{equation*}
with $\mu_t:=\Ph_t$ and $\psi$ given at \eqref{eq-26}, and since $\Ph$ is the unique solution to \eqref{Sdyn}, we have proved that identity \eqref{eq-30} holds true,  $(\mu,\nabla \psi)$ solves \eqref{eq-27} and  $\mu$ is  unique in the sense that if $(\nu,v)$ and $(\nu',v')$ are solutions, then $\nu=\nu'=\mu.$ It remains to check that $\nabla \psi$ is also unique. The following property of $\mu$ holds with $v=\nabla \psi:$ 
$$
\int _{(0,1)\times\RR^n}(\partial_t +\Delta/2+v\cdot\nabla)u\,d\mu_tdt=0,\quad\forall u\in \mathcal{C}_o^\infty((0,1)\times \RR^n).
$$ 
Suppose that it is also verified with $v'.$ Subtracting, we obtain 
$
\int _{(0,1)\times\RR^n}(v'-\nabla \psi)\cdot\nabla u\,d\mu_tdt=0,$ $\forall u\in \mathcal{C}_o^\infty((0,1)\times \RR^n),
$
meaning that $v'-\nabla \psi$ is orthogonal in $L^2_{\RR^n}((0,1)\times\RR^n,d \mu_tdt)$ to $\nabla \mathcal{C}_o^\infty((0,1)\times \RR^n).$ It follows that the squared norm $\|v'\|^2:=\int _{\ii\times \RR^n}|v'_t(x)|^2\, \mu_t(dx)dt$ in $L^2_{\RR^n}((0,1)\times\RR^n,d \mu_tdt)$ is minimal at the orthogonal projection of $\nabla \psi$ on the closure of $\nabla \mathcal{C}_o^\infty((0,1)\times \RR^n).$ Of course, this projection is $\nabla \psi$ itself.
\end{proof}

Clearly, there is also a backward version of Proposition \ref{res-13}.

\subsection*{Reversible random walk on a graph} Let $R$ be a random walk on a countable connected graph $(\XX,\sim)$ where $x\sim y$ means that $x$ and $y$ are next neighbours. Its generator is given for any finitely supported function $u$ by
\begin{equation}\label{eq-52}
Lu(x)=\sum _{y:y\sim x}[u(y)-u(x)]\,J_x(y),\quad x\in\XX
\end{equation}
where  $J_x(y)>0$ for all $x\sim y$ is interpreted as the average frequency of jumps from $x$ to $y.$ Its dual formulation is the current equation
\begin{equation*}
\partial_t \rho_t(x)=\sum _{y:y\sim x}[\rho_t(y)J_y(x)-\rho_t(x)J_x(y)],
\qquad x\in\XX, 0<t<1
\end{equation*}
where $\rho_t(x):=R(X_t=x).$ Clearly, the path measure $R$ admits the stationary measure $m\in\MX$ if and only if $\sum _{y:y\sim x}[m_yJ_y(x)-m_xJ_x(y)]=0,$ for all $x\in\XX.$ It admits $m$ as a reversing measure if this global equilibrium property is reinforced into the following detailed one:
\begin{equation*}
m_xJ_x(y)=m_yJ_y(x ),\quad \forall x,y\in\XX, x\sim y.
\end{equation*}
The special case where the jump measure 
$$J_x:=\sum _{y:y\sim x}J_x(y)\,\delta_y\in\MX$$ is equal to $J^o_x=\frac{1}{n_x}\sum _{y:y\sim x}\delta_y$ where 
$$
n_x:=\# \left\{y:y\sim x\right\}<\infty 
$$ 
is the number of neighbours of $x$ which is assumed to be finite for all $x,$ corresponds to the \emph{simple random walk} on the graph $(\XX,\sim).$ It is easily checked that its reversing measure is $m^o=\sum _{x\in\XX}n_x \delta_x\in\MX$ which is unbounded when $\XX$ is infinite.
\\
For simplicity we assume in the general case that 
\begin{equation*}
\sup _{x\in\XX}J_x(\XX)<\infty
\end{equation*}
where $J_x(\XX):=\sum _{y:y\sim x}J_x(y)$ is the global average frequency of jumps at $x$. This ensures that for any initial marginal $R_0\in\MX,$ there exists a Markov measure $R\in\MO$ with generator $L$. Moreover, any bounded function $u:\XX\to\RR$ is in the domain of $L.$
\\
Viewing $L=(L_{xy})_{x,y\in\XX}$ as a  matrix with $L _{xy}=\left\{\begin{array}{ll}
J_x(y), &\textrm{if }x\sim y;\\
-J_x(\XX), &\textrm{if }x=y;\\
0,&\textrm{otherwise}
\end{array}
\right.$ and the functions as column vectors indexed by $\XX,$ we observe that the solutions of the heat equations \eqref{eq-18} are
\begin{equation}\label{eq-23}
f_t=e ^{tL}f_0\qquad \textrm{and}\qquad g_t=e ^{(1-t)L}g_1,\quad 0\le t\le 1.
\end{equation}
It follows that for any couple $(f_0,g_1)$ of nonnegative functions and any $0\le t\le 1,$ $f_t\in\dom(-\partial_t+L)$ and $g_t\in\dom (\partial_t+L).$
\\
 We have $Bu(x)=Lu(x)+\sum _{y:y\sim x}\theta(u(y)-u(x))\,J_x(y)$ with 
\begin{equation}\label{eq-24}
\theta(a):=e^a-a-1,\quad a\in\RR,
\end{equation}
$\Gamma(u,v)(x)=\sum _{y:y\sim x}[u(y)-u(x)][v(y)-v(x)]\,J_x(y),$ for any bounded functions $u,v$ and any $x\in\XX.$ Let $P=f_0(X_0)g_1(X_1)\,R$ be any $(f,g)$-transform of the random walk $R.$ Applying \eqref{eq-19}, the forward and backward generators of $P$ turn out to be the jump generators associated respectively with the jump measures
\begin{equation*}
\left\{\begin{array}{lclcl}
\overrightarrow{J}_x&=&\displaystyle{\sum _{y:y\sim x} \frac{g_t(y)}{g_t(x)}J_x(y)\,\delta_y}
&=&\displaystyle{\sum _{y:y\sim x} \exp \big(\psi	_t(y)-\psi_t(x)\big)J_x(y)\,\delta_y}\\
\overleftarrow{J}_x&=&\displaystyle{\sum _{y:y\sim x} \frac{f_t(y)}{f_t(x)}J_x(y)\,\delta_y}
&=&\displaystyle{\sum _{y:y\sim x} \exp \big(\varphi	_t(y)-\varphi_t(x)\big)J_x(y)\,\delta_y}
\end{array}\right.
\end{equation*}
Again, no division by zero occurs $P_t\ae$ for every $0\le t\le1,$ i.e.\ everywhere for each $0<t<1.$ The functions $f$ and $g$ satisfy \eqref{eq-23} and the Schr\"odinger potentials $\psi$ and $\varphi$ satisfy \eqref{eq-22b}:
\begin{equation}\label{eq-34}
\left\{
\begin{array}{ll}
(\partial_t +L) \psi_t(x)+\sum _{y:y\sim x}\theta\big(\psi_t(y)-\psi_t(x)\big)\,J_x(y)=0, &(t,x)\in [0,1)\times\XX\\
\psi_1=\log g_1,	&t=1,
\end{array}\right.
\end{equation}
and \eqref{eq-22a}:
\begin{equation*}
\left\{
\begin{array}{ll}
(-\partial_t +L) \varphi_t(x)+\sum _{y:y\sim x}\theta\big(\varphi_t(y)-\varphi_t(x)\big)\,J_x(y)=0, &(t,x)\in (0,1]\times\XX\\
\varphi_0=\log f_0,	&t=0.
\end{array}\right.
\end{equation*}

\subsubsection*{Minimal action}

Let $\theta^*$ be the convex conjugate of $\theta$ defined at \eqref{eq-24}, i.e.
\begin{equation*}
\theta^*(b)=
\left\{\begin{array}{ll}
(b+1)\log(b+1)-b, & \textrm{if \ }b>-1\\
1,& \textrm{if \ }b=-1\\
+\infty,& \textrm{if \ }b<-1.
\end{array}\right.
\end{equation*}
Consider the problem of minimizing
\begin{equation}\label{eq-33}
\IiXX \theta^*\big(j(t,x;y)-1\big)\, \nu_t(dx)J_x(dy)dt\to \textrm{min}
\end{equation}
among all $(\nu,j)$ where $\nu=(\nu_t)_{0\le t\le 1}$ is a measurable path in $\PX,$ $j:\ii\times\XXX\to[0,\infty)$ is a measurable nonnegative function and the following constraints are satisfied:
\begin{equation}\label{eq-32}
\left\{\begin{array}{l}
\partial_t \nu_t(x)-\sum _{y:y\sim x}\big\{\nu_t(y) j(t,y;x) J_y(x)-
\nu_t(x) j(t,x;y)J_x(y)\big\}=0,\ \textrm{on }(0,1)\!\times\!\XX\\
\nu_0=\mu_0, \nu_1=\mu_1
\end{array}\right.
\end{equation}
where we write $\nu_t=\sum _{x\in\XX}\nu_t(x)\,\delta_x.$
 
\begin{proposition}\label{res-14}
Let $\mu_0, \mu_1\in \mathrm{P}(\RR^n)$ be such that $\inf \eqref{S}<\infty,$ for instance when the assumptions of Proposition \ref{res-04} are satisfied. 
\\
The unique solution to the minimal action problem \eqref{eq-33} is $(\mu,j^\psi)$ where $\mu=(\mu_t)_{t\in\ii}$ with
$$
\mu_t=\Ph_t,\quad t\in\ii,
$$
and $\Ph\in\PO$ is the unique solution of \eqref{Sdyn}, and $$j^\psi(t,x;y)=\exp(\psi_t(y)-\psi_t(x))$$ with
$$\psi_t(x)=\log E_R[g_1(X_1)\mid X_t=x],\quad (t,x)\in[0,1)\times\RR^n$$ with $g_1$  a solution of \eqref{eq-13}. Moreover, $\psi$ is the unique classical solution of the Hamilton-Jacobi-Bellman equation \eqref{eq-34}
and
\begin{eqnarray}\label{eq-31}
&&\inf \left\{H(\pi|R _{01});\pi\in\PXX: \pi_0=\mu_0,\pi_1=\mu_1\right\}-H(\mu_0|m)\nonumber\\
&=& \inf \left\{\IX H(\pi^x\mid R^x_1)\,\mu_0(dx);(\pi^x) _{x\in\XX}: \IX \pi^x(\cdot)\,\mu_0(dx)=\mu_1\right\}\nonumber\\
&=& \IiXX \theta^*\big(j^\psi(t,x;y)-1\big)\, \mu_t(dx)J_x(dy)dt\\
&=& \inf \left\{\IiXX \theta^*\big(j(t,x;y)-1\big)\, \nu_t(dx)J_x(dy)dt; (\nu,j): (\nu,j) \textrm{ satisfies }\eqref{eq-32}\right\}\nonumber
\end{eqnarray}
where $R^x_1:=R(X_1\in\cdot|X_0=x)\in\PX$ and $(\pi^x)_{ x\in\XX}\in\PX ^{\XX}$ is any measurable Markov kernel.
\end{proposition}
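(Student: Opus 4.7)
The plan is to mirror the proof of Proposition \ref{res-13}, replacing the diffusion Girsanov formula by its pure-jump analogue. Under the stated hypotheses the conclusions of Lemma \ref{res-01} and Theorem \ref{res-17} hold, so \eqref{Sdyn} admits a unique Markov solution $\Ph = f_0(X_0) g_1(X_1)\, R$. By Statement \ref{res-10} and formula \eqref{eq-19}, its forward generator is the jump generator with intensity $(g_t(y)/g_t(x))\, J_x(dy) = j^\psi(t,x;y)\, J_x(dy)$, and thanks to \eqref{eq-23} the function $g_t = e^{(1-t)L} g_1$ is a classical solution of the linear equation \eqref{eq-18}; consequently $\psi = \log g$ classically solves the non-linear equation \eqref{eq-34}, $\mu_t$-almost everywhere. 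By Theorem \ref{res-11} one has $\Ph_t = f_t g_t\, m = \mu_t$.

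The equality between lines 1 and 2 of \eqref{eq-31} comes from the additive property \eqref{eq-10} applied to the disintegration $R _{01}(dxdy) = m(dx)\, R^x_1(dy)$: when $H(\mu_0|m) < \infty$ and $\pi \in \PXX$ has marginals $(\mu_0, \mu_1)$, writing $\pi(dxdy) = \mu_0(dx)\, \pi^x(dy)$ yields
\begin{equation*}
H(\pi | R _{01}) = H(\mu_0 | m) + \IX H(\pi^x | R^x_1)\, \mu_0(dx),
\end{equation*}
and the correspondence $\pi \leftrightarrow (\pi^x)$ preserves the constraint $\IX \pi^x\, d\mu_0 = \mu_1$. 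To reach line 3 I invoke the jump analogue of the Girsanov identity used in Proposition \ref{res-13}: any Markov $Q \in \PO$ with $H(Q|R) < \infty$ and forward intensity $j^Q(t,x;y)\, J_x(dy)$ satisfies
\begin{equation*}
H(Q|R) = H(Q_0 | m) + \IiXX \theta^*\big(j^Q(t,x;y) - 1\big)\, Q_t(dx) J_x(dy)\, dt,
\end{equation*}
and applying this to $Q = \Ph$, for which $Q_t = \mu_t$ and $j^Q = j^\psi$, gives line 1 $=$ line 3.

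The equality with line 4 rests on a jump analogue of the Cattiaux-L\'eonard lifting \cite{CL95}: to any admissible $(\nu, j)$ with finite action one associates a Markov path measure $Q \in \PO$ via the corresponding martingale problem, with $Q_t = \nu_t$ for all $t$ and forward intensity $j(t, \cdot; \cdot)\, J_\cdot(\cdot)$. The entropy identity above then expresses $H(Q|R) - H(\mu_0|m)$ as the action of $(\nu, j)$. Combined with the optimality of $\Ph$ this gives line 4 $\ge$ line 3, while the forward Fokker-Planck equation for $\Ph$ is exactly \eqref{eq-32} with $j = j^\psi$, so that $(\mu, j^\psi)$ is admissible and attains the value. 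Uniqueness of the optimal $\mu$ is inherited from that of $\Ph$; uniqueness of $j^\psi$ on the support of $\mu_t(dx) J_x(dy) dt$ then follows from the strict convexity of $\theta^*$ together with the lifting, since any other minimizer $j'$ would produce a path measure distinct from $\Ph$ with the same entropy.

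I expect the lifting step to be the main technical obstacle, as it was in Proposition \ref{res-13}. Its proof requires constructing $Q$ from prescribed time marginals and prescribed intensities by a martingale-problem argument; the finite action bound controls $\iint j \log j\, d\mu_t J_x dt$ via the superlinear growth of $\theta^*$ at $+\infty$ and the fact that $\theta^* \equiv +\infty$ on $(-\infty, -1)$, which combined with the standing assumption $\sup_x J_x(\XX) < \infty$ should rule out the explosion of the total jump rate and ensure well-posedness of the lifted measure.
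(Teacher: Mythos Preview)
Your proposal is correct and takes precisely the approach the paper intends: the paper's own proof consists of the single sentence ``The proof follows the same line as Proposition \ref{res-13}'s one,'' and you have carried this out in detail, replacing the Brownian Girsanov identity \eqref{eq-46xx} by its pure-jump analogue and the Cattiaux--L\'eonard lifting by the corresponding jump martingale-problem construction. Your identification of the lifting step as the main technical point, and your handling of uniqueness via the lifting and the uniqueness of $\Ph$, are both appropriate; if anything you have supplied more detail than the paper itself.
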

Remark that $\theta^*\big(j^\psi(t,x;y)-1\big)=\theta^*\big(\frac{g_t(y)-g_t(x)}{g_t(x)}\big)$ where $\frac{g_t(y)-g_t(x)}{g_t(x)}$ looks like a discrete logarithmic derivative which is analogous to $\nabla \psi_t(x)=\nabla\log g_t(x)$.

\begin{proof}
The proof follows the same line as Proposition \ref{res-13}'s one.
\end{proof}

\section{Slowing down}\label{sec-slow}

In this section, we describe an efficient way to recover optimal transport as a limit of Schr\"odinger problems. The main idea consists in slowing  the reference process down to a no-motion process. In the following lines, we present some heuristics and refer the reader to \cite{Leo12a} for a rigorous treatment in the case where  $\XX$ is a real vector space and \cite{Leo12c} in the alternate case where $\XX$ is a discrete graph. The specific case of the reversible Brownian motion has been investigated by T.~Mikami in \cite{Mika04} with a stochastic control approach which differs  from what is presented below.

Let $R$ be Markov with generator $L$. The slowed down process is represented by the sequence $(R^k)_{k\ge1}$ in $\MO$ of Markov measures associated with the generators $$L^k:= L/k,\qquad k\ge1.$$
Remark that slowing the process down doesn't modify its reversible measure $m$; one converges more slowly towards the same equilibrium. Suppose also that the sequence $(R^k)_{k\ge1}$ in  $\MO$ obeys the large deviation principle in $\OO$ with speed $\alpha_k$ and rate function $C$, meaning approximately   that for a  ``large class'' of measurable subsets $A$ of $\OO,$ we have
\begin{equation}\label{eq-53}
R^k(A)\underset{k\rightarrow
\infty}{\asymp} \exp\left[-\alpha_k\inf_{\omega\in A} C(\omega)\right].
\end{equation}
For instance, in the case \eqref{eq-51} when $R$ is the reversible Brownian motion on $\Rn$, Schilder's theorem tells us that $C$ is the kinetic action \eqref{eq-43} and $\alpha_k=k.$ In the case \eqref{eq-52} when $R$ is a reversible random walk, it is proved in  \cite{Leo12c} that $\alpha_k=\log k$ and the rate function is
\begin{equation}\label{eq-65}
C(\omega):= \sum _{0\le t\le 1} \1 _{\{\omega _{t^-}\not=\omega_t\}},\qquad \omega \in \OO,
\end{equation}
where in this situation $\OO$ is the space of all right-continuous paths with finitely many jumps. Remark that $C$ is simply the total number of jumps of the path.

At a heuristic level, the $\Gamma$-convergence of 
\begin{equation}\label{Skdyn}
H(P|R^k)/\alpha_k\to \textrm{min};\qquad P\in\PO:P_0=\mu_0,P_1=\mu_1
\tag{S$^k _{\textrm{dyn}}$}
\end{equation}
as $k$ tends to infinity to
\begin{equation}\label{MKdyn}
\IO C\,dP\to \textrm{min};\qquad P\in\PO:P_0=\mu_0,P_1=\mu_1
\tag{MK$_{\textrm{dyn}}$}
\end{equation}
is best seen with the dual problems. Without getting into the details, to show this $\Gamma$-convergence, it is enough to prove that the objective functions of the dual problems converge pointwise, see \cite{Leo12a} for the details. Let us check this pointwise convergence. Recall that the dual problem of \eqref{s} is
\begin{equation}\label{d}
\Delta_R(\varphi_0,\psi_1)\to \textrm{max};\qquad \varphi_0,\psi_1\in C_b(\XX)
\tag{D}
\end{equation}
with $\Delta_R(\varphi_0,\psi_1):=\IX \varphi_0\,d \mu_0+\IX \psi_1\,d \mu_1
-\log \IO e ^{\varphi_0(X_0)+\psi_1(X_1)}\,dR.$
Consequently, for each $k\ge1,$ the dual problem of \eqref{skdyn} consists in maximizing $\alpha_k ^{-1}\Delta _{R^k}(\varphi_0,\psi_1)$ or equivalently: $\alpha_k ^{-1}\Delta _{R^k}(\alpha_k\varphi_0,\alpha_k \psi_1).$ This leads us to
\begin{equation}\label{dk}
\IX \varphi_0\,d \mu_0+\IX \psi_1\,d \mu_1
-\alpha_k ^{-1}\log \IO e ^{\alpha_k[\varphi_0(X_0)+\psi_1(X_1)]}\,dR^k\to \textrm{max};\quad \varphi_0,\psi_1\in C_b(\XX)
\tag{D$^k$}
\end{equation}
The pointwise limit, as $k$ tends to infinity, of $\alpha_k ^{-1}\Delta _{R^k}(\alpha_k\varphi_0,\alpha_k \psi_1)$ is a direct consequence of the large deviation principle \eqref{eq-53} and the Laplace-Varadhan integral lemma \cite[Thm.\,4.3.1]{DZ}, which provide us with 
\begin{equation*}
\Lim k \alpha_k ^{-1}\log \IO e ^{\alpha_k[\varphi_0(X_0)+\psi_1(X_1)]}\,dR^k=
\sup_\OO \left\{\varphi_0(X_0)+\psi_1(X_1)-C\right\}.
\end{equation*}
Here, we took advantage of $\varphi_0(X_0)+\psi_1(X_1)\in C_b(\OO)$ to apply the Laplace-Varadhan  lemma. We see that the pointwise limit of the objective function of \eqref{dk} is 
$$
\Lim k \alpha_k ^{-1}\Delta _{R^k}(\alpha_k\varphi_0,\alpha_k \psi_1)=
\IX \varphi_0\,d \mu_0+\IX \psi_1\,d \mu_1 -\sup_\OO \left\{\varphi_0(X_0)+\psi_1(X_1)-C\right\}
$$
and the limit dual problem is (this is  informal)
\begin{equation*}
\IX \varphi_0\,d \mu_0+\IX \psi_1\,d \mu_1 -\sup_\OO \left\{\varphi_0(X_0)+\psi_1(X_1)-C\right\}\to \textrm{max};\qquad \varphi_0,\psi_1\in C_b(\XX).
\end{equation*}
But this problem is equivalent to
\begin{equation}\label{dinf}
\IX \varphi_0\,d \mu_0+\IX \psi_1\,d \mu_1\to \textrm{max};\qquad \varphi_0,\psi_1\in C_b(\XX):\varphi_0\oplus \psi_1\le c
\tag{D$^\infty$}
\end{equation}
where 
\begin{equation}\label{eq-66}
c(x,y):= \inf\left\{C(\omega); \omega\in\OO:\omega_0=x,\omega_1=y\right\} ,
\end{equation}
recall \eqref{eq-46}. To see this, first remark that for any $\lambda\in\RR,$ transforming $\varphi_0\oplus \psi_1$ into $\varphi_0\oplus \psi_1+\lambda$ doesn't modify the value of $\IX \varphi_0\,d \mu_0+\IX \psi_1\,d \mu_1 -\sup_\OO \left\{\varphi_0(X_0)+\psi_1(X_1)-C\right\}$. Hence, when $\sup_\OO \left\{\varphi_0(X_0)+\psi_1(X_1)-C\right\}<\infty,$ we can normalize $\varphi_0\oplus \psi_1$ so that $\sup_\OO \left\{\varphi_0(X_0)+\psi_1(X_1)-C\right\}=0$ and we obtain the equivalent problem
\begin{equation*}
\IX \varphi_0\,d \mu_0+\IX \psi_1\,d \mu_1\to \textrm{max};\qquad  \varphi_0,\psi_1\in C_b(\XX):\varphi_0(X_0)+\psi_1(X_1)-C\le 0.
\end{equation*}
But, $\varphi_0(X_0)+\psi_1(X_1)-C\le 0$ on $\OO$ if and only if $\varphi_0\oplus \psi_1\le c$ on $\XXX.$ 
\\
We have informally shown that $\Lim k \eqref{dk}=\eqref{dinf}$ (in some insufficiently  specified sense) which is the usual Kantorovich problem, dual to \eqref{mk}. Consequently,  we must have $\Lim k\eqref{skdyn}= \eqref{mk}$ in some sense.
Similarly, the static analogue of \eqref{skdyn} which is
\begin{equation}\label{Sk}
H(\pi|R^k _{01})/\alpha_k\to \textrm{min};\qquad \pi\in\PXX:\pi_0=\mu_0,\pi_1=\mu_1
\tag{S$^k$}
\end{equation}
converges to
\begin{equation}\label{MK}
\IXX c\,d \pi\to \textrm{min};\qquad \pi\in\PXX: \pi_0=\mu_0, \pi_1=\mu_1
\tag{MK}
\end{equation}
with $c$ as above.
\\
It happens that this convergence is in terms of $\Gamma$-convergence.

\begin{statement}[See \cite{Leo12a,Leo12c}]\label{res-15}
Suppose that the slowed down Markov measure $R^k\in\MO$ associated with the generator $L^k:=L/k$ satisfies the large deviation principle \eqref{eq-53} with speed $\alpha_k$ and rate function $C$ in $\OO.$ Then,
\begin{equation*}
\Glim k \eqref{skdyn}=\eqref{mkdyn} 
\qquad \textrm{and} \qquad
\Glim k \eqref{sk}=\eqref{mk}.
\end{equation*}
In particular:
\begin{enumerate}
\item
In the reversible Brownian motion case \eqref{eq-51} in $\XX=\Rn$,  we have:\\ $\alpha_k=k,$ $C(\omega)=\Iii |\dot\omega_t|^2/2\,dt,$ $c(x,y)=|y-x|^2/2$ and we recover \eqref{eq-50}.
\item
In the case of a random walk on a graph \eqref{eq-52},  we have:\\ $\alpha_k=\log k,$ $C(\omega)=\sum _{0\le t\le1}\1 _{\left\{\omega_t\not=\omega _{t^-}\right\}}$ and $c=d _{\sim}:$ the graph distance on $(\XX,\sim)$.
\end{enumerate}
\end{statement}
The diffusion case is treated in details in \cite{Leo12a}. In the specific Brownian case (1), the Schr\"odinger problem converges to the quadratic Monge-Kantorovich problem and, as already remarked at \eqref{eq-54}, the bridges converge as follows: $$\Lim k R ^{k,xy}=\delta _{\gamma ^{xy}}\in\PO$$ where $\gamma ^{xy}_t=(1-t)x+ty$, $0\le t\le1$ is the constant speed geodesic path between $x$ and $y.$ In case  \eqref{mkdyn} has a unique solution $\Ph,$ we also have $\Lim k \Ph^k=\Ph\in\PO.$

Since the rigorous version of the Informal Statement \ref{res-15} is simpler to state in the second case (2) of a random walk on a graph,  we refer the reader to \cite{Leo12a} for the details about (1) and we restrict our attention to (2). In this random walk case, the rigorous version of the Informal Statement \ref{res-15} is stated below at Theorem \ref{res-18}. Some preparation is needed. In particular, let us recall basic facts about $\Gamma$-convergence.

\subsubsection*{$\Gamma$-convergence}

 Recall that $\Glim k f^k=f$ on the metric space $Y$ if and only if for any $y\in Y,$
\begin{enumerate}[(a)]
\item
$\Liminf k f^k(y_k)\ge f(y)$ for any convergent sequence $y_k\to y,$
\item
$\Lim k f^k(y^o_k)=f(y)$ for some sequence $y^o_k\to y.$
\end{enumerate}
A  function $f$ is said to be coercive if for any $a\ge \inf f,$ $\left\{f\le a\right\} $ is a compact set.
\\
 The sequence $(f^k)_{k\ge1}$ is said to be equi-coercive if for any real $a$, there exists some compact set $K_a$ such that $\cup_k\left\{f^k\le a\right\} \subset K_a.$
\\
If in addition to $\Glim k f^k=f$, the sequence $(f^k)_{k\ge1}$ is equi-coercive,
 then:
\begin{itemize}
\item
$\Lim k \inf f^k=\inf f,$
\item
if $\inf f<\infty,$
any limit point $y^*$ of a sequence $(y^*_k)_{k\ge1}$ of approximate minimizers i.e.: $f^k(y^*_k)\le \inf f^k+\epsilon_k$ with $\epsilon_k\ge0$ and $\Lim k \epsilon_k=0,$ minimizes $f$ i.e.: $f(y^*)=\inf f.$
\end{itemize}
For more details about $\Gamma$-convergence, see \cite{DalMaso} for instance.

The convex indicator $\iota_A$ of  any subset $A,$ is defined to be equal to $0$ on $A$ and to $\infty$ outside $A.$
We denote for each $k\ge2,$ (we drop $k=1$ not to divide by $\log (1)$ below),
\begin{equation*}
I^k(P):=H(P|R^k)/\log k+\iota_{\{P:P_0=\mu_0,P_1=\mu_1\}},\qquad P\in\PO,
\end{equation*}
so that \eqref{Skdyn} is simply: ($I^k\to \textrm{min}).$ We also define
$$
I(P)=E_P C+\iota_{\{P:P_0=\mu_0,P_1=\mu_1\}},\qquad P\in\PO
$$
with $C$ given  at \eqref{eq-65}. The  dynamic Monge-Kantorovich problem \eqref{MKdyn} rewrites as $(I\to \textrm{min})$.
\\
Similarly, we denote for each $k\ge2,$ 
\begin{equation*}
J^k(\pi):=H(\pi|R^k _{01})/\log k+\iota_{\{\pi:\pi_0=\mu_0,\pi_1=\mu_1\}},\qquad \pi\in\PXX,
\end{equation*}
so that \eqref{Sk} is simply: ($J^k\to \textrm{min}).$ We also define
$$
J(\pi)=\IXX c\,d \pi+\iota_{\{\pi:\pi_0=\mu_0,\pi_1=\mu_1\}},\qquad \pi\in\PXX
$$
with $c$ given  at \eqref{eq-66}. The   Monge-Kantorovich problem \eqref{MK} rewrites as $(J\to \textrm{min})$.

The spaces $\PO$ and $\PXX$ are equipped with the topologies of narrow convergence:  weakened by the spaces of all numerical continuous and bounded functions. The $\Gamma$-convergences on $\PO$ and $\PXX$ are related to these topologies.

\begin{theorem}[\cite{Leo12c}]\label{res-18}
Assume that the random walk $R$ and the prescribed marginal measures $\mu_0,\mu_1\in\PX$ satisfy the hypotheses of Theorem \ref{res-17}.\\
For each $k\ge2,$ let $\Ph^k\in\PO$ and $\ph^k\in\PXX$ be the respective solutions of \eqref{Skdyn} and \eqref{Sk}.
\begin{enumerate}
\item
The sequence $(I^k)_{k\ge2}$ is equi-coercive and $\Glim k I^k=I$ in $\PO$.
\\
In particular, $\Lim k \inf \eqref{Skdyn}=\inf \eqref{MKdyn}$ and any limit point of $(\Ph^k)_{k\ge2}$ is a solution of \eqref{MKdyn}.
\\
A more careful study allows to show that there is a unique limit point $\Ph$, so that $\Lim k\Ph^k=\Ph\in\PO$, and that $\Ph$ is the only solution of the auxiliary entropy minimization  \eqref{Stdyn} which is stated below.
\item
The sequence $(J^k)_{k\ge2}$ is equi-coercive and $\Glim k J^k=J$ in $\PXX$.
\\
In particular, $\Lim k \inf \eqref{Sk}=\inf \eqref{MK}$ and any limit point of $(\ph^k)_{k\ge2}$ is a solution of \eqref{MK} which is the Monge-Kantorovich problem associated with the metric cost  $c=d_\sim:$ the usual graph distance .
\\
Furthermore, this sequence admits the unique limit point $\Ph _{01}$, so that $\Lim k\ph^k=\Ph _{01}\in\PXX$.
\end{enumerate}
\end{theorem}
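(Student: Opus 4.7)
The plan is to deduce Theorem \ref{res-18} from a precise large deviation principle for $(R^k)_{k\ge 1}$ on $\OO = D(\ii,\XX)$ with speed $\alpha_k = \log k$ and rate function $C(\omega) = \sum_{0\le t\le 1}\1_{\{\omega_{t^-}\ne\omega_t\}}$. I would first establish this LDP by a Girsanov-type argument tailored to the slowed-down jump measure: the expected number of jumps under $R^k$ in unit time is of order $1/k$, and the law of paths supported on exactly $n$ jumps has density against $R^k$ of order $k^n$ times a bounded factor, which simultaneously produces the logarithmic speed and the jump-counting rate function.

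For the $\Gamma$-liminf inequality, given $P^k \to P$ narrowly in $\PO$, the plan is to invoke the variational representation of relative entropy
\begin{equation*}
H(P^k|R^k) \ge \int v\,dP^k - \log\int e^v\,dR^k, \qquad v\in C_b(\OO),
\end{equation*}
applied with $v = (\log k)\, u$ for $u\in C_b(\OO)$, divided by $\log k$, and then to use the Laplace--Varadhan lemma \cite[Thm.\,4.3.1]{DZ} to obtain
\begin{equation*}
\liminf_{k\to\infty}\frac{H(P^k|R^k)}{\log k} \ge \int u\,dP - \sup_{\omega\in\OO}\{u(\omega)-C(\omega)\}.
\end{equation*}
Taking the supremum over $u \in C_b(\OO)$ and using Fenchel--Moreau duality for the \lsc\ function $C$ recovers $\int C\,dP$. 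The marginal constraint set $\{P: P_0=\mu_0,\ P_1=\mu_1\}$ is narrowly closed, so its indicator passes to the limit, giving $\liminf_k I^k(P^k) \ge I(P)$.

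The $\Gamma$-limsup is the main obstacle. Given an admissible $P$ with $I(P) < \infty$, I need to construct a recovery sequence $P^k \to P$ with $\limsup_k H(P^k|R^k)/\log k \le E_P C$ while preserving the marginals. The core computation, for a pure-jump Markov $P$ with smooth bounded intensity $\tilde J$, is a direct analogue of Proposition \ref{res-14}: with reference jump rates $J/k$ under $R^k$, the speed parameter $j = k\tilde J/J$ enters the convex functional as $\theta^*(j-1)\cdot J/k$, whose leading asymptotic as $k\to\infty$ is $\tilde J \log k$. Integrating in time and space produces exactly $(\log k)\,E_P C + O(1)$, from which the sought bound follows. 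A density argument then extends the estimate to general admissible $P$, approximating by pure-jump Markov laws with the prescribed marginals; the family of $(f,g)$-transforms furnished by Theorem \ref{res-17}, applied to mollified versions of $\mu_0,\mu_1$, provides a natural approximating class.

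Equi-coercivity reduces to exponential tightness: the LDP upper bound gives $R^k(\{C > a\}) \le \exp(-a\log k\,(1+o(1)))$, and the Donsker--Varadhan inequality applied to $C\wedge a$ forces uniform tightness of $\{I^k \le M\}$. Standard $\Gamma$-convergence theory then delivers $\Lim k \inf I^k = \inf I$ together with convergence of subsequences of minimizers to minimizers of $I$. Part (2) is deduced from part (1) via the continuous projection $(X_0,X_1)\colon \OO \to \XXX$: Proposition \ref{res-03} yields $\ph^k = \Ph^k_{01}$ with $\inf\eqref{Sk} = \inf\eqref{Skdyn}$, while the contraction formula $c(x,y) = \inf_{\omega_0=x,\ \omega_1=y} C(\omega) = d_\sim(x,y)$ identifies the limit cost and transports the $\Gamma$-convergence statement. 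The uniqueness of the full limit point $\Ph$, which singles out the entropic solution of the auxiliary problem \eqref{Stdyn} rather than an arbitrary minimizer of $I$, is the subtlest issue: it requires a second-order selection argument that exploits the strict convexity of $H(\cdot|R^k)$ beyond the first-order information encoded in $\Gamma$-convergence.
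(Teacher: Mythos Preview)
The paper does not prove Theorem \ref{res-18}; it is quoted from \cite{Leo12c}. What Section \ref{sec-slow} offers is a heuristic through the \emph{dual} problems: pointwise convergence of the objective of \eqref{dk} to that of \eqref{dinf} via Laplace--Varadhan, together with a general principle (for which the paper refers to \cite{Leo12a}) that pointwise convergence of the concave duals entails $\Gamma$-convergence of the convex primals. Your $\Gamma$-liminf argument is precisely this dual route rewritten through the variational formula for entropy, so on that half you match the paper's outline.

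Your $\Gamma$-limsup is a direct primal attack, and this is where the sketch is thin. A cleaner entry point than the action formula of Proposition \ref{res-14} is the explicit Girsanov density
\[
\frac{dR}{dR^k}(\omega)=k^{\,C(\omega)}\exp\Big(-(1-\tfrac1k)\int_0^1 J_{X_t}(\XX)\,dt\Big),
\]
which for \emph{any} $P$ with $H(P|R)<\infty$ gives
\[
\frac{H(P|R^k)}{\log k}=E_P C+\frac{H(P|R)}{\log k}-\frac{1-1/k}{\log k}\,E_P\!\int_0^1 J_{X_t}(\XX)\,dt\ \longrightarrow\ E_P C,
\]
so the constant sequence $P^k=P$ is already a recovery sequence on $\{H(\cdot|R)<\infty\}$. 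The genuine gap is the density step you gesture at: extending to all $P$ with $I(P)<\infty$ while preserving \emph{both} endpoint marginals and not inflating $E_P C$. The $(f,g)$-transforms of Theorem \ref{res-17} are solutions of Schr\"odinger problems---one for each pair of marginals---not a dense family inside $\{P_0=\mu_0,\,P_1=\mu_1,\,E_P C<\infty\}$; a Dirac on a single geodesic path, for instance, is not obviously approximable by them with the correct cost. The dual approach sketched in the paper bypasses this approximation issue entirely, which is its main economy.

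Two further cautions. Your equi-coercivity bound $R^k(\{C>a\})\le e^{-a\log k(1+o(1))}$ is ill-posed when $m$ (hence $R^k$) has infinite mass, and a tail estimate on $C$ alone does not deliver Skorokhod tightness without controlling intermediate-time marginals. And you correctly identify the selection of the unique limit $\Ph$ as the subtlest step; it rests on the auxiliary problem \eqref{Stdyn} and on identifying $\widetilde R^{xy}$ as the narrow limit of the bridges $R^{k,xy}$, both of which the paper defers to \cite{Leo12c} rather than proving.
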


It is also proved in \cite{Leo12c} that for any distinct $x,y\in\XX,$ 
\begin{equation*}
\Lim k R ^{k,xy}=\Rt ^{xy}\in\PO
\end{equation*}
where $\widetilde R ^{xy}$ is the $(x,y)$-bridge of
\begin{equation*}
\Rt:= \1 _{G}\exp \left(\Iii J _{X_t}(\XX)\,dt\right) R\in\MO
\end{equation*}
with $G$ the set of all geodesic paths on $(\XX,d_\sim).$ Remark that the set $G ^{xy}$ of all geodesic paths between any two distinct states $x$ and $y$ is  infinite since the instants of jump are not  specified: only the ordered enumeration of the  visited states is relevant.
Let us denote $\mathcal{M}(\mu_0,\mu_1)\subset\PXX$ the set of all solutions of the Monge-Kantorovich problem \eqref{mk} with $c=d_\sim,$ and introduce the subsequent auxiliary entropic minimization problem
\begin{equation*}\label{Stdyn}
H(P|\Rt)\to \mathrm{min};\qquad P _{01}\in \mathcal{M}(\mu_0,\mu_1)
\tag{$\widetilde{\mathrm{S}}_{\mathrm{dyn}}$}
\end{equation*}
The set of all solutions of \eqref{mkdyn} consists of all $P\in\PO$ concentrated on $G,$ i.e.\ $P(G)=1,$ and such that the  endpoint  marginal $P _{01}\in\PXX$ solves \eqref{mk}. Although \eqref{mkdyn} has always infinitely many solutions (for any distinct $x,y,$ $G ^{xy}$ is  infinite), the sequence of Schr\"odinger problems \eqref{skdyn} selects a unique limit point: 
\begin{equation*}
\Lim k\Ph^k=\Ph\in\PO
\end{equation*}
where $\Ph$ is the unique solution of \eqref{Stdyn}. We obtain the corresponding  results about the static problems \eqref{Sk} and \eqref{MK} by considering the push-forward mapping $P\in\PO\to P _{01}\in\PXX$.

\section{The statistical physics motivation of Schr\"odinger's problem}

 We consider a large number $n$ of independent (non-interacting) moving random particles in the state space $\XX.$ They are described by the independent stochastic processes $Y^1,\dots,Y^n$ taking their random values in $\OO$ with the laws 
\begin{equation}\label{eq-45}
\mathrm{Law}(Y^i)=R(\cdot\mid X_0=y^i_0)\in\PO,
\qquad 1\le i\le n
\end{equation}
 where $R\in\MO$ is a  path  measure and $y^i_0\in\XX$ is the deterministic initial position of the $i$-th particle.  It is also assumed that the particles are indistinguishable. Therefore, one doesn't loose information considering the empirical probability measure 
\begin{equation*}
L^n:=\frac{1}{n}\sum _{1\le i\le n}\delta _{Y^i}
\end{equation*}
which is a random element of $\PO.$ At each time $t,$ the empirical measure of the particle system is the following random element of $\PX,$ 
\begin{equation*}
L^n_t:=\frac{1}{n}\sum _{1\le i\le n}\delta _{Y^i_t},\quad 0\le t\le1.
\end{equation*}
Suppose that the initial positions are close to a profile $\mu_0\in\PX,$ i.e.
\begin{equation*}
L^n_0=\frac{1}{n}\sum _{1\le i\le n}\delta _{y^i_0} \ \underset{n\to \infty}{\rightarrow}\ \mu_0\in\PX
\end{equation*}
with respect to the narrow topology $\sigma(\PX,C_b(\XX)).$ The law of large numbers tells us that, as $n$ tends to infinity, $L^n$ converges in law to the deterministic limit $^{\mu_0} R:= \IX R(\cdot\mid X_0=x)\,\mu_0(dx)$ in $\PO$ and in particular that at time $t=1,$
\begin{equation*}
L^n_1(dy) \ \underset{n\to \infty}{\rightarrow}\ 
(^{\mu_0} R)_1:= \IX R(X_1\in dy\mid X_0=x)\,\mu_0(dx).
\end{equation*}
Schr\"odinger addressed the following problem. Suppose that at the final time $t=1,$ you observe the system in a profile $L^n_1$ far away from the expected profile $(^{\mu_0} R)_1:$ for all large enough $n,$ $L^n_1$ is in a very small neighbourhood of some $\mu_1\in\PX$ which doesn't contain $(^{\mu_0} R)_1.$  This may happen since $n$ is finite, but this is a very rare event, i.e.\ with an exponentially small probability, see \eqref{eq-48} below. \emph{Nevertheless, conditionally on this rare event, what is the most likely dynamical behaviour of the whole random system described by $L^n?$}

Before stating this rigorously at Problem \ref{prob-01} below, take a metric $\mathrm{d}$ on $\PX$ compatible
with the narrow topology  and
denote $B(\mu,\epsilon)=\{\nu\in\PX;
\mathrm{d}(\nu,\mu)<\epsilon\}$ the open ball centred at $\mu$ with radius $\epsilon>0.$

\begin{problem}[Schr\"odinger's question\ \cite{Sch32}]\label{prob-01}
Let $\mu_0, \mu_1\in\PX$ be given.
What is the limit
\begin{equation}\label{eq-S02}
    \lim_{\epsilon\downarrow 0}\Lim n
    \mathrm{Prob}(L^n\in\cdot|L^n_1\in
    B(\mu_1,\epsilon))
\end{equation}
in $\mathrm{P}(\PO)?$
\end{problem}

\subsection*{Solving the problem without getting into details}

Schr\"odinger's approximate proof contains the main ideas. It is
based on a statistical physics approach. The main tool for obtaining
the limiting behavior as $n$ tends to infinity of the combinatoric
terms is Stirling's formula. As  pointed out by F\"ollmer in \cite{Foe85}, its modern counterpart, which is
available in a much more general setting, is
Sanov's theorem.

\begin{statement}[Informal statement of Sanov's theorem]
Let $Y^1,\dots,Y^n,\dots$ be a sequence of independent  identically distributed $\OO$-valued random variables with common law $R\in\PO$\footnote{We take a probability measure, rather than $R\in\MO,$ for the simplicity of exposition.}. Define $L^n:=\frac{1}{n}\sum _{i=1}^n \delta _{Y^i}$ its empirical measure. Then, for a ``large class'' of measurable subsets $A$ of $\PO,$ we have
\begin{equation}\label{eq-S03}
    \mathrm{Prob}(L^n\in A)\underset{n\rightarrow
\infty}{\asymp} \exp\left[-n\inf_{P\in A} H(P|R)\right].
\end{equation}
\end{statement}

The rigorous statement of this result is in
terms of a large deviation principle. It is valid for a general class of spaces $\OO$, not necessarily a path space. For a comprehensive
introduction to the theory of large deviations including Sanov's theorem, a  good textbook is \cite{DZ}. One says that $H(\cdot|R)$ is the rate
function of the large deviations of
$\{L^n\}_{n\ge1}$ as $n$ tends to infinity.

\begin{proof}[Idea of proof (a hint to agree with this statement)] We consider informally the situation where $\OO$ is replaced by a three-point set. 
Take $\Omega=\{a,b,c\},$
$R=\alpha\delta_a+\beta\delta_b+\gamma\delta_c$ and
$P=p\delta_a+q\delta_b+r\delta_c$ with
$\alpha,\beta,\gamma,p,q,r>0$ and $\alpha+\beta+\gamma=p+q+r=1.$
Then,
\begin{eqnarray*}
    &&\mathrm{Prob}(L^n\approx P)\\
    &=&\mathrm{Prob}(L^n(a)\approx p,L^n(b)\approx q, L^n(c)\approx
    r)\\
  &\approx& \frac{n!}{(np)!(nq)!(nr)!}\alpha^{np}\beta^{nq}\gamma^{nr} \\
  &\approx& \exp[n\log n-np\log(np)-nq\log(nq)-nr\log(nr)+np\log\alpha+nq\log\beta+nr\log\gamma] \\
  &=& \exp[-n(p\log(p/\alpha)+q\log(q/\beta)+r\log(r/\gamma))] \\
  &=& \exp[-nH(P|R)]
\end{eqnarray*}
where we used Stirling's formula: $k!\approx \exp[k\log k-k]$ as $k$
tends to infinity.
\end{proof}

This hint is very much in the spirit of Schr\"odinger's derivation
in \cite{Sch32}. 

Since $P\mapsto H(P|R)\in[0,\infty]$ is
strictly convex and $H(P|R)=0$ if and only if $P=R$, see \eqref{eq-06}, one observes that if $R\in A,$ (\ref{eq-S03}) leads to the law of large
numbers: $\Lim n L^n=R,$ with an exponential rate of convergence.

We need a slight modification of Sanov's theorem.

\begin{statement}[Informal statement of Sanov's modified theorem]
\quad \\
Let $Y^1,\dots,Y^n,\dots$ be the sequence of independent  $\OO$-valued random variables specified by \eqref{eq-45}. Define $L^n:=\frac{1}{n}\sum _{i=1}^n \delta _{Y^i}$ its empirical measure. Then, for a ``large class'' of measurable subsets $A$ of $\PO,$ we have
\begin{equation}\label{eq-S04}
    \mathrm{Prob}(L^n\in A)\underset{n\rightarrow
\infty}{\asymp} \exp\left[-n\inf_{P\in A: P_0=\mu_0}\Big\{H(P|R)-H(\mu_0|R_0)\Big\} \right].
\end{equation}
\end{statement}
This statement is proved by D.~Dawson and J.~G\"artner in \cite[Thm.\,3.5]{DG87} where the Schr\"odinger problem, when $R$ is a diffusion process,  is re-discovered\footnote{No reference to the original papers by Schr\"odinger is given in \cite{DG87}.} and investigated by means of large deviations of large non-interacting particle systems. More precisely, \cite[Thm.\,3.5]{DG87} only states a variational formula in the spirit of  for the rate function \eqref{eq-07b}.  An alternate proof is given in  \cite[Thm.\,2.1]{CL95} which states that the rate function is $H(P|\mu_0R)+ \iota _{\left\{P:P_0=\mu_0\right\}},$ where $\mu_0R:=\frac{d \mu_0}{dR_0}(X_0)R(\cdot|X_0).$ But it is easily seen with the additive property \eqref{eq-10} of the relative entropy that for any $P$ such that $P_0=\mu_0,$ we have 
$H(P|\mu_0 R)=\IX H\big(P(\cdot|X_0=x)|R(\cdot|X_0=x)\big)\, \mu_0(dx)$
 and
$H(P|R)=H(\mu_0|R_0)+\IX H\big(P(\cdot|X_0=x)|R(\cdot|X_0=x)\big)\, \mu_0(dx),$ which implies that $H(P|\mu_0R)=H(P|R)-H(\mu_0|R_0),$ the desired result.
\\
The conditional probability in (\ref{eq-S02}) has the form
$\mathrm{Prob}(L^n\in A|L^n\in C_\epsilon)$ where
$$C_\epsilon=\{P\in\PO;P_0=\mu_0, P_1\in
B(\mu_1,\epsilon)\}$$ and $\epsilon>0$ is introduced to guarantee
that $\mathrm{Prob}(L^n\in C_\epsilon)$ doesn't vanish. With
\eqref{eq-S04} one sees that for each $\epsilon>0$ and ``all''
$A\in\PO,$
\begin{equation*}
    \mathrm{Prob}(L^n\in A|L^n\in C_\epsilon)\underset{n\rightarrow
\infty}{\asymp} \exp\left[-n\Big\{\inf_{P\in A\cap C_\epsilon}
H(P|R)-\inf_{P\in C_\epsilon} H(P|R)\Big\}\right]
\end{equation*}
Some analytical work (formally, think of $A$ as an arbitrarily
small neighbourhood of a generic $P\in\PO$)  allows us to show that this
implies that
    $\Lim n  \mathrm{Prob}(L^n\in \cdot|L^n\in
    C_\epsilon)=\delta_{\Ph_\epsilon}$
where $\Ph_\epsilon$ is the solution of the convex minimization
problem
\begin{equation*}
    H(P|R)\to \textrm{min};\qquad P\in C_\epsilon.
\end{equation*}
Note that this problem admits a unique solution since $H(\cdot|R)$
is a \emph{strictly} convex function on the convex set
$C_\epsilon.$ Existence is obtained as usual showing that
$H(\cdot|R)$ has compact sublevel sets. Finally, as $\epsilon$
decreases to zero, $C_\epsilon$ decreases to
$C=\{P\in\PO;P_0=\mu_0,P_1=\mu_1\}$ and the
objective functions of the minimization problems on $C_\epsilon:$
$H(P|R)+\iota_{C_\epsilon}(P)$ where
$\iota_{C_\epsilon}(P)=\left\{\begin{array}{ll}
  0 & \textrm{if }P\in C_\epsilon \\
  \infty & \textrm{otherwise} \\
\end{array}\right.$ increase towards $H(P|R)+\iota_{C}(P).$ Together with some compactness, this
monotonicity allows to prove easily that
$\lim_{\epsilon\rightarrow 0}\Ph_\epsilon=\Ph$ where $\Ph$ is the
unique solution to the limiting minimization problem:
\begin{equation*}
H(P|R)\to \textrm{min};\qquad P\in C.
\end{equation*}
Therefore, we have informally obtained  the answer to Schr\"odinger's
question.

\begin{statement}[The answer to Schr\"odinger's question]
The limit (\ref{eq-S02}) is
\begin{equation*}
    \lim_{\epsilon\downarrow 0}\Lim n
    \mathrm{Prob}(L^n\in\cdot|L^n_1\in
    B(\mu_1,\epsilon))=\delta_{\Ph}\in \mathrm{P}(\PO)
\end{equation*}
where $\Ph$ is the unique solution to the entropy minimization
problem
\begin{equation*}
    H(P|R)\to \emph{min};\qquad P\in\PO: P_0=\mu_0, P_1=\mu_1.
    \tag{S$_{\textrm{dyn}}$}
\end{equation*}
\end{statement}

Loosely speaking, this means that conditionally on
$L^n_0\approx\mu_0$ and $L^n_1\approx \mu_1,$ the whole system $L^n$
tends in law as $n$ tends to infinity towards $\Ph.$ In fact, the
rigorous proof of this theorem  \cite[Thm.\,7.3]{Leo07d} uses large deviation principles and
shows that this convergence is exponentially fast. Therefore,
Borel-Cantelli lemma allows us to state an almost sure version of
this conditional law of large numbers.

The same line of reasoning leads to the following evaluation of the probability that the system evolves spontaneously from the prepared initial profile $\mu_0$ to the unexpected profile final profile $\mu_1:$
\begin{equation}\label{eq-48}
 \lim_{\epsilon\downarrow 0}\Lim n \frac{1}{n}\log
    \mathrm{Prob}(L^n_1\in
    B(\mu_1,\epsilon)|L^n_0\in B(\mu_0,\epsilon))=-\big\{\inf \eqref{sdyn}- H(\mu_0|m)\big\} .
\end{equation}

These considerations show that  solving Schr\"odinger's problem amounts to solve
the convex minimization problem \eqref{Sdyn} which, in statistical physics,
enters the class of Boltzmann-Gibbs conditioning principles. 

For a variation on this theme, with killed particles, see \cite{DGW90}.

\subsection*{The  lazy gas experiment}

In his textbook \cite[pp.\,445-446]{Vill09}, C.~Villani writes in a section entitled \emph{``A fluid mechanics feeling for Ricci curvature - The lazy gas experiment"}, the following sentences.  

\emph{Take a perfect gas in which particles do not interact, and ask him to move from a certain prescribed density field at time $t=0,$ to another prescribed density field at time $t=1.$ Since the gas is lazy, he will find a way to do so that needs a minimal amount of work (least action principle). Measure the entropy\footnote{Here, the entropy is  standard Boltzmann's one: $p\mapsto-H(p|\mathrm{vol}),$ which is a concave function.} of the gas at each time, and check that it always lies \emph{above} the line joining the final and initial entropies. If such is the case, then we know that we live in a nonnegatively curved space.}

This is clearly Schr\"odinger's thought experiment. As  \cite{Vill09} is only concerned with optimal transport, this lazy gas experiment must be understood  at the level of the displacement interpolations. It refers to the important discovery by K.T.~Sturm and M.~von~Renesse  \cite{SvR05} that entropy along  displacement interpolations enjoys convexity properties related to Ricci lower bounds\footnote{The decisive milestones on the way towards this result are \cite{McC97,OV00,CMS01}.}. Namely, Otto's heuristic calculus  (see \cite{JKO98,Otto01} and \cite[Ch.\,15]{Vill09}) allows us to guess that, along the displacement interpolations $[\mu_0,\mu_1]$ with respect to quadratic optimal transport on a Riemannian manifold $\XX,$ the second derivative of the entropy as a function of time: $t\in\ii\mapsto h _{[\mu_0,\mu_1]}(t):=H(\mu_t|\vol)$ satisfies
\begin{equation}\label{eq-58}
h _{[\mu_0,\mu_1]}''(t)= \langle \Gamma_2(\psi_t),\mu_t\rangle,\quad 0\le t\le1,
\end{equation}
where $\psi$ solves \eqref{eq-41}  and $\Gamma_2(\psi):=L\Gamma(\psi)-2 \Gamma(\psi,L \psi)$, with $L=\Delta/2,$ is the iterated carré du champ.
Bochner's formula, relates $\Gamma_2$ and the Ricci curvature:
\begin{equation*}
\Gamma_2(\psi)=\| \nabla \psi\|^2_{\mathrm{HS}}+\mathrm{Ric}(\nabla\psi).
\end{equation*}
This is the starting point of the Lott-Sturm-Villani theory \cite{St06a,St06b,LV09}. 

Schr\"odinger problem suggests a slight (more realistic :-) variant of this thought experiment where displacement interpolations are replaced with entropic interpolations, see \eqref{eq-55}. This is really a lazy gas experiment, while in some sense, the above mentioned lazy gas experiment  in \cite{Vill09} is a \emph{very} lazy gas experiment. Indeed, in the displacement interpolation setting, not only the particles need to find a cooperative lazy behaviour (the transport mapping $x\mapsto y$) but also each individual particle must find an economic way to travel (the minimizing geodesic path) as a result of its intrinsic laziness: it is very slow and at the limit $k\to \infty,$ it doesn't want to move at all, recall Statement \ref{res-15}. 

It is interesting to know that, without slowing down, along the entropic interpolation $[\mu_0,\mu_1],$
\begin{equation}\label{eq-57}
h _{[\mu_0,\mu_1]}''(t)=\ud \langle \Gamma_2(\varphi_t)+\Gamma_2(\psi_t),\mu_t\rangle,\quad 0\le t\le1,
\end{equation}
where the functions $\varphi$ and $\psi$ are given at \eqref{eq-56}: $\varphi_t(z)=\log E_R(f_0(X_0)\mid X_t=z)$, $\psi_t(z)=\log E_R(g_1(X_1)\mid X_t=z)$ with $f_0, g_1$ such that $\mu_t=(f_0(X_0)g_1(X_1)\,R)_t$. This is proved by the author in \cite{Leo12d}. Remark that \eqref{eq-57} is an extension of \eqref{eq-58} where  $\psi=-\varphi.$ Unlike this asymptotic case $(k=\infty)$ where the convexity estimate \eqref{eq-58} is obtained informally by means of Otto's heuristic calculus, the entropic interpolation formula \eqref{eq-57} is rigorous since $h _{[\mu_0,\mu_1]}$ is  genuinely second differentiable on $(0,1)$ and the stochastic calculus result \eqref{eq-19} is rigorous.

In the general setting of a reference reversible measure $R$, and in particular with a reversible random walk  on a graph, it is shown in \cite{Leo12d} that stochastic calculus for the $(f,g)$-transforms as developed at Statement \ref{res-10} leads us to the following rigorous formula
\begin{equation}\label{eq-59}
h _{[\mu_0,\mu_1]}''(t)=\ud \langle \Theta_2(\varphi_t)+\Theta_2(\psi_t),\mu_t\rangle,\quad 0< t<1,
\end{equation}
where
\begin{equation}\label{eqd-10}
\Theta _{2} \psi:=L \Theta \psi
	+e ^{-\psi}\Gamma\left(e^\psi,\Theta \psi\right)
	+ e ^{-\psi}\Gamma(e^\psi,\psi)B \psi -e ^{-\psi}\Gamma(e^\psi B \psi,\psi)
\end{equation}
with
\begin{equation*}
\Theta \psi:= e ^{-\psi}\Gamma(e ^{\psi},\psi)-B \psi+L\psi.
\end{equation*}
In the special case where $L$ is a diffusion operator, then $\Theta=\Gamma$ and $\Theta_2=\Gamma_2.$ One may expect that formulae \eqref{eq-59} and \eqref{eqd-10} could  lead to some results about the curvature of graphs, in the same spirit as \eqref{eq-57}, which is related to curvature via Bochner's formula, carries information about the curvature of the underlying Riemannian manifold.

\section{A short history of Schr\"odinger's problem and related literature}

Schr\"odinger's problem was first addressed by E. Schr\"odinger in a German written article  \cite{Sch31} which was  published in 1931 and entitled 
\selectlanguage{german} 
\emph{``{\"U}ber die {U}mkehrung der {N}aturgesetze''}
\selectlanguage{english}
\footnote{On the reversibility of the laws of nature.}, then in  a French written article \cite{Sch32} which was published in 1932 and entitled \emph{``Sur la théorie relativiste de l'électron et l'interprétation de la mécanique quantique"}\footnote{On the relativistic theory of the electron and the interpretation of quantum mechanics.}. The entropy minimization problem appears at the last section VII of the 1932 article which can be read independently of the preceding sections and is entitled: \emph{``Une analogie entre la mécanique ondulatoire et quelques problèmes de probabilités en physique classique"}\footnote{An analogy between wave mechanics and some probabilistic problems in classical physics.}. 

Let us quote Schr\"odinger's introduction to this section.  

\selectlanguage{french}
\emph{
Il s'agit d'un problème classique: problème de probabilités dans la théorie du mouvement brownien. Mais en fin de compte, il \emph{ressortira} une analogie avec la mécanique ondulatoire, qui fut si frappante pour moi lorsque je l'eus trouvée, qu'il m'est difficile de la croire purement accidentelle.
\\
\`A titre d'introduction, je voudrais citer une remarque que j'ai trouvée dans les ``Glifford lectures'' de A.S. Eddington (Cambridge, 1928, p.\,216 et sqq). Eddington, en parlant de l'interprétation de la mécanique ondulatoire, fait dans une note en bas de page la remarque suivante:
\selectlanguage{english}
\footnote{This is a classical problem: a probability problem in the theory of Brownian motion. But eventually an analogy with the wave mechanics will appear. This analogy stroke me so hard once I discovered it, that it is difficult for me to believe that it is purely accidental.
\\
As an introduction, let me quote a remark that I found in the ``Glifford lectures''  of A.~S.~Eddington (Cambridge, 1928, p.\,216 et sqq). Discussing the interpretation of wave mechanics, Eddington writes in a footnote the following remark: ``The whole interpretation is very obscure, \dots '' 
}
\\
``The whole interpretation is very obscure, but it seems to depend on wether you are considering the probability \emph{after you know what has happened} or the probability for the purposes of prediction. The $\psi \bar \psi$ is obtained by introducing two symmetrical systems of $\psi$ waves travelling in opposite directions in time; one  of these must presumably correspond to probable inference from what is known (or is stated) to have been the condition at a later time.'' 
}

In 1931, wave mechanics is newly born and many physicists are puzzled by its possible interpretations. Based on Eddington's remark, one may wonder at first sight if in the quantum world knowledge from the far future is available. Of course, this is not so, but why? In his 1931-32 papers, Schr\"odinger solves this paradox by providing an amazingly close analogue of the quantum wave function propagation in the classical world, by means of the entropy minimization problem \eqref{Sdyn}. In particular,   formula \eqref{eq-16} in Theorem \ref{res-11}: $P_t(dx)=f_t(x)g_t(x)\,m(dx),$ must be interpreted as the classical analogue of Born's formula: $P_t(dx)=\psi_t(x)\bar \psi_t(x)\,dx.$  Let us quote \cite{Sch32} again (this quotation also appears  in \cite{Foe85}) to emphasize that, although derived in a heuristic manner in \cite{Sch31,Sch32}, the system \eqref{eq-13} and Born's formula \eqref{eq-16} are motivated by the following question of large deviations in the framework of the lazy gas experiment:
\selectlanguage{french}
\emph{
Imaginez que vous observez un système de particules en diffusion, qui soient en équilibre thermodynamique. Admettons qu'à un instant donné $t_0$ vous les ayez trouvées en répartition à peu près uniforme et qu'à $t_1>t_0$ vous ayez trouvé un écart spontané et \emph{considérable} par rapport à cette uniformité. On vous demande de quelle manière cet écart s'est produit. Quelle en est la manière la plus probable\,? 
\selectlanguage{english}
\footnote{Imagine that you observe a system of diffusing particles which is in thermal equilibrium. Suppose that at a given time $t_0$ you see that their repartition is almost uniform and that at $t_1>t_0$ you find  a spontaneous and \emph{significant} deviation from this uniformity. You are asked to explain how  this deviation occurred. What is its most likely behaviour?}
}

As a concluding comment in his 1932 article, Schr\"odinger writes:
\selectlanguage{french}
\emph{
La fonction [d'onde] complexe $\psi$ correspond à \emph{deux} fonctions réelles, de sorte qu'il suffit de définir les conditions aux limites en se donnant la valeur de $\psi$ à \emph{un seul} instant déterminé; c'est la façon de voir généralement admise en mécanique quantique. Est-elle la seule admissible? Dans notre problème, cela reviendrait à regarder comme données les valeurs de $f$ et $g$\footnote{With the notation of the present article.} à un instant déterminé (au lieu des valeurs de leur produit à deux instants différents), chose inadmissible et absolument dénuée de sens.
\\
Doit-on interpréter la remarque d'Eddington, citée plus haut, comme signalant la nécessité de modifier cette manière de voir en mécanique ondulatoire et prendre comme conditions aux limites les valeurs d'une seule probabilité réelle à deux instants différents?
\selectlanguage{english} 
\footnote{
The complex [wave] function $\psi$ corresponds to \emph{two} real functions. Therefore, it is enough to define the limit conditions by prescribing the value of $\psi$ at a \emph{unique} given time. This is the regular practice in quantum mechanics. Is it the only admissible one? In our problem, this would correspond to considering that the values of $f$ and $g$ [with the notation of the present article] are prescribed at a given time (instead of the values of their product at two distinct times). This is inadmissible and meaningless.
\\
Should one interpret the previously quoted remark of Eddington, as a hint for the necessity of modifying our usual way of looking at quantum mechanics by defining the limit conditions in terms of the values of a single real probability at two distinct times?
}
}

This has been performed in 1942 by R.~Feynman in his PhD thesis \cite{Fey42}, without knowing Schr\"odinger's contribution. Feynman's thesis is entitled: \emph{The principle of least action in quantum mechanics}. Based on a seminal article by Dirac \cite{Dirac33}, entitled \emph{The Lagrangian in quantum mechanics} (also reproduced  in \cite{Fey42}), and in contrast with the regular Hamiltonian approach, Feynman's thesis proposes a Lagrangian approach to quantum mechanics which will be further  developed in several directions, see \cite{FH65}.  

\subsubsection*{F\"ollmer's contribution}

Although Schr\"odinger obtains  the classical Born formula \eqref{eq-16}, he does not write explicitly the problems \eqref{Sdyn} and \eqref{S}. Their explicit formulation is due to H.~F\"ollmer in his Saint-Flour lecture notes \cite[pp.\,154-167]{Foe85}. Proposition \ref{res-03} which is based on the additive  property  of the relative entropy \eqref{eq-10}, also appears in  \cite{Foe85}.

\subsection*{Early mathematical developments}

Although this part of Schr\"odinger's work has been forgotten for some decades, it had influenced leading mathematicians  soon after its publishing.

\subsubsection*{Reciprocal processes, 1932}

Very soon after Schr\"odinger's 1931 article, S.~Bernstein published in 1932 an article \cite{Bern32} about the general problem of deriving limit theorems for sequences of dependent random variables. Among other notions, he explored the Markov property and, motivated by \cite{Sch31},  proposed a type of  time-correlation which is less restrictive than the Markov property and is still symmetric with respect to time reversal\footnote{It is not clear that Bernstein was aware of the time-symmetry of the Markov property. This symmetry has clearly been identified twenty years later by J.L.~Doob in his textbook \cite{Doob53}.}. He suggested that such stochastic processes could be called reciprocal process. While a Markov  measure $Q\in\MO$ satisfies for any $0\le s\le t\le1,$
$$Q(X _{[0,s]}\in\cdot, X _{[t,1]}\in\cdot\cdot\mid X _{[s,t]})
	=Q(X _{[0,s]}\in\cdot\mid X _s)Q(X _{[t,1]}\in\cdot\cdot\mid X_t),$$
a path measure $Q\in\MO$ is reciprocal if for any $0\le s\le t\le1,$
$$
Q(X _{[s,t]}\in\cdot\mid X _{[0,s]}, X _{[t,1]})=Q(X _{[s,t]}\in\cdot\mid X _s, X _t).
$$
Any Markov measure is reciprocal, but the converse is false. 
\\
The theory of reciprocal processes has been forgotten for a while after Bernstein's article and was eventually developed by B. Jamison  in 1974, \cite{Jam74,Jam75}. A significant contribution of Jamison to the theory of Schr\"odinger problem was that its solution $\Ph$ is not only reciprocal, but also Markov and it is indeed an $h$-transform of the Markov reference process. This is performed without any entropy, but solely by means of reciprocal transitions. F\"ollmer recovered these results in \cite{Foe85} using the entropy minimization problem \eqref{sdyn}. For more information about the relations between reciprocal and Markov measures, see \cite{LRZ12}.

\subsubsection*{Time-reversal, 1936}

In the very first lines of his celebrated paper \cite{Kol36} about Markov processes and time-reversal, A.~Kolmogorov quotes Schr\"odinger's 1931 paper as a motivation. This has been surprisingly forgotten afterwards.

\subsubsection*{Schr\"odinger system, 1940}

Schr\"odinger had left open the problem of finding criteria for the system \eqref{eq-13} to have a solution $(f_0,g_1).$ In 1940, R.~Fortet \cite{Fort40} proposed a partial solution and in 1960, A.~Beurling \cite{Beu60} gave a solution close to the statement of Theorem \ref{res-08}. Beurling's proof also relies upon an entropy argument. Beurling's result was  improved by Jamison in \cite{Jam75} who obtained the complete solution of Schr\"odinger's system.

\subsection*{Stochastic deformations of  mechanics} The aim of Euclidean quantum mechanics (EQM), which is mainly developed by J.-C.~Zambrini since 1986 \cite{Zam86,CZ91,CWZ,CZ08}, is to transfer by analogy, known results from quantum mechanics to the theory of stochastic processes and the other way round\footnote{Unlike Nelson's stochastic mechanics  \cite{Nel85} or Nagasawa's interpretation of quantum mechanics \cite{Naga00}, EQM is not aimed at giving a stochastic interpretation of quantum mechanics. Such a project still remains an open problem eighty years after the advent of this theory.}.  The starting paper \cite{Zam86} of this program relies on Schr\"odinger's  discovery and adapts Jamison's results for an appropriate class of reciprocal processes (unlike Zambrini, Jamison doesn't use the time-reversed filtration in his construction of reciprocal processes). Then the EQM program was extended to the derivation of rigorous results about various kind of  stochastic processes which are suggested by the textbook \emph{Quantum mechanics and path integrals} by Feynman and Hibbs  \cite{FH65}. This textbook presents, indeed, a time-symmetric (Lagrangian) approach to quantum mechanics  which extends Feynman's early works and in particular his PhD thesis \cite{Fey42}.

Feynman's approach is an enlightening, efficient and intuitive guideline for physicists, but unfortunately it is impossible  to put it on a rigorous mathematical ground: it is proved that Feynman's integral is an oddly defined object.
However, replacing Feynman's integration by stochastic calculus suggests interesting results about diffusion processes. The first of these results  was the celebrated Feynman-Kac's formula \cite{Kac49}. EQM viewpoint, however, is that there is much more in Feynman's method than this time-asymmetric measure theoretic perturbative formula. EQM uses Kac's strategy in a systematic manner and its basic program is to obtain rigorous stochastic analogues of several intuitive  statements from \cite{FH65}; intuitive, but highly efficient since they are corroborated by experiments. In EQM, the natural stochastic processes to work with are reciprocal processes. However, in several important situations, it appears that the critical (solving some variational problem) reciprocal processes are Markov. In this case, it is sufficient to work with $(f,g)$-transforms of Markov reference processes (see \cite{Jam75, Foe85} for an $h$-transform representation) and their extensions: $f_0(X_0)\exp \big(\int _0^1 U(X_t)\,dt\big) g_1(X_1)\, R$ with the additional Feynman-Kac integral term $(x,y)\mapsto\exp \big(\int _0^1  U(X_t)\,dt\big)\,R ^{xy}$ which is the classical analogue of Feynman's propagator. These extensions of  $h$-transforms are also used by M. Nagasawa in \cite{Naga89,Naga00} who also explores connections between  stochastic processes and quantum physics which are highly inspired by the Schr\"odinger problem.

It is also possible to stochastically deform all the mathematical tools of classical mechanics to derive new results about diffusion processes. For instance,  M.~Thieullen designed a second order calculus for reciprocal processes in \cite{Th93} and  without referring to \eqref{sdyn} or entropy in general, M.~Thieullen and J.-C.~Zambrini have obtained a stochastic deformation of Noether theorem  
\cite{TZ97a}.

\subsubsection*{An interesting problem}
This suggests that it would be also interesting to derive a type of Noether theorem for the Monge-Kantorovich dynamical problem. Let us give some hint of what is meant. In the Euclidean case, the displacement interpolation $[\mu_0,\mu_1]$ is a  solution of \eqref{MKdyn} with $C=C_{\textrm{kin}}:=\Iii |\dot X_t|^2/2\,dt$, the  kinetic action. It has a constant speed; this means that twice the average kinetic energy $\IO |\dot X_t|^2\,d\Ph=\IX |\nabla \psi_t(x)|^2\,\mu_t(dx),$ with the notation of the Benamou-Brenier formula \eqref{eq-38}, doesn't depend on time $t$. What happens when considering, instead of $C_{\textrm{kin}}$,  the  action functional $C=\Iii \big(|\dot X_t|^2/2+U(X_t)\big)\,dt$ which should be connected with some Newton equation?  What are the quantities that are conserved along the minimizer $[\mu_0,\mu_1],$ in terms of the symmetries of the potential $U?$

\subsection*{Stochastic optimal control}

Optimal transport can be  deformed into a stochastic optimal control problem. This is mainly the contribution of T.~Mikami, see \cite{Mika09} for an overview of this approach and some of its main developments. With \eqref{eq-46xx}, one obtains that the Brownian  Schr\"odinger problem \eqref{sdyn}, i.e.\ taking $R$ to be the reversible Brownian motion, is also expressed as follows:
\begin{equation}\label{eq-49}
E_{P^u} \int_0^1 L(u_t)\, dt \to \textrm{min};\quad u\in \mathcal{A}: P^u_0=\mu_0, P^u_1=\mu_1
\end{equation}
where $\mathcal{A}$ is the set (of admissible controls) which consists of all the $\Rn$-valued progressively measurable processes $u$ and $P^u$ (if it exists) is the law of the semi-martingale   
\begin{equation*}
X^u_t=X^u_0+\int_0^t u_s\,ds+W_t,\quad 0\le t\le 1
\end{equation*}
	 where $W$ is a standard Brownian motion starting from 0 and 
\begin{equation*}
L(u)=|u|^2/2,\quad u\in\Rn.
\end{equation*}
This is a stochastic version of the quadratic Monge-Kantorovich problem \eqref{mkdyn}:
\begin{equation*}
E_{P^u} \int_0^1 L(u_t)\, dt \to \textrm{min};\quad u\in \mathcal{A}_{\textrm{MK}}: P^u_0=\mu_0, P^u_1=\mu_1
\end{equation*} 
which is obtained by replacing $\mathcal{A}$ with  $\mathcal{A}_{\textrm{MK}},$ the set of all controls $u\in L^1 _{\Rn}(\ii)$ and taking $P^u$ to be the law of 
$$
X^u_t=X^u_0+\int_0 ^t u_s\,ds,\quad 0\le t\le 1,
$$ 
a process with a random initial position and a deterministic evolution. 
\\
This theory extends naturally to the case where $L$ is strictly convex, regular and coercive enough: $\lim _{|u|\to \infty}L(u)/|u|^p=\infty,$ for some $p>1. $ But results close to optimal transport are obtained with $L$ admitting a quadratic growth, i.e.\ in harmony with the Brownian motion $W.$

When $L$ is quadratic, if the Brownian motion $W$ is replaced with $\sqrt{\epsilon}W$ and $\epsilon$ tends to zero, then Mikami  
shows in \cite{Mika04} that \eqref{sdyn} tends to \eqref{mkdyn}. Unfortunately, this type of convergence remains unclear unless $L$ is  quadratic, i.e.\ unless the stochastic optimal control problem corresponds to \eqref{sdyn}.

T.~Mikami and M.~Thieullen have proved a Kantorovich-type dual equality in  \cite{MT06} for \eqref{eq-49} and recovered related optimal transport results in \cite{MT08}. T. Mikami has intensively studied the connections between stochastic control and optimal transport. In particular,  soon after the discovery by Jordan, Kinderlehrer and Otto \cite{JKO98} of the relation between gradient flows, Wasserstein distance and dissipative evolution equations, he proposed in \cite{Mika00} a stochastic  approach to the JKO approximation scheme. In addition to the already cited articles by Mikami, several other  works by the same author are related to a probabilistic approach to optimal transport:
\cite{Mika02,Mika06,Mika08,Mika12}.
Let us also quote the early contributions of Mikami 
\cite{Mika90} and P.~Dai~Pra  \cite{DP91} where the Schr\"odinger problem is translated in terms of stochastic control.

\subsection*{Penalized Monge-Kantorovich problem}

The connection between the Monge-Kantorovich and the Schr\"odinger
problems is also exploited implicitly in some works where
\eqref{mk} is penalized by a relative entropy, leading to the
minimization problem
\begin{equation*}
      \IXX c\,d\pi +\frac1k H(\pi|\rho) \rightarrow \min;\quad \pi\in\PXX : \pi_0=\mu_0,
    \pi_1=\mu_1
\end{equation*}
where $\rho\in\PXX$ is a fixed reference probability measure on
$\XXX,$ for instance $\rho=\mu_0\otimes\mu_1.$ Putting
$\rho^k(dxdy)=Z_k^{-1}e^{-kc(x,y)}\,\rho(dxdy)$ with $Z_k=\IXX
e^{-kc}\,d\rho<\infty,$ up to the additive constant $\log(Z_k)/k,$
this minimization problem rewrites as \eqref{sk} with $\rho^k$ instead of $R^k _{01}$. See for
instance the papers by R\"uschendorf and Thomsen \cite{RT93,RT98}
and the references therein. Also interesting are the  papers
by Dupuy, Galichon and Salanie \cite{GS10, DG12} with an applied point of view.

\appendix

\section{Relative entropy with respect to an unbounded measure}\label{sec-entropy}

This appendix section is a short version of \cite[\S\,2]{Leo12b} which we refer to for more details.
Let $r$ be some $\sigma$-finite positive measure on some  space $Y$. The relative entropy of the probability measure $p$ with respect to $r$ is loosely defined by
\begin{equation}\label{eq-01app}
H(p|r):=\int_Y \log(dp/dr)\, dp\in (-\infty,\infty],\qquad p\in \PY
\end{equation}
if $p\ll r$ and $H(p|r)=\infty$ otherwise. 
\\
More precisely, when $r$ is a probability measure,  we have $$H(p|r)=\int_Y h(dp/dr)\,dr\in[0,\infty],\qquad p,r\in\PY$$ with $h(a)=a\log a-a+1\ge 0$ for all $a\ge0,$ (take $h(0)=1).$ Hence,  the definition \eqref{eq-01app} is meaningful. It follows from the strict convexity of $h$ that $H(\cdot|r)$ is also strictly convex. In addition, since $h(a)=\inf h=0 \iff a=1,$ we also have for any $ p\in \PY,$
\begin{equation}\label{eq-06}
H( p|r)=\inf H(\cdot|r)=0\iff  p=r.
\end{equation}
If $r$ is unbounded, one must restrict the definition of $H(\cdot|r)$ to some subset of $\PY$ as follows. As $r$ is assumed to be $\sigma$-finite, there exist  measurable functions $W:Y\to [1,\infty)$ such that
\begin{equation}\label{eq-02}
z_W:=\int_Y e ^{-W}\, dr<\infty.
\end{equation}
Define the probability measure $r_W:= z_W ^{-1}e ^{-W}\,r$ so that $\log(dp/dr)=\log(dp/dr_W)-W-\log z_W.$ It follows that for any $p\in \PY$ satisfying $\int_Y W\, dp<\infty,$ the formula 
\begin{equation}\label{eq-03}
H(p|r):=H(p|r_W)-\int_Y W\,dp-\log z_W\in (-\infty,\infty]
\end{equation}
is a meaningful definition of the relative entropy which is coherent in the following sense. If $\int_Y W'\,dp<\infty$ for another measurable function $W':Y\to[0,\infty)$ such that $z_{W'}<\infty,$ then $H(p|r_W)-\int_Y W\,dp-\log z_W=H(p|r _{W'})-\int_Y W'\,dp-\log z_{W'}\in (-\infty,\infty]$.
\\
Therefore, $H(p|r)$ is well-defined for any $p\in \PY$ such that $\int_Y W\,dp<\infty$ for some measurable nonnegative function $W$ verifying \eqref{eq-02}. It can be proved that
\begin{eqnarray}
H(p|r)&\overset{(\textrm{i})}=&\sup \left\{\int_Y u\,dp-\log\int_Y e^u\,dr; u:Y\to[-\infty,\infty), \int_Y e^u\,dr<\infty\right\} \label{eq-07} \\
&\overset{(\textrm{ii})}=& \sup \left\{\int_Y u\,dp-\log\int_Y e^u\,dr; u\in C_W(Y)\right\},\label{eq-07b}
\end{eqnarray}
where 
\begin{enumerate}[(i)]
\item
identity (i) is valid when $p$ is assumed to be absolutely continuous with respect to $r;$
\item
identity (ii) is meaningful when $Y$ is a topological space equipped with its Borel $\sigma$-field since we have set $C_W(Y)$ to be the space of all continuous functions $u:Y\to\RR$ such that $\sup |u|/W<\infty$, where $W$ is any nonnegative function satisfying \eqref{eq-02}. In this case, it follows that, being the supremum of affine continuous functions, $H(\cdot|r)$ is a convex lower semi-continuous function with respect to the weak topology $\sigma(\{p\in\PY;\int_Y W\,dp<\infty\},C_W(Y))$. 
\end{enumerate}
Clearly, identity (i) entails that $H(p|r)=\infty$ whenever $p\in \PY$ is such that $\int_Y W\, dp =\infty$ 
\\
It follows from the strict convexity of $H(\cdot|r_W)$ and \eqref{eq-03} that $H(\cdot|r)$ is also strictly convex.

Let $Y$ and $Z$ be two  Polish spaces equipped with their Borel $\sigma$-fields. For any measurable function $\phi:Y\to Z$ and any measure $q\in \mathrm{M}_+(Y)$ we have the  disintegration formula
\begin{equation}\label{eq-09}
q(dy)=\int _{Z} q(dy|\phi=z)\, \phi\pf q(dz)
\end{equation}
where $z\in Z\mapsto q(\cdot|\phi=z)\in \mathrm{P}(Y)$ is measurable, and the following additive   property
\begin{equation}\label{eq-10}
H(p|r)=H(\phi\pf p|\phi\pf r)+\int _{Z} H\Big(p(\cdot\mid \phi=z)|r(\cdot\mid\phi=z)\Big)\,\phi\pf p(dz),
\end{equation}
 is valid for any $p\in \PY$ and any $\sigma$-finite $r\in \mathrm{M}_+(Y).$
In particular, as $r(\cdot\mid\phi=z)$ is a probability measure for each $z$, with \eqref{eq-06} we see that
\begin{equation}\label{eq-08}
H(\phi\pf p|\phi\pf r)\le H(p|r),\quad \forall p\in\PY
\end{equation}
with equality if and only if 
\begin{equation}\label{eq-11}
p(\cdot\mid \phi=z)=r(\cdot\mid\phi=z),\quad \forall z, \phi\pf p\as
\end{equation}

\selectlanguage{english} 

\end{document}